\newtheorem{thm}{Theorem} [section]
\newtheorem{lemma}[thm]{Lemma}
\newtheorem{corollary}[thm]{Corollary}
\newtheorem{prop}[thm]{Proposition}
\newtheorem{setting}[thm]{Setting}
\theoremstyle{definition}
\newtheorem{defn}[thm]{Definition}
\newtheorem{example}[thm]{Example}
\theoremstyle{remark}
\newtheorem{remark}[thm]{Remark}
\begin{document}

\numberwithin{equation}{section}

\newcommand{\hs}{\mbox{\hspace{.4em}}}
\newcommand{\ds}{\displaystyle}
\newcommand{\bd}{\begin{displaymath}}
\newcommand{\ed}{\end{displaymath}}
\newcommand{\bcd}{\begin{CD}}
\newcommand{\ecd}{\end{CD}}

\newcommand{\on}{\operatorname}
\newcommand{\proj}{\operatorname{Proj}}
\newcommand{\bproj}{\underline{\operatorname{Proj}}}

\newcommand{\spec}{\operatorname{Spec}}
\newcommand{\Spec}{\operatorname{Spec}}
\newcommand{\bspec}{\underline{\operatorname{Spec}}}
\newcommand{\pline}{{\mathbf P} ^1}
\newcommand{\aline}{{\mathbf A} ^1}
\newcommand{\pplane}{{\mathbf P}^2}
\newcommand{\aplane}{{\mathbf A}^2}
\newcommand{\coker}{{\operatorname{coker}}}
\newcommand{\ldb}{[[}
\newcommand{\rdb}{]]}

\newcommand{\Sym}{\operatorname{Sym}^{\bullet}}
\newcommand{\Symp}{\operatorname{Sym}}
\newcommand{\Pic}{\bf{Pic}}
\newcommand{\Aut}{\operatorname{Aut}}
\newcommand{\PAut}{\operatorname{PAut}}

\newcommand{\too}{\twoheadrightarrow}
\newcommand{\C}{{\mathbf C}}
\newcommand{\Z}{{\mathbf Z}}
\newcommand{\Q}{{\mathbf Q}}
\newcommand{\Cx}{{\mathbf C}^{\times}}
\newcommand{\Cbar}{\overline{\C}}
\newcommand{\Cxbar}{\overline{\Cx}}
\newcommand{\cA}{{\mathcal A}}
\newcommand{\cS}{{\mathcal S}}
\newcommand{\cZ}{{\mathcal Z}}
\newcommand{\cV}{{\mathcal V}}
\newcommand{\cM}{{\mathcal M}}
\newcommand{\bA}{{\mathbf A}}
\newcommand{\cB}{{\mathcal B}}
\newcommand{\cC}{{\mathcal C}}
\newcommand{\cD}{{\mathcal D}}
\newcommand{\D}{{\mathcal D}}
\newcommand{\cs}{{\mathbf C} ^*}
\newcommand{\boldc}{{\mathbf C}}
\newcommand{\cE}{{\mathcal E}}
\newcommand{\cF}{{\mathcal F}}
\newcommand{\bF}{{\mathbf F}}
\newcommand{\cG}{{\mathcal G}}
\newcommand{\G}{{\mathbb G}}
\newcommand{\cH}{{\mathcal H}}
\newcommand{\CI}{{\mathcal I}}
\newcommand{\cJ}{{\mathcal J}}
\newcommand{\cK}{{\mathcal K}}
\newcommand{\cL}{{\mathcal L}}
\newcommand{\baL}{{\overline{\mathcal L}}}

\newcommand{\Mf}{{\mathfrak M}}
\newcommand{\bM}{{\mathbf M}}
\newcommand{\bm}{{\mathbf m}}
\newcommand{\cN}{{\mathcal N}}
\newcommand{\theo}{\mathcal{O}}
\newcommand{\cP}{{\mathcal P}}
\newcommand{\cR}{{\mathcal R}}
\newcommand{\Pp}{{\mathbb P}}
\newcommand{\boldp}{{\mathbf P}}
\newcommand{\boldq}{{\mathbf Q}}
\newcommand{\bbL}{{\mathbf L}}
\newcommand{\cQ}{{\mathcal Q}}
\newcommand{\cO}{{\mathcal O}}
\newcommand{\Oo}{{\mathcal O}}
\newcommand{\cY}{{\mathcal Y}}
\newcommand{\OX}{{\Oo_X}}
\newcommand{\OY}{{\Oo_Y}}
\newcommand{\otY}{{\underset{\OY}{\ot}}}
\newcommand{\otX}{{\underset{\OX}{\ot}}}
\newcommand{\cU}{{\mathcal U}}\newcommand{\cX}{{\mathcal X}}
\newcommand{\cW}{{\mathcal W}}
\newcommand{\boldz}{{\mathbf Z}}
\newcommand{\qgr}{\operatorname{q-gr}}
\newcommand{\gr}{\operatorname{gr}}
\newcommand{\rk}{\operatorname{rk}}
\newcommand{\Sh}{\operatorname{Sh}}
\newcommand{\SH}{{\underline{\operatorname{Sh}}}}
\newcommand{\End}{\operatorname{End}}
\newcommand{\uEnd}{\underline{\operatorname{End}}}
\newcommand{\Hom}{\operatorname{Hom}}
\newcommand{\uHom}{\underline{\operatorname{Hom}}}
\newcommand{\uHomY}{\uHom_{\OY}}
\newcommand{\uHomX}{\uHom_{\OX}}
\newcommand{\Ext}{\operatorname{Ext}}
\newcommand{\bExt}{\operatorname{\bf{Ext}}}
\newcommand{\Tor}{\operatorname{Tor}}

\newcommand{\inv}{^{-1}}
\newcommand{\airtilde}{\widetilde{\hspace{.5em}}}
\newcommand{\airhat}{\widehat{\hspace{.5em}}}
\newcommand{\nt}{^{\circ}}
\newcommand{\del}{\partial}

\newcommand{\supp}{\operatorname{supp}}
\newcommand{\GK}{\operatorname{GK-dim}}
\newcommand{\hd}{\operatorname{hd}}
\newcommand{\id}{\operatorname{id}}
\newcommand{\res}{\operatorname{res}}
\newcommand{\lrar}{\leadsto}
\newcommand{\im}{\operatorname{Im}}
\newcommand{\HH}{\operatorname{H}}
\newcommand{\TF}{\operatorname{TF}}
\newcommand{\Bun}{\operatorname{Bun}}

\newcommand{\F}{\mathcal{F}}
\newcommand{\Ff}{\mathbb{F}}
\newcommand{\nthord}{^{(n)}}
\newcommand{\Gr}{{\mathfrak{Gr}}}

\newcommand{\Fr}{\operatorname{Fr}}
\newcommand{\GL}{\operatorname{GL}}
\newcommand{\gl}{\mathfrak{gl}}
\newcommand{\SL}{\operatorname{SL}}
\newcommand{\ff}{\footnote}
\newcommand{\ot}{\otimes}
\def\Ext{\operatorname {Ext}}
\def\Hom{\operatorname {Hom}}
\def\Ind{\operatorname {Ind}}
\def\bbZ{{\mathbb Z}}

\newcommand{\nc}{\newcommand}
\nc{\ol}{\overline} \nc{\cont}{\on{cont}} \nc{\rmod}{\on{mod}}
\nc{\Mtil}{\widetilde{M}} \nc{\wb}{\overline} \nc{\wt}{\widetilde}
\nc{\wh}{\widehat} \nc{\sm}{\setminus} \nc{\mc}{\mathcal}
\nc{\mbb}{\mathbb}  \nc{\K}{{\mc K}} \nc{\Kx}{{\mc K}^{\times}}
\nc{\Ox}{{\mc O}^{\times}} \nc{\unit}{{\bf \on{unit}}}
\nc{\boxt}{\boxtimes} \nc{\xarr}{\stackrel{\rightarrow}{x}}

\nc{\Ga}{\G_a}
 \nc{\PGL}{{\on{PGL}}}
 \nc{\PU}{{\on{PU}}}

\nc{\h}{{\mathfrak h}} \nc{\kk}{{\mathfrak k}}
 \nc{\Gm}{\G_m}
\nc{\Gabar}{\wb{\G}_a} \nc{\Gmbar}{\wb{\G}_m} \nc{\Gv}{G^\vee}
\nc{\Tv}{T^\vee} \nc{\Bv}{B^\vee} \nc{\g}{{\mathfrak g}}
\nc{\gv}{{\mathfrak g}^\vee} \nc{\BRGv}{\on{Rep}\Gv}
\nc{\BRTv}{\on{Rep}T^\vee}
 \nc{\Flv}{{\mathcal B}^\vee}
 \nc{\TFlv}{T^*\Flv}
 \nc{\Fl}{{\mathfrak Fl}}
\nc{\BRR}{{\mathcal R}} \nc{\Nv}{{\mathcal{N}}^\vee}
\nc{\St}{{\mathcal St}} \nc{\ST}{{\underline{\mathcal St}}}
\nc{\Hec}{{\bf{\mathcal H}}} \nc{\Hecblock}{{\bf{\mathcal
H_{\alpha,\beta}}}} \nc{\dualHec}{{\bf{\mathcal H^\vee}}}
\nc{\dualHecblock}{{\bf{\mathcal H^\vee_{\alpha,\beta}}}}
\newcommand{\ramBun}{{\bf{Bun}}}
\newcommand{\ramBuno}{\ramBun^{\circ}}

\nc{\Buntheta}{{\bf Bun}_{\theta}} \nc{\Bunthetao}{{\bf
Bun}_{\theta}^{\circ}} \nc{\BunGR}{{\bf Bun}_{G_\BR}}
\nc{\BunGRo}{{\bf Bun}_{G_\BR}^{\circ}}
\nc{\HC}{{\mathcal{HC}}}
\nc{\risom}{\stackrel{\sim}{\to}} \nc{\Hv}{{H^\vee}}
\nc{\bS}{{\mathbf S}}
\def\BRep{\operatorname {Rep}}
\def\Conn{\operatorname {Conn}}

\nc{\Vect}{{\operatorname{Vect}}}
\nc{\Hecke}{{\operatorname{Hecke}}}

\newcommand{\ZZ}{{Z_{\bullet}}}
\nc{\HZ}{{\mc H}\ZZ} \nc{\eps}{\epsilon}

\nc{\CN}{\mathcal N} \nc{\BA}{\mathbb A}

\nc{\ul}{\underline}

\nc{\bn}{\mathbf n} \nc{\Sets}{{\on{Sets}}} \nc{\Top}{{\on{Top}}}
\nc{\IntHom}{{\mathcal Hom}}

\nc{\Simp}{{\mathbf \Delta}} \nc{\Simpop}{{\mathbf\Delta^\circ}}

\nc{\Cyc}{{\mathbf \Lambda}} \nc{\Cycop}{{\mathbf\Lambda^\circ}}

\nc{\Mon}{{\mathbf \Lambda^{mon}}}
\nc{\Monop}{{(\mathbf\Lambda^{mon})\circ}}

\nc{\Aff}{{\on{Aff}}} \nc{\Sch}{{\on{Sch}}}

\nc{\bul}{\bullet}
\nc{\module}{{\operatorname{-mod}}}

\nc{\dstack}{{\mathcal D}}

\nc{\BL}{{\mathbb L}}

\nc{\BD}{{\mathbb D}}

\nc{\BR}{{\mathbb R}}

\nc{\BT}{{\mathbb T}}

\nc{\SCA}{{\mc{SCA}}}
\nc{\DGA}{{\mc DGA}}

\nc{\DSt}{{DSt}}

\nc{\lotimes}{{\otimes}^{\mathbf L}}

\nc{\bs}{\backslash}

\nc{\Lhat}{\widehat{\mc L}}

\newcommand{\Coh}{{\on{Coh}}}

\nc{\QCoh}{QC}
\nc{\QC}{QC}
\nc{\Perf}{\rm{Perf}}
\nc{\Cat}{{\on{Cat}}}
\nc{\dgCat}{{\on{dgCat}}}
\nc{\bLa}{{\mathbf \Lambda}}

\nc{\BRHom}{\mathbf{R}\hspace{-0.15em}\on{Hom}}
\nc{\BREnd}{\mathbf{R}\hspace{-0.15em}\on{End}}
\nc{\colim}{\on{colim}}
\nc{\oo}{\infty}
\nc{\Mod}{{\on{Mod}}}

\nc\fh{\mathfrak h}
\nc\al{\alpha}
\nc\la{\alpha}
\nc\BGB{B\bs G/B}
\nc\QCb{QC^\flat}
\nc\qc{\on{QC}}
\nc\Ups{\Upsilon}

\nc{\fg}{\mathfrak g}

\nc{\Map}{\on{Map}} 

\nc{\fX}{\mathfrak X}

\nc{\XYX}{X\times_Y X}

\nc{\ch}{\check}
\nc{\fb}{\mathfrak b} \nc{\fu}{\mathfrak u} \nc{\st}{{st}}
\nc{\fU}{\mathfrak U}
\nc{\fZ}{\mathfrak Z}

\nc\fk{\mathfrak k} \nc\fp{\mathfrak p}

\nc{\BRP}{\mathbf{RP}} \nc{\rigid}{\text{rigid}}
\nc{\glob}{\text{glob}}

\nc{\cI}{\mathcal I}

\nc{\La}{\mathcal L}

\nc{\quot}{/\hspace{-.25em}/}

\nc\aff{\it{aff}}
\nc\BS{\mathbb S}

\nc\Loc{{\mc Loc}}
\nc\Tr{{\on{Tr}}}
\nc\Ch{{\mc Ch}}

\nc\ftr{{\mathfrak {tr}}}
\nc\fM{\mathfrak M}

\nc\Id{\operatorname{Id}}

\nc\bimod{\on{-bimod}}

\nc\ev{\operatorname{ev}}
\nc\coev{\operatorname{coev}}

\nc\pair{\operatorname{pair}}
\nc\kernel{\operatorname{kernel}}

\nc\Alg{\operatorname{Alg}}

\nc\init{\emptyset_{\text{\em init}}}
\nc\term{\emptyset_{\text{\em term}}}

\nc\Ev{\on{Ev}}
\nc\Coev{\on{Coev}}

\nc\es{\emptyset}
\nc\m{\text{\it min}}
\nc\M{\text{\it max}}
\nc\cross{\text{\it cr}}
\nc\tr{\on{tr}}
\nc\perf{\on{-perf}}
\nc\inthom{\mathcal Hom}
\nc\intend{\mathcal End}
\nc{\QSt}{{\mathcal QSt}}
\nc{\OO}{{\mathbb O}}

\nc{\uS}{\underline{\cS}}
\nc{\uCorr}{\underline{Corr}}
\nc{\udg}{\underline{dgCat}}

\title[Nonlinear Traces]{Nonlinear Traces}

\author{David Ben-Zvi}
\address{Department of Mathematics\\University of Texas\\Austin, TX 78712-0257}
\email{benzvi@math.utexas.edu}
\author{David Nadler}
\address{Department of Mathematics\\University
  of California\\Berkeley, CA 94720-3840}
\email{nadler@math.berkeley.edu}

\maketitle

\begin{abstract}
We combine the theory of traces in homotopical algebra with sheaf theory
in derived algebraic geometry to deduce general
fixed point and character formulas. The
formalism of dimension (or Hochschild homology) of a dualizable object
in the context of higher algebra provides a unifying
framework for classical notions such as Euler characteristics,
Chern characters, and characters of group representations. 
Moreover, the simple functoriality properties of dimensions clarify celebrated identities and extend them to new contexts.

We observe that it is advantageous to calculate dimensions, traces and their functoriality
directly in the nonlinear geometric setting of correspondence categories, where they are directly identified
with (derived versions of) loop spaces, fixed point loci and loop maps, respectively.
This results in universal nonlinear versions of Grothendieck-Riemann-Roch theorems, Atiyah-Bott-Lefschetz trace formulas, and 
Frobenius-Weyl character formulas. We can then linearize by applying sheaf theories, such as the theories of ind-coherent sheaves and $\D$-modules constructed by Gaitsgory-Rozenblyum~\cite{GR}. This recovers the familiar classical identities, in families and without any smoothness or transversality assumptions.
On the other hand, the formalism also applies to higher categorical settings not captured within a linear framework, 
such as characters of group actions on categories.
\end{abstract}

 \tableofcontents


\section{Introduction}
This paper is devoted to traces and characters in homotopical algebra
and their application to algebraic geometry and representation theory. We observe that 
many geometric fixed point and trace formulas can be expressed as linearizations
of fundamental nonlinear identities, describing dimensions and traces directly in the setting of 
correspondence categories of varieties or stacks. This gives a simple uniform perspective on (and useful
generalizations of) 
geometric character and fixed point formulas of Grothendieck-Riemann-Roch and Atiyah-Bott-Lefschetz type. In addition, 
one can also specialize the universal geometric formulas  to higher categorical settings not captured within a linear framework, 
such as characters of group actions on categories.

The paper is organized as follows: after a brief summary in Section~\ref{summary}, we give a detailed overview in Section~\ref{overview}, in three sections: first, 
the abstract functoriality of traces in higher category theory; second,
their calculation in correspondence categories in derived algebraic geometry; and third, their specialization via sheaf theories. The rest of the paper follows the same structure with more details provided.
We emphasize the formal nature and appealing simplicity of the constructions in any sufficiently derived setting. 
For example, in the second part, we work within derived algebraic geometry, but the statements and proofs should hold in any setting 
(for example, derived manifolds) with a suitable notion of fiber product to handle non-transversal intersections. 
The main objects appearing in trace formulas are the derived loop space 
(the self-intersection of the diagonal in its role as the nonlinear trace of the identity map)
and more general derived fixed point loci.
The importance of a derived setting also appears prominently in the third part, where the sheaf theories we apply must have good functorial properties with respect to fiber products. 
As a result, the theory of characters in Hochschild and cyclic homology
is expressed directly by the geometry, resulting in simpler formulations.
For example, the Todd genus in Grothendieck-Riemann-Roch and the denominators 
in the classical Atiyah-Bott formula arise naturally from derived calculations.
 
\subsection{Summary}\label{summary} We now describe the main theorem, extending classical trace and dimension formulas to a very general setting in derived algebraic geometry (including equivariance for arbitrary Lie algebroids or affine algebraic groups) without any smoothness or transversality assumptions, while
 emphasizing that the main contribution of the paper is the simple geometric formalism underlying these formulas.
For our general and formal nonlinear results, we need not assume anything about what classes of derived stacks and morphisms we work with. For applications, we need to be in a setting in which the powerful mechanism of sheaf theory is fully developed~\cite{indcoh,finiteness,GR}.

\begin{setting}\label{stack conventions} Throughout this paper, we work over a field $k$ of characteristic zero. A {\em stack} $X$ connotes either 
\begin{enumerate} 
\item a QCA derived stack in the sense of~\cite{finiteness}. In other words $X$ is quasicompact with affine diagonal and the underived inertia of $X$ is finite presentation over the underlying underived stack $X_{cl}$. 
\item an ind-inf-scheme, in the sense of~\cite{GR}. These include (derived) schemes of finite (or locally almost of finite) type and ind-schemes built out of unions of the former along closed embeddings, as well as their quotients by arbitrary Lie algebroids, or equivalently formal groupoids. 
\end{enumerate}
By a {\em proper} map we indicate a proper and schematic map, while for ind-proper indicates ind-proper and ind-inf-schematic.
All appearances of proper maps in this paper may be replaced by ind-proper ones. 
\end{setting}

Thus the class of spaces we consider includes all $k$-derived schemes of finite type and their quotients by either Lie algebroids or finite type affine group schemes. 

Given a derived stack $\pi_X:X\to \Spec k$, we denote by $\pi_{\cL X}:\cL X = \Map(S^1, X) \to \Spec k$ 
its derived loop space. In general, the derived loop space is a derived thickening of the inertia stack.
For a map $f:X\to Y$, we will  denote by 
$\cL f:\cL X\to \cL Y$ the induced map on loops.

\begin{example}\label{intro ex}
For many applications, the following two special cases are noteworthy.

When $X$ is a smooth scheme, $\cL X\simeq  \BT_X[-1] $ is the total space of the shifted tangent space by the HKR theorem. The same holds for an arbitrary scheme, if we replace the tangent space by the tangent {\em complex}, see for example~\cite{conns}.
For  
$f:X\to Y$ a map of schemes, $\cL f :\BT_X[-1]\to \BT_Y[-1]$ is (the shift of) the usual tangent map.

When $Y =BG$ is a classifying stack, $\cL Y\simeq G/G$ is the adjoint quotient. 
For $X$ a $G$-scheme, and $f:X/G\to BG$ the corresponding classifying map,
$\cL f:\cL (X/G)\to \cL BG \simeq G/G$ is the universal family of derived fixed point loci.
More precisely, for
any element $g\in G$,  the derived fixed point locus $X^g \subset X$ is precisely the derived fiber
$
X^g \simeq \cL(X/G) \times_{G/G} \{g\}
$
%
\end{example}


We will measure stacks $X$ by differential graded (dg) enhancements of derived categories of sheaves.

The most familiar is the assignment $X\mapsto\cQ(X)$ of the (unbounded) category of quasicoherent sheaves. However,
we will make essential use of Grothendieck-Serre duality, in the guise of an adjunction $(f_*,f^!)$ between push-forward and extraordinary pullback for proper maps $X$. This duality is most naturally expressed in the setting of {\em ind-coherent sheaves} $X\mapsto \cQ^!(X)$ as developed in~\cite{indcoh,GR}. Ind-coherent sheaves agree with quasicoherent sheaves for smooth schemes but differ on singular schemes, where [bounded complexes of] coherent sheaves (the compact objects of $\cQ^!$) differ from perfect complexes (the compact objects of $\cQ$). In other words, ind-coherent sheaves are to coherent sheaves and $G$-theory (the setting of Grothendieck-Riemann-Roch theorems) as quasicoherent sheaves are to perfect complexes and $K$-theory. 

Another sheaf theory to which the general formalism developed in~\cite{GR} applies is the theory of $\cD$-modules $X\to \cD(X)$, which for smooth schemes agrees with the classical notion of quasicoherent complexes of modules for the sheaf of differential operators $\cD_X$ (i.e. with compact objects given by bounded coherent complexes of $\cD_X$-modules). In general~\cite{crystals,GR} $\cD(X)$ is defined as the category of crystals, i.e., as ind-coherent complexes on the de Rham space of $X$
$$\cD(X):=\cQ^!(X_{dR}).$$ The compact objects in $\cD(X)$ for $X$ a scheme are the coherent $\cD$-modules, while for $X$ a stack they form a smaller class, the {\em safe} $\cD$-modules of~\cite{finiteness}.

The book~\cite{GR} develops the theories $\cQ^!$ and $\cD$ in particular as functors out of 2-categories of correspondences of schemes and stacks, with 1-morphisms from $X$ to $Y$ given by correspondences representable over $Y$ and 2-morphisms given by ind-proper ind-schematic morphisms of correspondences. This theory encodes a huge amount of structure, including in particular pullback and pushforward functors $f^!$ and $f_*$ satisfying base change, as well as the $(f_*,f^!)$ adjunction for proper (or even ind-proper) maps. They also establish symmetric monoidal properties of the sheaf theory.\footnote{We will also need from~\cite{finiteness} the construction of continuous pushforward functors for all maps of QCA stacks (such as the non-representable projection $\pi_X:X\to pt$ from a stack) and their base-change property.}  (See Sections~\ref{sheaf intro} and~\ref{sect shvs} for more background and precise statements.)

Let $\cS=\cQ^!$ or $\cS=\cD$ denote either of these sheaf theories.

We let $\omega_X =\pi_X^!\cO_{\Spec k}\in \cS(X)$ denote the appropriate dualizing sheaf. Thus for ind-coherent sheaves, $\omega_X\in \cQ^!(X)$ is the algebraic dualizing sheaf, and for $\D$-modules, $\omega_X\in \D(X)$ is the Verdier dualizing sheaf. 
Let $\omega(X)=\pi_{X*}\omega_X$ denote the corresponding  complex of global volume forms:
for ind-coherent sheaves, $\omega(X)\in k\module$ consists of algebraic  volume forms, and for $\D$-modules, $\omega(X)\in k\module$ consists of locally constant distributions (Borel-Moore chains) for $X$ a scheme. 

For $X$ a stack we use the continuous ``renormalized" pushforward functor on $\cD$-modules of~\cite{finiteness}, which roughly replaces equivariant cohomology (derived invariants) by a shift of equivariant homology (derived coinvariants, see~\cite[Example 9.1.6]{finiteness}), so that $\omega(Y/G)$ for a $G$-variety $Y$ is given by a shift of $G$-coinvariants on Borel-Moore chains on $Y$. (Note that even for $X=BG$ this differs from the standard definition of equivariant Borel-Moore homology, which is identified in this case with equivariant cohomology.) 

For a proper (or ind-proper) map $f:X\to Y$, adjunction provides an integration map $\int_f:\omega(X)\to \omega(Y)$.

\begin{example}
Let us continue with the special cases of Example~\ref{intro ex}, and focus in particular on algebraic distributions
$\omega_{\cL X} \in \cQ^!(\cL X)$
 on the loop space.

When $X$ is a smooth scheme, $\cL X \simeq  \BT_X[-1]$ is naturally Calabi-Yau, 
and its global volume forms are identified  with differential forms 
$\omega( \BT_X[-1]) \simeq \cO( \BT_X[-1]) \simeq \Sym(\Omega_X[1])$.
The canonical  ``volume form" on $\cL X$ is given
by the Todd genus (as explained by Markarian \cite{markarian}): the resulting integration
of functions on $\cL X$ differs from the integration of differential forms on $X$ by the Todd genus.

When $Y =BG$ is a classifying stack,  $\cL Y \simeq G/G$ is  naturally Calabi-Yau,
 and its global volume forms are invariant functions $\omega(G/G) \simeq \cO(G/G) \simeq \cO(G)^G$. If $G$ is reductive with Cartan 
 subgroup $T\subset G$ and Weyl group $W$, the naive invariants $\cO(G)^G \simeq \cO(T)^W$ are equivalent to the derived invariants, but in general there may be higher cohomology.
 \end{example}

\begin{thm}\label{main} Let $\cS=\cQ^!$ or $\cS=\cD$ denote either the theory of ind-coherent sheaves or 
$\D$-modules. Recall our conventions for stacks and morphisms, Setting~\ref{stack conventions}.

$\bullet$ For a stack $X$, there is a canonical identification
$HH_*(\cS(X))\simeq \omega(\cL X)
$
of the Hochschild homology of sheaves on $X$ with distributions (or renormalized Borel-Moore chains) on the loop space.

\medskip

$\bullet$ For a proper (or ind-proper) map of stacks $f:X\to Y$ 
the induced map $HH_*(\cS(X))\to HH_*(\cS(Y))$ is given by integration along the loop map $\cL f:\cL X\to \cL Y$.

\medskip
$\bullet$ {\bf Grothendieck-Riemann-Roch:} In particular, for 
any compact object $M\in \cS(X)$ (coherent sheaf or safe coherent $\cD$-module) with character $[M]\in HH_*(\cS(X))\simeq \omega(\cL X)$, 
there is a canonical identification
$$[f_*\cM]\simeq\int_{\cL f}[M]\in HH_*(\cS(Y))\simeq\omega(\cL Y)$$ 
In other words, the character of a pushforward along a proper map is the integral
of the character along the induced loop map.

\medskip

$\bullet$ {\bf Atiyah-Bott-Lefschetz:} Let $G$ be an affine group, and $X$ a proper stack with $G$-action, or equivalently, a proper map $f:X/G\to BG$.
Then for any compact object $M\in \cS(X/G)$ ($G$-equivariant coherent sheaf or safely equivariant coherent $\cD$-module on $X$), and element $g\in G$,
there is a canonical identification
$$[f_*M]|_g \simeq \int_{\cL f} [M]|_{X^g}
$$ 
In other words, under the identification of invariant functions and volume forms on the group, the value of the character of an induced representation at a group element is given by the integral
of the original character along the corresponding fixed point locus of the group element.

\medskip

$\bullet$ {\bf Extension to traces:} The trace $Tr(\cS(Z))$ of the endofunctor of $\cS(X)$ given by a self-correspondence (e.g. a self-map) $X\leftarrow Z\rightarrow X$ is given by distributions on the fixed points $\omega(Z|_{\Delta})$.

$\bullet$ For a map $f:(X,Z)\to (Y,W)$ of stacks with self-correspondences\footnote{i.e., we lift $f$ to an identification 
$Z\simeq X\times_Y W$ of correspondences from $X$ to $Y$}, the induced map $Tr(\cS(Z))\to Tr(\cS(W))$ is given 
by integration along fixed points $Z|_{\Delta_X}\to W_{\Delta_Y}$.

\end{thm}


\begin{example} [{\bf Frobenius-Weyl Character Formula}]

Here is a reminder of a well-known application of the Atiyah-Bott-Lefschetz formula in representation theory.

$\bullet$ If $G$ is a finite group,  and $X=G/K$ is a homogeneous set, and $M=k[G/K]$ the ring of functions, one recovers the Frobenius character formula for 
the induced representation $k[G/K]$.

$\bullet$ If $G$ is a reductive group, $X=G/B$ is the flag variety, $X/G=pt/B\to pt/G$.
The loop map $$\cL(X/G)=B/B\simeq \wt{G}/G\to G/G$$ is the (group) Grothendieck-Springer simultaneous resolution, with fibers giving fixed point loci on the flag variety. For $M=\cL$ an equivariant line bundle on $G/B$, and $g\in G$ runs over a maximal torus, one recovers the Weyl character formula for the induced representation $H^*(G/B, \cL)$.
\end{example}

\begin{remark}
The reader will note no explicit appearance of the Todd genus in the above formulas. 
In other words as for K-theory, pushforward of sheaves naturally agrees with the pushforward in Hochschild homology. The Todd genus arises in comparing these natural pushforwards with the pushforward in cohomology, i.e., integration of forms.
It arises when one unwinds the integration map $
\int_{\cL f}:\omega(\cL X) \to \omega(\cL Y),
$
given by Grothendieck duality, in terms of functions (or differential forms) using the Hochschild-Kostant-Rosenberg theorem.
In particular,  the familiar denominators in the Atiyah-Bott formula are implicit in the integration measure on the fixed point locus.

For instance, as mentioned  above, when $X$ is a smooth scheme, a geometric version of the HKR theorem asserts that the loop space is the total space of the shifted tangent complex  $\cL X \simeq \BT_X[-1]$, and global volume forms are canonically functions
$\omega(\cL X)  \simeq \cO( \BT_X[-1]) \simeq \Sym(\Omega_X[1])$. Under this identification (as explained by Markarian~\cite{markarian}),  
the resulting integration
of functions on $\cL X$ differs from the integration of differential forms on $X$ by the Todd genus.

 \end{remark}

\begin{remark} The paper~\cite{KP} carries out the program described in this paper (i.e. recovering classical identities from non-linear ones)  by calculating explicitly the derived contributions in the case of
the Atiyah-Bott formula.
\end{remark}

\begin{remark}
The main contribution of this paper is hidden in the statement of this theorem:
we establish nonlinear versions of character formulas 
in the setting of derived stacks, and deduce
classical formulas and new higher categorical analogues formally by applying suitable sheaf theories.
Thanks to the great generality of sheaf theory in derived algebraic geometry~\cite{GR}, the resulting applications hold with remarkably few assumptions. 

We are particularly interested in the higher categorical variants where one considers sheaves
of categories, in particular Frobenius-Weyl character formulas for group actions on categories. Since the requisite foundations are not yet fully
developed, we postpone details of this to future works. Applications include an identification of the character of 
 the category of $\D$-modules on the flag variety with the Grothendieck-Springer sheaf, and of
the trace of a Hecke functor on the category of $\D$-modules on the moduli of bundles on a curve with the cohomology of a
Hitchin space. 
\end{remark}

\subsection{Inspirations and motivations}
This work has many inspirations. 
It is heavily reliant on the $\infty$-categorical foundations of higher algebra, derived algebraic geometry and sheaf theory due to Lurie~\cite{HA,SAG} and Gaitsgory-Rozenblyum~\cite{GR}. It is also inspired by Lurie's cobordism hypothesis with singularities~\cite{TFT}, which provides a powerful unifying tool for higher
algebra. Already in the setting of one-dimensional field theory, this result can be viewed 
as a vast generalization of the classical theory Hochschild and cyclic homology and characters therein \cite{Loday}, 
(in particular the natural cyclic symmetry of Hochschild homology
is generalized to a circle action on the dimensions of arbitrary dualizable objects). 
 In particular, the formal properties of traces we use are simple instances of the
cobordism hypothesis with singularities on marked intervals and cylinders.
The work of To\"en and Vezzosi~\cite{TV} on traces and higher Chern characters of sheaves of categories (and in particular the role of the cobordism hypothesis therein) has also profoundly influenced our thinking. 

Another important inspiration is the
categorical theory of strong duality, dimensions and traces introduced by Dold and Puppe in \cite{doldpuppe} (see \cite{may,PS}
for more recent developments) with the 
express purpose of proving
Lefschetz-type formulas. In \cite{doldpuppe}, dualizability of a space is achieved by
linearization (passing to suspension spectra), while our approach is to pass to categories of correspondences (or
spans) instead. We were also inspired by the preprint~\cite{markarian} and the subsequent work
\cite{Cal1,Cal2,Ram,Ram2,Shk}. There have been many recent papers 
\cite{Petit, lunts, polishchuk, cisinskitabuada} building on related ideas to prove Riemann-Roch
and Lefschetz-type theorems in the noncommutative context of differential graded categories and
Fourier-Mukai transforms; our work instead places these results in the context of
the general formalism of traces in $\oo$-categories, and generalizes them to commutative but
nonlinear settings.

The Grothendieck-Riemann-Roch type applications in this paper concern the character map taking coherent sheaves to
classes in Hochschild homology (or in a more refined version, to cyclic homology). This is significantly coarser than the
well established theory of Lefschetz-Riemann-Roch theorems valued in Chow groups (see the seminal \cite{Thomason},
the more recent \cite{joshua} and many references therein).  
Thus for schemes, the quantities compared are Dolbeault (or de Rham) cohomology classes rather than algebraic cycles or $K$- (or rather $G$-)theory classes. 

Our primary motivation is the development of foundations
for ``homotopical harmonic analysis'' of group actions on categories,
aimed at decomposing derived categories of sheaves (rather than classical
function spaces) under the actions of natural operators. This undertaking
follows the groundbreaking path of Beilinson-Drinfeld within the geometric
Langlands program and is consonant with general themes in geometric
representation theory. 
The pursuit of a geometric analogue of the Arthur-Selberg trace formula
by Frenkel  and Ng\^o~\cite{FN} has also been a source of inspiration and applications.

\begin{remark}
A companion paper~\cite{secondary} presents an alternative approach to Atiyah-Bott-Lefschetz formulas
(and in particular a conjecture of Frenkel-Ng\^o) as a special case of the ``secondary trace formula"
identifying trace invariants associated to two commuting endomorphisms of a sufficiently dualizable object.
This is also applied to establish the symmetry of the 2-class functions on a group constructed
as the 2-characters of categorical representations.
\end{remark}

\subsection{Acknowledgements}

We would like to thank Dennis Gaitsgory and Jacob Lurie 
for providing both the foundations and the inspiration for this work, as well as helpful comments
and specifically D.G. for discussions of~\cite{GR}.
We would also like to thank Toly Preygel for many discussions about derived algebraic geometry.

We gratefully acknowledge the support of NSF grants DMS-1103525 (D.BZ.)
and DMS-1319287 (D.N.).

\section{Overview}\label{overview}

\subsection{Traces in category theory}
We highlight structures arising in the general theory of
dualizable objects in symmetric monoidal higher categories (see also \cite{doldpuppe,may,PS}). For
legibility, we suppress all $\oo$-categorical
notations and complications from the introduction. We rely on~\cite{GR} for the theory of symmetric monoidal 
$(\oo,2)$-categories, though only the formal outline of the theory is in fact needed for this paper. See~\cite{TV, HSS} for thorough treatments of the theory of traces in higher category theory.

The basic notion in the theory is that of {\em dimension} of a
dualizable object of a symmetric monoidal category $\cA$. By
definition, for such an object $A$ there exists another $A^\vee$
together with a  coevaluation map $\eta_A$ and evaluation map $\epsilon_A$ satisfying standard
identities.
By definition, the dimension of $A$ is the endomorphism of the  the unit $1_\cA$ given by the  composition
$$\xymatrix{ 1_\cA\ar[rr]^-{\eta_A}\ar@/_2pc/^-{\dim(A)}[rrrr] &&
  A\ot A^\vee \ar[rr]^-{\epsilon_A}&& 1_\cA}$$

\begin{example}
For $V$ a vector space, $V^\vee =\Hom_k(V, k)$ is the vector space of functionals, $\epsilon_V: V\otimes V^\vee \to k$
is the usual evaluation of functionals, $\eta_V:k\to \End(V)\simeq V\ot V^\vee$ is the
identity map (which exists only for $V$
finite-dimensional), and $\dim(V)$ can be regarded as an element of the ground field (by evaluating it on the multiplicative unit).
\end{example}

\begin{remark}[Duality and naiv\"et\'e in $\oo$-categories.]
It is a useful technical  observation that the notion of dualizability in the setting of $\oo$-categories is a ``naive" one: it
is a property of an object that can be checked in the underlying homotopy category. As a result, all of the 
categorical and 2-categorical calculations in this paper are similarly naive and explicit (and analogous to familiar unenriched categorical assertions), 
involving only small amounts of data that can
be checked by hand. 
\end{remark}

The notion of dimension is a special case of the {\em trace} of an endomorphism $\Phi$
of a dualizable object $A$. By definition,  the trace of $\Phi$ is the endomorphism of the unit $1_\cA$
given by  the composition
$$\xymatrix{1_{\cA}\ar[r]^-{\eta_A}\ar@/_2pc/^-{\Tr(\Phi)}[rrrr]& A \otimes A^\vee\ar[rr]^-{\Phi \otimes \id_{A^\vee}} &&  A\ot A^{\vee}
\ar[r]^-{\epsilon_A}& 1_\cA}$$ 
which recovers the dimension for $\Phi = \id_{A}$. 

A key feature of dimensions and traces is their {\em cyclicity}, which at the coarsest level is expressed by
a canonical equivalence $$\xymatrix{m(\Phi,\Psi):\Tr(\Phi\circ\Psi)\ar[r]^-{\sim}& \Tr(\Psi\circ\Phi),}$$ see Proposition \ref{cyclic trace}. 
 At a much deeper level, an important corollary of the cobordism hypothesis \cite{TFT} is the existence of an 
 $S^1$-action on $\dim(A)$ for any dualizable object $A$ (and an analogous structure for general traces, see Remark \ref{full trace}).

\begin{remark}[Dimensions and traces are local]\label{dim local}
It is  useful for applications to note that the notion of dualizability and the definition of dimension and are  local in the category $\cA$. 
Namely, they only require knowledge of the objects $1_{\cA},A,A^\vee, A\ot A^{\vee}$,
the morphisms $\eta_A,\epsilon_A$, and standard  tensor product and composition identities among them.
Likewise, the notion
of trace only requires the additional endomorphism $\Phi$ along with a handful of additional identities. 
\end{remark}

\subsubsection{Functoriality of traces}
Now suppose the ambient  symmetric monoidal category $\cA$ underlies a 2-category, so there is the
possibility of noninvertible 2-morphisms. This allows for the
notion of left and right adjoints to morphisms. Let us say a morphism
$A\to B$ is {\em continuous}, or {\em right dualizable}, if it has a right
adjoint. (The terminology derives from the setting of presentable categories,
where the adjoint functor theorem guarantees the existence of right adjoints
for colimit preserving functors.)

Here are natural functoriality properties of
dimensions and traces.

\begin{prop} Let $A,B$ denote dualizable objects of $\cA$ and $f_*:A\to B$ a continuous morphism with right adjoint $f^!$.
\begin{enumerate}

\item There is  a canonical
map on dimensions
$$\xymatrix{\dim(A)\ar@/_2pc/^-{\dim(f_*)}[rrrrr] \ar[r]^-{=}&\Tr(\Id_A) \ar[r] & \Tr(f^! f_*) \ar[r]^-\sim & \Tr(f_* f^!) \ar[r] & \Tr(\Id_B)\ar[r]^-{=}&\dim(B)}$$
compatible with compositions of continuous morphisms.

\item Given endomorphisms $\Phi\in\End(A)$, $\Psi\in \End(B)$, and a {\em commuting structure}
$$\xymatrix{\alpha:f_*\circ\Phi\ar[r]^-{\sim}& \Psi\circ f_*}$$
there is a canonical map on traces
$$
\xymatrix{
\Tr(f_*,\alpha): \Tr(\Phi)\ar[r] & \Tr(\Psi)
}
$$
compatible with compositions of continuous morphisms with commuting structures.
\end{enumerate}
\end{prop}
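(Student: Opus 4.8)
The plan is to build both maps as explicit finite composites of three elementary operations on traces---functoriality of $\Tr$ in its endomorphism argument, the cyclicity equivalence of Proposition~\ref{cyclic trace}, and whiskering by the unit $u:\Id_A\Rightarrow f^!f_*$ and counit $c:f_*f^!\Rightarrow\Id_B$ of the adjunction $f_*\dashv f^!$---and then to verify compatibility with composition by a pasting computation. Observe that part (1) is the special case of part (2) in which $\Phi=\Id_A$, $\Psi=\Id_B$, and $\alpha$ is the identity $2$-morphism $f_*\circ\Id_A=\Id_B\circ f_*$; so I would construct the map of part (2) and recover part (1) by this specialization (the displayed composite for $\dim(f_*)$ then matches term by term).

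The one functoriality input the definition supplies for free is the following. Since $\Tr(\Phi)$ is by definition $\epsilon_A\circ(\Phi\otimes\id_{A^\vee})\circ\eta_A$, any $2$-morphism $\beta:\Phi\Rightarrow\Phi'$ between endomorphisms of the same dualizable $A$ induces a map $\Tr(\beta):\Tr(\Phi)\to\Tr(\Phi')$ simply by whiskering $\beta$ into the middle slot, and $\Tr(-)$ is manifestly functorial for vertical composition and identities of such $2$-morphisms. As stressed in Remark~\ref{dim local}, this uses only the local data $1_\cA,A,A^\vee,A\otimes A^\vee,\eta_A,\epsilon_A$ and requires no higher coherence.

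With this in hand I define $\Tr(f_*,\alpha)$ as the composite
$$\Tr(\Phi)\to\Tr(f^!f_*\Phi)\xrightarrow{\ \sim\ }\Tr(f^!\Psi f_*)\xrightarrow{\ \sim\ }\Tr(\Psi f_*f^!)\to\Tr(\Psi),$$
where the first arrow is $\Tr$ applied to the whiskered unit $u\Phi:\Phi\Rightarrow f^!f_*\Phi$, the second is $\Tr$ applied to the whiskered commuting structure $f^!\alpha:f^!f_*\Phi\Rightarrow f^!\Psi f_*$, the third is the cyclicity equivalence $m(f^!,\Psi f_*)$ of Proposition~\ref{cyclic trace}, and the fourth is $\Tr$ applied to the whiskered counit $\Psi c:\Psi f_*f^!\Rightarrow\Psi$. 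Each arrow typechecks because $f^!:B\to A$ and $f_*:A\to B$, so $f^!f_*\Phi$ and $f^!\Psi f_*$ are endomorphisms of $A$ while $\Psi f_*f^!$ is an endomorphism of $B$; and setting $\Phi,\Psi$ to identities with $\alpha=\id$ collapses the second arrow and reproduces the composite of part (1). The construction is canonical, being determined by the adjunction data and $\alpha$ alone.

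\textbf{Main obstacle.} The delicate point is compatibility with composition. Given a further continuous $g_*:B\to C$ with right adjoint $g^!$ and a commuting structure $\beta:g_*\Psi\Rightarrow\Theta g_*$, I must show that $\Tr(g_*f_*,\beta*\alpha)=\Tr(g_*,\beta)\circ\Tr(f_*,\alpha)$, where $g_*f_*\dashv f^!g^!$ is the standard composite adjunction (its unit and counit assembled from those of $f$ and $g$ via the triangle identities) and $\beta*\alpha=(\beta f_*)\circ(g_*\alpha)$ is the pasted commuting structure. The content is that the \emph{two} separate cyclicity moves occurring in $\Tr(g_*,\beta)\circ\Tr(f_*,\alpha)$, together with the unit/counit maps bracketing the intermediate term $\Tr(\Psi)$, must reassemble into the \emph{single} cyclicity move $m(f^!g^!,\Theta g_*f_*)$ appearing in $\Tr(g_*f_*,\beta*\alpha)$. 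I expect to prove this by a finite pasting computation in the homotopy $2$-category, using (i) the triangle identities to rewrite the composite unit and counit through $\Tr(\Psi)$, (ii) naturality of $\Tr(\beta)$ under whiskering, and (iii) the rotation coherence of cyclicity---that $m$ for a triple composite factors through the two-term moves and is natural in each argument. By the naïveté principle of Remark~\ref{dim local} this remains a check on explicitly named maps rather than a construction of higher homotopies, but it is the step that genuinely requires care.
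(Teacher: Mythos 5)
Your construction of $\Tr(f_*,\alpha)$ as the composite $\Tr(\Phi)\to\Tr(f^!f_*\Phi)\to\Tr(f^!\Psi f_*)\to\Tr(\Psi f_*f^!)\to\Tr(\Psi)$ (unit, whiskered commuting structure, cyclicity, counit) is exactly the paper's Definitions \ref{functorial dim} and \ref{functorial trace}, and your planned verification of compatibility with composition --- composite adjunction via triangle identities, the triple-composite coherence of $m$ (the paper's Lemma \ref{cyclic composition}), and naturality/interchange --- is precisely the pasting diagram in the proof of Proposition \ref{tracefunctoriality} and its ``minor expansion'' for traces. The only cosmetic difference is that you derive part (1) as the special case $\Phi=\Id_A$, $\Psi=\Id_B$ of part (2), whereas the paper treats dimensions first and then generalizes; the content is the same.
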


We refer to the compatibility with compositions stated in the proposition as {\em abstract Grothendieck-Riemann-Roch}.
To see its import more concretely, let us restrict the generality and focus on an {\em object} of $A$ in the sense of a
morphism $V:1_\cA\to A$.

\begin{corollary}
 Let $A,B$ denote dualizable objects of $\cA$ and $f_*:A\to B$ a continuous morphism.
For $V:1_\cA\to A$ an object of $A$, we obtain a map on dimensions
$$
\xymatrix{\dim(V):1_\cA \simeq \dim(1_\cA)\ar[r] & \dim(A)
}
$$ 
called the {\em character} of $V$ and alternatively denoted by $[V]$.
It satisfies abstract
Grothendieck-Riemann-Roch in the sense 
that the following diagram commutes
$$\xymatrix{ 1_\cA\ar[rr]^-{[V] }\ar@/_2pc/^-{[f_*V]}[rrrr] &&
  \dim(A)  \ar[rr]^-{\dim(f_*)}&& \dim(B)}$$

\end{corollary}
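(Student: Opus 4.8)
The plan is to obtain the Corollary as a direct specialization of part (1) of the Proposition, taking the continuous morphism there to be $V$ itself. Two preliminary observations make the specialization legitimate. First, the unit object $1_\cA$ is canonically self-dual, its coevaluation and evaluation being the unit constraints of the monoidal structure; hence $\dim(1_\cA)=\epsilon_{1_\cA}\circ\eta_{1_\cA}\simeq\Id_{1_\cA}$, and this is precisely the identification $1_\cA\simeq\dim(1_\cA)$ naming the source of the character. Second, an object $V:1_\cA\to A$ is continuous: in the motivating linear setting $1_\cA=\Vect$ and $V$ is the colimit-preserving functor $W\mapsto W\otimes V$, whose right adjoint $V^!$ is the mapping complex $\Hom_A(V,-):A\to 1_\cA$; in the abstract $2$-category we simply take the existence of $V^!$ as the standing hypothesis (as guaranteed by the adjoint functor theorem in the presentable case).

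Granting this, $V$ is a continuous morphism between the dualizable objects $1_\cA$ and $A$, so part (1) applies verbatim and produces $\dim(V):\dim(1_\cA)\to\dim(A)$; composing with $1_\cA\simeq\dim(1_\cA)$ defines the character $[V]:1_\cA\to\dim(A)$. For completeness one can unwind $[V]$ as the composite that inserts the unit $\Id_{1_\cA}\to V^!V$, applies the cyclicity equivalence $\Tr(V^!V)\xrightarrow{\sim}\Tr(VV^!)$, and then the counit $VV^!\to\Id_A$; but the Corollary uses nothing about this internal structure beyond its existence and its functoriality.

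The commutativity of the Grothendieck-Riemann-Roch triangle is then exactly the compatibility with compositions of continuous morphisms asserted in part (1), applied to the composable pair $1_\cA\xrightarrow{V}A\xrightarrow{f_*}B$. Composites of continuous morphisms are again continuous, with $(f_*\circ V)^!\simeq V^!\circ f^!:B\to 1_\cA$, so $f_*V:=f_*\circ V$ is an object of $B$ in the required sense and carries a character $[f_*V]=\dim(f_*\circ V)$. Functoriality gives $\dim(f_*\circ V)\simeq\dim(f_*)\circ\dim(V)$, which under the identification $1_\cA\simeq\dim(1_\cA)$ reads $[f_*V]\simeq\dim(f_*)\circ[V]$ --- the asserted commutativity.

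I expect the only point needing genuine attention, as opposed to transcription from the Proposition, to be the continuity of $V$ and hence of $f_*V$. In the abstract formulation this is a hypothesis on the object $V$, whereas in the presentable/linear setting of the applications it is automatic: objects are colimit-preserving functors out of the unit, and the adjoint functor theorem supplies the right adjoints $V^!$ and $(f_*V)^!\simeq V^!f^!$. Everything else is the bookkeeping of composing the unit, cyclicity, and counit maps, which is handled once and for all by the compositional functoriality already established in part (1).
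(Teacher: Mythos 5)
Your proposal is correct and follows the paper's own route exactly: the paper obtains this corollary as an immediate specialization of the compositional functoriality of $\dim$ (Proposition \ref{tracefunctoriality}) applied to the pair $1_\cA\xrightarrow{V}A\xrightarrow{f_*}B$, using the evident equivalence $\dim(1_\cA)\simeq 1_\cA$. Your added attention to the continuity of $V$ is consistent with the paper, which in the body simply requires $V$ to be a morphism in $\cA^{cont}$ (with the presentable/compact-object setting supplying the right adjoint in the applications).
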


\begin{remark}[Functoriality of dimensions and traces is local]\label{functoriality local}
As in Remark \ref{dim local}, it is useful to note that the functoriality of dimension is  local, depending only
on a handful of objects, morphisms and identities, along with
the additional adjunction data $(f_*,f^!)$. A similar observation applies to the functoriality of traces.
\end{remark}

\begin{remark}
It follows from the cobordism hypothesis with singularities \cite{TFT} (see~\cite{TV}) that the morphism $\dim(f_*)$  is
$S^1$-equivariant, and hence the character $[V]$ is $S^1$-invariant, though we will not elaborate on this structure here. We refer to~\cite{HSS} for a thorough study of the functoriality and cyclicity of traces.
\end{remark}

\begin{example} Let $dgCat_k$ denote the symmetric monoidal $\oo$-category of presentable $k$-linear differential graded categories (or alternatively, stable presentable $k$-linear $\oo$-categories), see e.g.~\cite{GR}.
In this setting, any compactly generated category $A$ is dualizable, and its dimension
 is the Hochschild chain complex $\dim(A)=HH_*(A)$. The $S^1$-action on $\dim(A)$ corresponds to Connes' cyclic structure
 on $HH_*(A)$,
so that in particular, the localized $S^1$-invariants of $\dim(A)$ form the periodic cyclic homology of $A$.  

More generally, the trace of an endofunctor $\Phi:A\to A$ is  the Hochschild homology
$\Tr(\Phi)=HH_*(A,\Phi)$.  For example, if $A=R\module$ for a
dg algebra $R$, then $\Phi$ is represented by an $R$-bimodule $M$, and we recover the 
Hochschild homology $HH_*(R,M)$.

Any compact object  $M\in A$ defines a continuous functor 
$$\xymatrix{1_{dgCat_k}=dgVect_k\ar[r]^-{M} & A}$$
whose  character is a vector 
$$\dim(M)\in HH_*(A)
$$ in Hochschild homology (with refinement in cyclic homology). 
The abstract Grothendieck-Riemann-Roch theorem
expresses the natural functoriality of characters   in Hochschild homology (or their refinement in cyclic homology). 
In fact, the construction of characters factors through the canonical Dennis
trace map 
$$
\xymatrix{
A_{cpt}\ar[r] & K(A)\ar[r] & HH_*(A)
}$$ from the space  $A_{cpt}$ of compact objects of $A$. 
\end{example}


\subsection{Traces in geometry}
To apply the preceding formalism to geometry, it is useful to organize spaces and maps within a suitable categorical framework.
We then arrive at loop spaces and fixed point loci as
{nonlinear} expressions of dimensions and traces.
This simple observation provides the core of the paper.
 Throughout the discussion, we continue to suppress all $\oo$-categorical
notations and complications. Our reference for the correspondence 2-category of stacks is Section V of~\cite{GR} (see also~\cite{haugsengspan}).

To begin, consider the general setup of the symmetric monoidal category $Corr(\cC)$ of correspondences, or spans, in a category $\cC$ such as stacks (or formally a symmetric monoidal $\infty$-category with finite limits; see~\cite{barwick,haugsengspan}). Here the objects $X\in Corr(\cC)$ are the objects of $\cC$, the morphisms $Corr_\cC(X,Y)$ are arbitrary spans in $\cC$,
$$\xymatrix{
&Z\ar[ld]\ar[rd]&\\
X&&Y}$$ (more generally one can require the left and right legs to live in specified subcategories of $\cC$ as in~\cite{GR}).
The composition of morphisms $Z\in Corr_\cC(X,Y)$ and $W\in Corr_\cC(Y,U)$ is given by fiber product
$$\xymatrix{&&Z\times_Y W\ar[ld]\ar[rd]&&\\
&Z\ar[ld]\ar[rd]&&W\ar[ld]\ar[rd]& \\
X&&Y&&U}$$
and the symmetric monoidal structure is given in terms of that on $\cC$ (Cartesian product in the case of stacks).
For the purpose of calculating dimensions and traces, we need not require any
further properties of the spaces of $Corr(\cC)$, since we need only 
the modest local data discussed in Remarks~\ref{dim local} and~\ref{functoriality local}.
(See \cite{TFT} and \cite{FHLT}, where the higher categories $Fam_n$ of iterated correspondences of manifolds 
are constructed
and applied.)

With applications in mind, we will specialize to the correspondence category $Corr_k=Corr(St_k)$ of 
derived stacks over $k$.  It would also be interesting to work
with smooth manifolds instead, for example 
through the theory of $C^\infty$-stacks~\cite{joyce} (see Remark \ref{Cinfty}).

It is natural to enhance $Corr(\cC)$ to a 2-category $\ul{Corr}(\cC)$ by
 allowing non-invertible maps, or more generally correspondences, between correspondences (see Section V of~\cite{GR} or~\cite{haugsengspan} for full details), so that maps from $X$ to $Y$ form the category of objects over $X\times Y$. 
 Our constructions naturally fit into
 the 2-category
 $\ul{Corr}^{prop}_k$ with non-invertible 2-morphisms restricted to be {\em proper} (or more generally ind-proper) maps 
 of correspondences.

\begin{remark}[Correspondences are bimodules]\label{correspondences are bimodules}

It is useful to view the correspondence category $Corr$ within the framework of coalgebras in symmetric monoidal categories.
The diagonal map $X\to X\times X$ makes any space or stack into a cocommutative coalgebra object with respect
to the Cartesian product monoidal structure (or commutative coalgebra in the opposite category). Moreover, a map $Z\to X$ is equivalent
to an $X$-comodule structure on $Z$.
Thus correspondences from $X$ to $Y$ may be interpreted as $X-Y$-bicomodules, with  composition of correspondences
given by tensor product of bicomodules. 

Furthermore, it is natural to enhance $Corr$ to a  2-category by allowing non-invertible 
maps between correspondences. 
This can be viewed 
as a special case of
the Morita category of coalgebras in a symmetric monoidal category.
The 2-category $\ul{Corr}$ of spaces, correspondences, and maps of correspondences  is the Morita
category on spaces regarded as coalgebra objects.
(In particular, the cocommutativity of the coalgebra objects implies they are canonically self-dual, and the transpose
of a correspondence is the same correspondence read backwards.) 
If we further keep track of the $E_n$-coalgebra structure of spaces and consider the corresponding Morita $(n+1)$-category, we recover
the $(n+1)$-category of iterated correspondences of correspondences. (See~\cite{haugsengspan,haugsengEn} for a thorough treatment of categories of spans and Morita categories of $E_n$ algebras; 
see also for example the category $Fam_n$ of \cite{TFT} and \cite{FHLT} in the topological setting.)
\end{remark}

\subsubsection{Geometric dimensions and loop spaces}

A crucial feature of the category $Corr_k$  is that any object $X\in Corr_k$ is dualizable (in fact, canonically self-dual), thanks
to the diagonal correspondence.\footnote{Likewise, if we wish to make a space $n$-dualizable for any $n$ we may 
simply consider it as an object of a higher correspondence category as in Remark~\ref{correspondences are bimodules},
since $E_n$-(co)algebras are $n+1$-dualizable objects of the corresponding Morita category. In other words, a space $X$ defines a 
topological field theory of any dimension 
valued in the appropriate correspondence category.} Note for this it is crucial that we allow all maps, including the map $\pi_X:X\to pt$ for any $X$, as possible legs in a span.

We have the following calculations of dimensions and their functoriality. Note that the point $pt=\Spec k$ is the unit of $Corr_k$.
We keep track of properness of maps of correspondences for the later application of sheaf theory.

\begin{prop} Let $Corr_k$ be the category of derived stacks and correspondences, and $\ul{Corr}^{prop}_k$ the 2-category of derived stacks, correspondences, and 
proper maps of correspondences.

(1) Any derived stack $X$ is dualizable as an object of $Corr_k$, and its dimension $\dim(X)$ is identified 
with the loop space 
$$
\cL X= X^{S^1}\simeq X\times_{X\times X} X
$$ 
 regarded as a self-correspondence of $pt=\Spec k$.

(2) A map $f:X\to Y$ regarded as a  correspondence from $X$ to $Y$
is continuous in $\ul{Corr}^{prop}_k$ if and only if $f$ is proper. 
Given a proper map $f:X\to Y$, its induced   map 
$$
\xymatrix{
\dim(f):\dim(X)\ar[r] & \dim(Y)
}
$$ is identified with the
loop map 
$$
\xymatrix{
\cL f:\cL X \ar[r] & \cL Y
}
$$

\end{prop}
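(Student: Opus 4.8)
The plan is to build every piece of structure by hand from diagonals and structure maps, as licensed by Remarks~\ref{dim local} and~\ref{functoriality local}: each map will be a map of correspondences assembled from derived fiber products, and each identity will reduce to base change. For part~(1), I would equip $X$ with the self-duality $X^\vee=X$ in which both the coevaluation $\eta_X$ and the evaluation $\epsilon_X$ are the diagonal correspondence $X\xrightarrow{\Delta}X\times X$, read as a span from $pt$ to $X\times X$ and from $X\times X$ to $pt$ respectively. Each of the two zigzag identities then collapses to a single computation: the composite $(\epsilon_X\ot\id_X)\circ(\id_X\ot\eta_X)$ has two apices mapping into $X\times X\times X$ by $(a,b)\mapsto(a,b,b)$ and $(c,d)\mapsto(c,c,d)$, whose derived fiber product forces $a=b=c=d$ and so returns the diagonal span $\id_X$. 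By definition the dimension is the self-correspondence $\epsilon_X\circ\eta_X$ of $pt$, and composing the two diagonal spans over $X\times X$ gives exactly the derived self-intersection $X\times_{X\times X}X$. Writing $S^1$ as the homotopy pushout $pt\sqcup_{pt\sqcup pt}pt$ and using that $\Map(-,X)$ carries it to the pullback $X\times_{X\times X}X$ identifies this with $\cL X=X^{S^1}$, proving~(1).

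For part~(2), I would represent $f\colon X\to Y$ by its graph correspondence $f_*=(X\xleftarrow{\id}X\xrightarrow{f}Y)$ and propose the transpose $f^!=(Y\xleftarrow{f}X\xrightarrow{\id}X)$ as its right adjoint. Computing composites of spans gives $f^!\circ f_*=(X\leftarrow X\times_Y X\rightarrow X)$ with legs the two projections, and $f_*\circ f^!=(Y\xleftarrow{f}X\xrightarrow{f}Y)$. Hence the unit $\id_X\Rightarrow f^!f_*$ is the relative diagonal $\Delta_f\colon X\to X\times_Y X$ and the counit $f_*f^!\Rightarrow\id_Y$ is $f\colon X\to Y$ itself, each read as a map of correspondences. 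The triangle identities are again formal consequences of base change. The counit is an admissible $2$-morphism of $\ul{Corr}_k$ exactly when $f$ is proper; the unit is a closed immersion, hence proper, under the standing affine-diagonal hypotheses. Since right adjoints are unique, any right adjoint to $f_*$ agrees with $f^!$ after forgetting properness, so its counit is identified with $f$; thus $f_*$ is continuous in $\ul{Corr}_k$ if and only if $f$ is proper.

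To identify $\dim(f_*)$ with $\cL f$, I would first record the general formula: for a self-correspondence $\Phi=(X\xleftarrow{p}W\xrightarrow{q}X)$, unwinding the definition of $\Tr$ through $\eta_X$, $\Phi\ot\id$, and $\epsilon_X$ presents $\Tr(\Phi)$ as the derived coincidence locus $W\times_{X\times X}X$ along $(p,q)$ and $\Delta$. Specializing gives $\Tr(\id_X)=\cL X$, while both $\Tr(f^!f_*)=(X\times_Y X)\times_{X\times X}X$ and $\Tr(f_*f^!)=X\times_{Y\times Y}Y$ simplify, by reassociating fiber products, to the same space $X\times_Y\cL Y$ of loops in $Y$ anchored at points of $X$ through $f$. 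Functoriality of traces along a map of correspondences is the induced map of coincidence loci, so the unit induces $\cL X\to X\times_Y\cL Y$ sending a loop to its basepoint together with its image loop $\cL f$, and the counit induces $X\times_Y\cL Y\to\cL Y$ forgetting the basepoint. With the cyclicity equivalence of Proposition~\ref{cyclic trace} realized as the evident identification of the two presentations of $X\times_Y\cL Y$, the composite defining $\dim(f_*)$ is $\gamma\mapsto f\circ\gamma$, i.e.\ the loop map $\cL f$; properness of $f$ makes $\cL f$ proper, as required for a $2$-morphism of $\ul{Corr}_k$.

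I expect the main obstacle to be coherence rather than any single fiber-product computation: one must check that the specific unit, counit and cyclicity maps produced above satisfy the triangle identities and assemble so that the composite is the loop map \emph{on the nose}, rather than $\cL f$ twisted by some autoequivalence of $\cL Y$ such as loop rotation. Concretely, the delicate point is verifying that the cyclicity isomorphism $\Tr(f^!f_*)\xrightarrow{\sim}\Tr(f_*f^!)$ of Proposition~\ref{cyclic trace} is carried to the identity under the two identifications with $X\times_Y\cL Y$; once this compatibility is in hand, the remaining identifications are exactly the ``naive'' local data the formalism is designed to make transparent.
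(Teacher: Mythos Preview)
Your proposal is correct and follows essentially the same approach as the paper (Propositions~\ref{basic stack dimensions} and~\ref{geometric dims}, together with the intervening proposition constructing the $(f_*,f^!)$ adjunction via the relative diagonal and $f$). The obstacle you flag---identifying the cyclicity map $m(F^r,F)$ with the evident geometric identification of the two presentations of $X\times_Y\cL Y$---is precisely what the paper isolates as Lemma~\ref{geometric cyclic trace}, proved by chasing the defining diagram of $m$ through the correspondence category.
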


\begin{remark}
All of the objects and maps of the proposition have natural $S^1$-actions, on the one hand coming from loop rotation, on the other hand coming from the cyclic symmetry of dimensions. One can check that the identifications of the proposition are $S^1$-equivariant
(see Remark~\ref{rem: cyclic}).
\end{remark}

\begin{remark}
Recall \cite{Toen,conns} that for a 
 derived scheme $X$, 
the loop space $\cL X\simeq T_X[-1]$ is the total space of the shifted tangent complex.
The action map of the $S^1$-rotation action is encoded by the de Rham differential.
For an underived stack $X$, the loop space is a derived enhancement of the inertia stack
$
IX=\{x\in X, \gamma\in Aut(x)\}.
$
The action map of the $S^1$-rotation action is manifested by
 the ``universal automorphism" of any sheaf on $\cL X$.
\end{remark}

\begin{example}
Let $G$ denote an algebraic group and $BG=pt/G$ its classifying space.
There is a canonical identification $\cL BG\simeq G/G$ of the loop space and adjoint quotient.

Suppose we are given a  $G$-derived stack $X$, or equivalently a  morphism $\pi:X/G\to BG$, from which one recovers $X\simeq X/G \times_{BG} pt$. 
(Note that if we want $\pi$ proper we should take $X$ itself proper.)

Let us explain how the loop map $\cL \pi:\cL(X/G) \to \cL(BG)$ captures the fixed points of $G$ acting on $X$. For any self-map $g: X\to X$, let us write  $X^g$ for the derived fixed point locus given by the derived intersection
$$
X^g= \Gamma_g \times_{X\times X} X.
$$
of the graph $\Gamma_g\subset  X\times X$ with the diagonal. Then $\cL \pi$ map fits into a commutative square
$$
\xymatrix{
\ar[d]_-\sim \cL(X/G)  \ar[r]^-{\cL \pi} & \cL(BG) \ar[d]_-\sim \\
 \{g\in G, x\in X^g\}/G \ar[r]^-p & G/G
}
$$
where $p$ projects to the group element.

In particular, fix a group element $g\in G$, with conjugacy class $\OO_g\subset G$, and centralizer $\cZ_G(g)\subset G$,
so that  $\OO_g/G\simeq B\cZ_G(g) \in G/G$. Then 
the corresponding fiber of $\cL \pi$ is 
the equivariant fixed point locus  $X^g_G=X^g/\cZ_G(g)$, or in other words we have a fiber diagram
$$
\xymatrix{
\ar[d]  X^g_G \ar[r] & \OO_g/G \ar[d] \\
 \cL(X/G)  \ar[r]^-{\cL \pi} & \cL(BG)  
 }
$$

Let us specialize to the case of a subgroup $K\subset G$, and the quotient $X=G/K$, so that we have a map of classifying stacks $\pi: BK \simeq G\backslash (G/K)\to BG$.
Here the loop map $\cL \pi$ realizes the familiar geometry of the Frobenius character formula
$$
\xymatrix{
\ar[d]_-\sim \cL(BK) \ar[r]^-{\cL \pi} &  \cL(BG)\ar[d]_-\sim\\
  K/K \simeq  \{g\in G, x\in (G/K)^g\}/G
 \ar[r]^-p &  G/G
 }
 $$
The equivariant fixed point loci express the equivariant inclusion of conjugacy classes.

Specializing further, for $G$ a reductive group, $B\subset G$ a Borel subgroup, and $X=G/B$ the flag variety, we recover the group-theoretic Grothendieck-Springer resolution
$$
\xymatrix{
\ar[d]_-\sim \cL(BB) \ar[r]^-{\cL \pi} &  \cL(BG)\ar[d]_-\sim\\
  B/B \simeq \{g\in G, x\in (G/B)^g\}/G
 \ar[r]^-p &  G/G
 }
 $$
\end{example}

\subsubsection{Geometric traces of correspondences}
More generally, we have the following calculations of  traces and their functoriality. 

\begin{prop}
Let $Corr_k$ be the category of derived stacks and correspondences, and $\ul{Corr}^{prop}_k$ the 2-category of derived stacks, correspondences, and proper maps of correspondences.

(1) The trace of a self-correspondence $Z\in \ul{Corr}^{prop}_k(X,X)$ is its fiber product with the diagonal
$$
\Tr(Z) \simeq Z|_{\Delta}=Z\times_{X\times X} X\simeq Z\times_X\cL X
$$ 

In particular, for the graph $\Gamma_f\to X\times X$ of a self-map $f:X\to X$, its trace is the fixed point locus of the map
$$
\Tr(\Gamma_g)  \simeq X^f=\Gamma_f\times_{X\times X} X
$$

(2) Given a proper map $f:X\to Y$ regarded as a correspondence from $X$ to $Y$, and self-correspondences $Z\in\ul{Corr}^{prop}_k(X, X)$ and $W \in\ul{Corr}^{prop}_k(Y, Y)$, together with an identification 
$$
\xymatrix{
\alpha: Z  \ar[r]^-\sim  & X\times_Y W  
}$$ 
of correspondences from $X$ to $Y$,
the induced abstract trace map 
$$
\xymatrix{
\Tr(f,\alpha):\Tr(Z) \ar[r] & \Tr(W)
}
$$ 
is equivalent to the induced geometric map 
$$
\xymatrix{
\tau(f, \alpha): Z|_{\Delta_X}\ar[r] &  W|_{\Delta_Y}
}$$
\end{prop}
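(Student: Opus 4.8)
The plan is to compute everything directly from the self-duality of objects in $Corr_k$, exploiting the locality of the constructions (Remarks~\ref{dim local} and~\ref{functoriality local}): all that is needed is the duality data, the endomorphisms, the adjunction $(f_*,f^!)$, and a short list of identities, each of which is a fiber-product computation checkable in the homotopy category. First I would record that every $X$ is self-dual with $X^\vee\simeq X$, the coevaluation $\eta_X$ and evaluation $\epsilon_X$ both being the diagonal correspondence, i.e. the span $pt\leftarrow X\xrightarrow{\Delta_X}X\times X$ read forwards and backwards. The zig-zag identities amount to the observation that the two ``partial diagonals'' $X\times X\to X\times X\times X$, $(a,b)\mapsto(a,a,b)$ and $(a,b)\mapsto(a,b,b)$, meet transversally along the full diagonal, so that the relevant composite collapses (even derived) to $\id_X$. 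This supplies exactly the local data needed to evaluate traces.

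For part (1), $\Tr(Z)$ is the composite correspondence $\epsilon_X\circ(Z\otimes\id_{X^\vee})\circ\eta_X$ from $pt$ to $pt$, that is, an iterated fiber product. Writing $Z$ as a span $X\xleftarrow{s}Z\xrightarrow{t}X$, the first fiber product against $\eta_X$ collapses the apex of $Z\otimes\id$ to $Z$, now mapping to $X\times X$ by $(s,t)$; the second, against $\epsilon_X$, intersects with the diagonal and yields $Z\times_{X\times X}X=Z|_{\Delta}$. The rearrangement $Z\times_{X\times X}X\simeq Z\times_X\cL X$ is associativity of fiber products together with $\cL X=X\times_{X\times X}X$. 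Specializing to the graph $\Gamma_f$ (the span with $s=\id$, $t=f$) gives $\{x: x=f(x)\}=X^f$, as claimed.

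For the functoriality in part (2) I would first identify the adjunction of a proper $f$: the correspondence $f_*$ is the span $X\xleftarrow{\id}X\xrightarrow{f}Y$, its right adjoint $f^!$ is the reversed span $Y\xleftarrow{f}X\xrightarrow{\id}X$, the unit $\id_X\to f^!f_*$ is the relative diagonal $\Delta_{X/Y}:X\to X\times_Y X$, and the counit $f_*f^!\to\id_Y$ is the map $f:X\to Y$ of spans over $Y\times Y$ (properness is precisely what makes this counit a legitimate proper $2$-morphism). Composing correspondences then shows the commuting structure $\alpha:f_*\circ Z\xrightarrow{\sim}W\circ f_*$ is exactly an identification $Z\simeq X\times_Y W$ of spans over $X\times Y$, matching the hypothesis. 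Next I would unwind the abstract recipe of the functoriality proposition,
$$\Tr(Z)\to\Tr(f^!Wf_*)\xrightarrow{\ \sim\ }\Tr(Wf_*f^!)\to\Tr(W),$$
where the first arrow is induced by $Z\xrightarrow{u}f^!f_*Z\xrightarrow{f^!\alpha}f^!Wf_*$, the middle is cyclicity, and the last is induced by the counit $Wf_*f^!\xrightarrow{c}W$. Translating each $2$-morphism into the corresponding map of apexes and composing, a ``fixed point'' $z\in Z|_{\Delta_X}$ with $s(z)=t(z)=x$ is carried to the $W$-component $w$ of $\alpha(z)\in X\times_Y W$; the constraints force $s_W(w)=f(x)=t_W(w)$, so $w\in W|_{\Delta_Y}$, and this is exactly the geometric map $\tau(f,\alpha)$.

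The routine content is the bookkeeping of source and target maps through the iterated fiber products. The genuine point of care is the middle, cyclicity, step: one must verify that the abstract equivalence $\Tr(f^!\cdot Wf_*)\simeq\Tr(Wf_*\cdot f^!)$ is realized geometrically by the expected reshuffling of the defining fiber products (a manifestation of loop rotation), and that composing it with the counit $f$ produces precisely the projection onto $W|_{\Delta_Y}$ with no extraneous twist. Because dualizability and these trace constructions are naive and local, this reduces to a single finite diagram chase in the homotopy category rather than a verification of higher coherences.
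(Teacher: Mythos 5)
Your proposal is correct and follows essentially the same route as the paper: self-duality of $X$ via the diagonal correspondence, computation of $\Tr(Z)$ as the iterated fiber product $Z\times_{X\times X}X$, identification of the unit and counit of the $(f_*,f^!)$ adjunction with the relative diagonal $X\to X\times_Y X$ and with $f$ itself, and a step-by-step geometric unwinding of the abstract trace functoriality in which the cyclic-symmetry equivalence is realized as the evident reshuffling of fiber products (the paper isolates this last point as Lemma~\ref{geometric cyclic trace}). The one step you flag as the genuine point of care is exactly the step the paper devotes its separate lemma to, so the match is essentially complete.
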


\subsection{Trace formulas via sheaf theories}\label{sheaf intro}


 Given any sufficiently functorial
method of measuring derived stacks, 
the preceding calculations of geometric dimensions, traces and their functoriality immediately lead to trace and character formulas. 
To formalize the functoriality needed, we will use the language of sheaf theories. 
Broadly speaking, a sheaf theory is a representation (symmetric monoidal functor out) of a correspondence category in the way
 a topological field theory is a representation of a cobordism category.\footnote{Indeed a typical mechanism to construct ``Lagrangian" field theories is as the composition of a sheaf theory with a ``classical field theory" as in~\cite{FHLT,haugsengspan}, a symmetric monoidal functor from a cobordism category to a correspondence category.}
 It provides an approach to encoding the standard operations
on coherent sheaves and $\D$-modules, developed by Gaitsgory and Rozenblyum in the book~\cite{GR} (following a suggestion of Lurie and previous versions in~\cite{koszul,indcoh,finiteness,crystals}).

We will take the target of our sheaf theories to be the linear setting of the 
symmetric monoidal $(\oo,2)$-category $\ul{dgCat}_k$ of presentable $k$-linear differential graded categories with continuous functors and natural transformations. 
(Recall from Setting~\ref{stack conventions} that for applications all stacks are assumed to be QCA or ind-inf-schemes. Also all proper maps can be replaced by ind-proper ones at no cost.)

\begin{defn}\label{sheaf def intro} A {\em sheaf theory} is a symmetric monoidal functor of $(\infty,2)$-categories
$$
\xymatrix{
\ul{\cS}:\ul{Corr}_k^{prop}\ar[r] &  \ul{dgCat}_k
}
$$ 
from correspondences of stacks (with 2-morphisms given by proper maps of correspondences) to dg categories. We denote by 
$$
\xymatrix{
\cS:Corr_k\ar[r] &  dgCat_k
}
$$ 
the underlying 1-categorical sheaf theory, i.e. the symmetric monoidal functor on $(\infty,1)$-categories obtained by forgetting noninvertible morphisms.
\end{defn}

Let us first spell out some of the structure encoded in a 1-categorical sheaf theory $\cS$.

The graph of a map of stacks $f:X\to Y$ provides a correspondence from $X$ to $Y$ and a correspondence from $Y$ to $X$. We denote the respective induced maps by 
$f_{*}:\cS(X)\to \cS(Y)$ and $f^!:\cS(Y)\to \cS(X)$.
 For $\pi:X\to pt = \Spec k$, we denote by  $\omega_{X}=\pi^! k\in \cS(X)$ the 
$\cS$-analogue of the dualizing sheaf, and by
$\omega(X)=\pi_{*}\omega_{X} \in \cS(pt) = dgVect_k$ the $\cS$-analogue of  ``global volume forms". 
We
adopt traditional notations whenever possible, for example writing
$\Gamma(X, \cF)=\pi_{*}(\cF)$, for $\cF\in \cS(X)$

The functoriality of $\cS$ concisely encodes base change for $f_{*}$ and $f^!$. Its symmetric monoidal structure provides equivalences
$$\cS(X\times Y)\simeq \cS(X)\ot \cS(Y),$$ as well as a symmetric monoidal structure on $\cS(X)$ for any $X$ (using pullback along diagonal maps). The 2-categorical extension $\ul{\cS}$ further encodes an identification of $f^!$ with the right adjoint of $f_*$ for $f$ proper.

Since a sheaf theory $\cS$ is symmetric monoidal, it is automatically compatible with dimensions and traces: for any $X\in Corr_k$,
and any endomorphism $Z\in Corr_k(X,X)$, we have
$$
\xymatrix{
\dim (\cS(X)) \simeq \cS(\dim (X))   & \Tr(\cS(Z)) \simeq \cS(\Tr(Z)) 
}$$
Let us combine this with the calculation of the right hand sides and highlight specific examples of interest.

\begin{prop}\label{intro HH description} 
Fix a sheaf theory $\cS:Corr_k\to dgCat_k$.

(1) The $\cS$-dimension $\dim(\cS(X)) =HH_*(\cS(X))$ of any $X\in Corr_k$  is $S^1$-equivariantly equivalent with $\cS$-global volume forms on
the loop space 
$$
\dim(\cS(X))\simeq \omega(\cL X).
$$

 In particular,  for $G$ an affine algebraic group, characters
of $\cS$-valued $G$-representations are adjoint-equivariant $\cS$-global volume forms
 $$\dim(\cS(BG))\simeq \omega(G/G)
 $$

(2)  The $\cS$-trace of any endomorphism $Z\in Corr_k(X,X)$ is equivalent to $\cS$-global volume forms
on the restriction to the diagonal
$$
\Tr(\cS(Z)) \simeq \omega(Z|_{\Delta})
$$

 In particular, the $\cS$-trace of a self-map $f:X\to X$  is equivalent to $\cS$-volume forms on the $f$-fixed point locus
$$
\Tr(f_{*})\simeq \omega(X^f)
$$
\end{prop}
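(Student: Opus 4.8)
The plan is to derive everything formally from three ingredients: the general principle that a symmetric monoidal functor preserves the (naive) duality data defining dimensions and traces, the geometric computations of $\dim$ and $\Tr$ already carried out in $Corr_k$, and an explicit unwinding of $\cS$ on self-correspondences of the point. Since $X$ is self-dual in $Corr_k$ and $\cS$ is symmetric monoidal, $\cS(X)$ is automatically dualizable in $dgCat_k$; by the computation of dimensions in $dgCat_k$ its dimension is $HH_*(\cS(X))$, which supplies the left-hand equalities. The content is then the two displayed equivalences preceding the statement, $\dim(\cS(X))\simeq\cS(\dim(X))$ and $\Tr(\cS(Z))\simeq\cS(\Tr(Z))$: because dualizability, the maps $\eta_X,\epsilon_X$, and the trace composition are \emph{local} data (Remarks~\ref{dim local} and~\ref{functoriality local}), it suffices to observe that $\cS$ carries the local duality package of $X$ in $Corr_k$ to that of $\cS(X)$ in $dgCat_k$, hence carries the composite defining the trace to the corresponding composite. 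No higher coherences are required.

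Given this, I would invoke the geometric Propositions identifying $\dim(X)\simeq\cL X$ and $\Tr(Z)\simeq Z|_{\Delta}=Z\times_{X\times X}X$, each regarded as a self-correspondence $pt\leftarrow W\to pt$ of the unit. The key remaining computation is the evaluation of $\cS$ on such a self-correspondence. Decomposing $pt\xleftarrow{\pi}W\xrightarrow{\pi}pt$ as the composite of the correspondence $(pt\xleftarrow{\pi}W\xrightarrow{=}W)$, which $\cS$ sends to $\pi^!$, with $(W\xleftarrow{=}W\xrightarrow{\pi}pt)$, which $\cS$ sends to $\pi_*$, the functoriality of $\cS$ identifies the induced endofunctor of $\cS(pt)=dgVect_k$ with $\pi_*\pi^!$. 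Under the standard identification of endomorphisms of the unit $dgVect_k$ with complexes via evaluation at $k$, this endofunctor corresponds to the complex $\pi_*\pi^!k=\pi_*\omega_W=\omega(W)$. Taking $W=\cL X$ then proves (1) and $W=Z|_{\Delta}$ proves (2); the ``in particular'' cases follow by substituting the earlier identifications $\cL BG\simeq G/G$ and $\Tr(\Gamma_f)\simeq X^f$.

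The main obstacle is the $S^1$-equivariance asserted in (1). The underlying equivalence $\dim(\cS(X))\simeq\omega(\cL X)$ is entirely naive, but its $S^1$-equivariance compares two a priori different circle actions: Connes' cyclic structure on $HH_*(\cS(X))=\dim(\cS(X))$ on the one hand, and the loop-rotation action on $\cL X=\Map(S^1,X)$ (which induces the action on $\omega(\cL X)$) on the other. Matching these goes beyond the $1$-categorical trace formalism and requires the enhanced circle action on dimensions of dualizable objects furnished by the cobordism hypothesis, together with its compatibility with symmetric monoidal functors; one then checks that this abstract action is transported by $\cS$ to the geometric loop-rotation action, as recorded in Remark~\ref{rem: cyclic}. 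I expect this reconciliation of the two $S^1$-structures, rather than any of the underlying object-level identifications, to be the delicate point.
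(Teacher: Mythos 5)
Your argument is correct and is essentially the paper's own: both deduce $\dim(\cS(X))\simeq\cS(\dim(X))$ and $\Tr(\cS(Z))\simeq\cS(\Tr(Z))$ from monoidality, feed in the geometric identifications $\dim(X)\simeq\cL X$ and $\Tr(Z)\simeq Z|_\Delta$ from Proposition~\ref{basic stack dimensions}, and evaluate $\cS$ on the resulting self-correspondence of the point as $\pi_*\pi^!k=\omega(W)$ (the paper phrases this as the base-change identification $\pi_*\Delta^!\Delta_*\pi^!\simeq\cL\pi_*\cL\pi^!$, which is the same computation done at the level of functors rather than correspondences). Your handling of the $S^1$-equivariance likewise matches the paper's, which invokes the cobordism hypothesis to factor the circle action through $X\in Corr_k$ and then cites the identification of that action with loop rotation from Remark~\ref{rem: cyclic}.
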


\begin{remark}[Local sheaf theory]\label{local sheaf theory} To apply this proposition, far less structure than  a 
full sheaf theory is required. We only need the data of the functor $\cS$ on the handful 
of objects and morphisms involved in the construction of dimensions and traces as in Remark \ref{dim local}. In particular,
we only need base change isomorphisms for pullback and pushforward along specific diagrams, rather than the general  base change provided by a functor out of $Corr_k$.
This is often easy to verify in practice, in particular for the examples $\cQ$, $\cQ^!$ and $\D$ (see for example~\cite{BFN} for 
the quasicoherent setting).
\end{remark}

\subsubsection{Examples of sheaf theories}
As we explain in Section~\ref{sect shvs}, the work~\cite{GR} (combined with essential results from~\cite{finiteness}) construct two sheaf theories $\cQ^!$ and $\cD$:

\medskip
$\bullet$ Theory $\cQ^!$: the theory of ind-coherent sheaves $\cQ^!(X)$. 
This is the ``large" version $\cQ^!(X)=\Ind \mathit{Coh}(X)$
of the category of coherent sheaves, which by definition are the compact objects in $\cQ^!(X)$. 
(For smooth $X$, ind-coherent and quasicoherent sheaves are equivalent.) 
Maps are given by the standard pushforward $f_*$ and  exceptional pullback $f^!$. 
The $\cQ^!$-dualizing sheaf is the usual  dualizing complex $\omega_X$, and (for $X$ proper) the $\cQ^!$-global volume forms are its sections
$R\Gamma(X,\omega_X)=R\Gamma(X,\cO_X)^*.$
The $K$-theory of $\cQ^!(X)$ is algebraic $G$-theory $G(X)$, the homological version of algebraic $K$-theory for potentially singular spaces suited to Grothendieck-Riemann-Roch theorems.

\medskip
$\bullet$ Theory $\D$: the theory of $\D$-modules $\D(X)$
with the standard functors $f_*$ and $f^!$. The compact objects are necessarily coherent $\D$-modules (this suffices for $X$ a scheme; see \cite{finiteness} for a characterization in the case of a stack). The $\cD$-dualizing sheaf is the Verdier dualizing complex  $\omega_X$, and the $\cD$-global volume forms (for $X$ smooth) are the Borel-Moore homology
$R\Gamma(X_{dR},\omega_X)=H_{dR}(X)^*$.

\begin{remark} More precisely,~\cite{GR} construct the sheaf theories $\cQ^!$ and $\cD$ as lax symmetric monoidal functors on a much broader class of stacks, with pullbacks allowed for arbitrary maps but pushforward only for schematic morphisms. The strictness follows from results of~\cite{finiteness}, as we explain in Section~\ref{QCA section}, as does the definition of pushforwards for arbitrary maps of QCA stacks (without the functorial apparatus of~\cite{GR} but sufficient for all the ``local" constructions we need, in the sense of Remarks~\ref{dim local},~\ref{functoriality local} and~\ref{local sheaf theory}).
\end{remark}
\medskip

\begin{remark}[Quasicoherent sheaves]  The theory of quasicoherent sheaves $X\mapsto\cQ(X)$ behaves similarly with respect to 1-categorical properties. It also defines a symmetric monoidal functor out of the $(\infty,1)-$category of correspondences of stacks, using standard pullback $f^*$ and pushforward $f_*$ functors. Assuming $X$ is perfect (in the sense of~\cite{BFN}), the compact
objects of $\cQ(X)$ form the subcategory of perfect complexes $\mathit{Perf}(X)$, and we have $\cQ(X) = \Ind\mathit{Perf}(X)$. The analog of the dualizing sheaf is the structure sheaf $\cO_X$, and the $\cQ$-``global volume forms" are the global functions $R\Gamma(X,\cO_X)$. The $K$-theory of $\cQ(X)$ is the usual algebraic $K$-theory $K(X)$. However while we have $(f^*,f_*)$ adjunction and $f^*$ preserves perfection for arbitrary morphisms, proper pushforward does not preserve perfection and we do not have proper adjunction of the form $(f_*,f^*)$ (unless we add smoothness and twisting by relative dualizing sheaves). In other words, $\cQ$ and $K$-theory are better adapted to pullback, while $\cQ^!$ and $G$-theory are better adapted to integration and character formulas.
\end{remark}

\begin{remark}[Sheaf theories in differential topology and elliptic operators] \label{Cinfty}
It is tempting to think of sheaf theories in algebraic geometry as analogues of elliptic operators or complexes in differential topology.
In particular, the theory $\cQ^!(X)$ for a smooth variety $X$ 
is a natural setting for the study of the Dolbeault
$\overline{\del}$-operator coupled to vector bundles, while the theory $\D(X)$ is similarly a natural setting
for the study of the de Rham operator $d$ coupled to vector bundles. The pushforward operation is the analogue of the index. 
In this direction, it would be  interesting to develop sheaf theories on derived manifolds, for example
$C^\infty$-schemes and stacks. Quasicoherent sheaves in the sense of Joyce \cite{joyce} are a natural
candidate. Another interesting setting is categories of elliptic complexes on manifolds. The general results below would then 
provide an approach to generalizations of the classical Atiyah-Singer and Atiyah-Bott theorems.
\end{remark}

\medskip

Let us spell out the main ingredients of Proposition~\ref{intro HH description} for our examples. Recall that for $X$ a smooth scheme, $\cL X\simeq \Spec_X \Sym(\Omega_X[1])$, and for $BG$ a classifying stack, $\cL(BG) \simeq G/G$.

\medskip
$\bullet$ Theory $\cQ$:  For $X$ a smooth scheme,  we have the HKR identification of functions on the loop space (or the Hochschild chain complex) with differential forms, $\dim(\cQ(X)) \simeq \Gamma(X,\Sym(\Omega_X[1]))$, or more generally, $\cQ$-global volume forms on $X^f$ are the coherent cohomology $\cO(X^f)$. For $BG$ a classifying stack,  $\cQ$-global volume forms on $\cL (BG)$ are the coherent cohomology $\cO(G/G)$, which for $G$ reductive are the underived invariants $\cO(T)^W$.

\medskip
$\bullet$ Theory $\cQ^!$: For $X$ smooth, we have $\cQ(X)\simeq \cQ^!(X)$, and so we recover the above descriptions.
For $X$ proper, $\cQ$-global volume forms on $\cL X$ are the dual of the Hochschild chain complex (see \cite{toly}).  

\medskip
$\bullet$ Theory $\D$: 
For $X$ a smooth scheme,  $\D$-global volume forms on $\cL X$ are the de Rham cochains $\dim(\D(X)) \simeq C^*_{dR}(X)$, or more generally, $\D$-global volume forms on $X^f$ are the de Rham cochains $C^*_{dR}(X^f)$, or equivalently those of
  the underlying underived scheme of $X^f$. For $BG$ a classifying stack,  $\D$-global volume forms on $\cL (BG)$ are the Borel-Moore homology of $G/G$.

\subsubsection{Integration formulas for traces}\label{integration section}
Now let us turn to the functoriality of dimensions and traces, which is reflected in integration of volume forms along proper maps. 

For a sheaf theory $$\uS:\uCorr^{prop}_k\to \udg_k,$$ the counit of the $(f_*,f^!)$ adjunction for a proper map $f:X\to Y$
gives rise to a canonical integration map
$$
\xymatrix{
\int_f: \omega(X)\ar[r] & \omega(Y)
}
$$
%
%
%

\begin{thm}  Fix a sheaf theory $\uS:\uCorr_k\to \udg_k$.

(1) For any proper map $f:X\to Y$, the induced map on dimensions 
$$
\xymatrix{
\dim(f_*):\dim(\cS(X))\ar[r] & \dim(\cS(Y))
}
$$
is identified ($S^1$-equivariantly) with integration along the loop map
$$
\xymatrix{
\dim(f_*)\simeq \int_{\cL f}:\omega(\cL X) \ar[r] & \omega(\cL Y)
}
$$

(2) Given a proper map $f:X\to Y$ regarded as a correspondence from $X$ to $Y$, and self-correspondences $Z\in\ul{Corr}_k(X, X)$ and $W \in\ul{Corr}_k(Y, Y)$, together with an identification 
$$
\xymatrix{
\alpha: Z  \ar[r]^-\sim  & X\times_Y W  
}$$ 
of correspondences from $X$ to $Y$,
the induced trace map is identified with integration
along the natural map
$$
\xymatrix{
\Tr(f_*,\alpha) \simeq \int_{\tau(f, s)} : \omega( Z|_{\Delta_X}) \ar[r] & \omega(W|_{\Delta_Y} )
}$$
\end{thm}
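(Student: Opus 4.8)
The plan is to deduce both statements purely formally, by transporting the earlier geometric identifications through the symmetric monoidal 2-functor $\ul\cS$, and to reduce everything to a single concrete compatibility: that $\ul\cS$ sends a proper 2-morphism to an integration (counit) map. First I would record what a proper sheaf theory buys us. Since $\ul\cS:\ul{Corr}^{pr}_k\to\ul{dgCat}_k$ is a symmetric monoidal 2-functor, it preserves dualizable objects, duality data, and—crucially—adjunctions together with their units and counits. By the geometric identification of continuous morphisms, a proper $f$ is continuous in $\ul{Corr}^{pr}_k$, so $\ul\cS(f)=f_*$ is continuous with right adjoint $f^!$ given by the image of the transpose correspondence; the defining hypothesis of a proper sheaf theory is precisely that this adjunction, compatibly with composition and base change, is the one underlying $\int_f$. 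Because the abstract maps $\dim(f_*)$ and $\Tr(f_*,\alpha)$ are built only from units, counits, evaluation/coevaluation, and the cyclicity equivalence of Proposition~\ref{cyclic trace}—i.e. they are local in the sense of Remarks~\ref{dim local} and~\ref{functoriality local}—the 2-functoriality of $\ul\cS$ intertwines them, giving
$$\dim(\ul\cS(f))\simeq\ul\cS(\dim(f)),\qquad \Tr(\ul\cS(f),\ul\cS(\alpha))\simeq\ul\cS(\Tr(f,\alpha)).$$

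Next I would feed in the purely geometric computations already established. The geometric dimension identification realizes the 2-morphism $\dim(f)$ as the loop map $\cL f:\cL X\to\cL Y$, regarded as a proper map of self-correspondences of $pt$, and the geometric trace identification realizes $\Tr(f,\alpha)$ as $\tau(f,\alpha):Z|_{\Delta_X}\to W|_{\Delta_Y}$. Combined with Proposition~\ref{intro HH description}, which gives $\dim(\cS(X))\simeq\omega(\cL X)$ and $\Tr(\cS(Z))\simeq\omega(Z|_{\Delta})$, both claims are thereby reduced to computing $\ul\cS$ on these two proper 2-morphisms.

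The heart of the argument is that computation. A proper map of spaces $g:V\to W$, viewed as a 2-morphism between the self-correspondences $V$ and $W$ of $pt$, is sent by $\ul\cS$ to a natural transformation between the endofunctors $\pi_{V*}\pi_V^!\simeq(-)\otimes\omega(V)$ and $\pi_{W*}\pi_W^!\simeq(-)\otimes\omega(W)$, hence to a map $\omega(V)\to\omega(W)$. Since $\pi_V=\pi_W\circ g$, this transformation is the counit $g_*g^!\to\Id$ of the adjunction supplied by the proper sheaf theory, pushed forward to $pt$, which is by definition the integration map $\int_g$. Applying this with $g=\cL f$ (proper because $f$ is) and with $g=\tau(f,\alpha)$ yields $\ul\cS(\cL f)\simeq\int_{\cL f}$ and $\ul\cS(\tau(f,\alpha))\simeq\int_{\tau(f,\alpha)}$. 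Chaining the three displayed equivalences then proves (1) and (2). For (1), the $S^1$-equivariance is inherited from the loop-rotation action on $\cL f$, the $S^1$-equivariance of the identifications recorded after the geometric dimension proposition, and the cobordism-hypothesis equivariance of $\dim(f_*)$.

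The hard part will be exactly this last compatibility: verifying that $\ul\cS$ carries a proper 2-morphism to the counit/integration map, equivalently that the adjunction data on $(f_*,f^!)$ packaged into the proper sheaf theory is genuinely the $\ul\cS$-image of the correspondence-level adjunction in $\ul{Corr}^{pr}_k$. Everything else is formal. I expect to verify this by hand on the handful of objects and morphisms involved, as licensed by Remark~\ref{functoriality local}: check the triangle identities for the counit and confirm that the base-change squares defining $\int_g$ are precisely those encoded by the functoriality of $\ul\cS$, so that no higher coherence data enters.
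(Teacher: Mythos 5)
Your proposal is correct in outline but takes a genuinely different route from the paper. You run everything through a symmetric monoidal $2$-functor $\ul\cS:\ul{Corr}^{pr}_k\to\ul{dgCat}_k$: such a functor preserves adjunctions and hence intertwines the abstract trace constructions, so the theorem reduces to the geometric identifications $\dim(F)\simeq \cL f$ and $\Tr(F,\alpha)\simeq \tau(f,\alpha)$ of Propositions~\ref{geometric dims} and~\ref{geometric traces} plus the claim that $\ul\cS$ sends a proper $2$-morphism of self-correspondences of $pt$ to the integration map. This is exactly the ``broad idea'' the paper states at the opening of Section~\ref{sect shvs} and then deliberately does \emph{not} follow, on the grounds that the requisite $(\oo,2)$-categorical foundations (and the extensions of $\cQ^!$ and $\D$ to them) are not available; instead the paper works with a $1$-categorical sheaf theory equipped with explicit adjunction and base-change data, unwinds $\dim(f_*)$ into the three maps $\Tr(\eta_f)$, $m$, $\Tr(\epsilon_f)$, and identifies each separately. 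The crux you correctly flag --- that the adjunction data of the proper sheaf theory really is the image of the correspondence-level adjunction, equivalently that the unit and counit of $(f_*,f^!)$ are themselves integration maps --- is precisely Proposition~\ref{integral transform prop}(3), and this is where essentially all the content of the proof lives. Note, though, that the nontrivial half there is the \emph{unit}: the identification $\epsilon_f\simeq\int_f$ is close to definitional, whereas $\eta_f\simeq\int_{\Delta_f}$ is established by verifying that $\int_{\Delta_f}$ satisfies both triangle identities, using both commuting squares of Lemma~\ref{base change identities} together with the functoriality $\int_g\circ\int_f\simeq\int_{g\circ f}$ applied to the factorization $X\to X\times_Y X\to X$ of the identity; your plan mentions only ``the triangle identities for the counit,'' so be sure to carry out this unit computation. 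What your route buys is conceptual economy and a structural source for the $S^1$-equivariance; what the paper's route buys is that it needs only the minimal, explicitly checkable data of Definition of a proper sheaf theory, which is what is actually available for $\cQ^!$ and $\D$.
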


\begin{remark}
Similarly, in the case of the theory $\cQ$ of quasicoherent sheaves, the standard adjunction $(f^*,f_*)$ leads to the evident contravariant functoriality of dimensions under arbitrary maps, given by pullback of functions on loop spaces.
\end{remark}

%
%
%
%
%
%
%

\begin{remark}[Categorified version]
For applications to categorical representation theory, in particular the geometric Langlands program,
it is interesting to have character formulas for group actions on categories. Such formulas would follow from a good formalism of ``stack theories", the higher unstable analogs of sheaf theories, such as the assignment $X\to ShvCat_k(X)$. Such stack theories could be formulated as symmetric monoidal functors $\uS:\uCorr_k\to\cA$ out of a correspondence $(\infty,2)$ (or more naturally $(\infty,3)$) category with values in a category $\cA$ such as that of module categories for $dgCat_k$.
Namely, we are interested in categorified analogues of $\cD$ and $\cQ$, taking values in the $\oo$-category $\Pr^L$ of presentable $\oo$-categories, in which we assign to a scheme or stack $X$ the $\infty$-category
of quasicoherent sheaves of module categories over $\D$ or $\cQ$.  Since such theories have not been fully constructed yet, 
we will only briefly sketch the idea.

For any stack $X$ and sheaf theory $\cS$, the category of sheaves $\cS(X)$ is naturally symmetric monoidal, and so we may
consider its $\oo$-category of (presentable, stable) module categories $\cS(X)\module$. To obtain a more meaningful geometric  theory we should sheafify this construction. For example, strong  or Harish-Chandra $G$-categories (in other words, module categories over $\D(G)$ with convolution) are identified with 
sheaves of categories over the de Rham stack of $BG$. However, in the quasicoherent case, the ``1-affineness" theorem of
Gaitsgory~\cite{1affine} identifies $\cQ(X)$-modules with sheaves of categories on $X$ for a large class of stacks
(specifically, for $X$ an eventually coconnective quasi-compact algebraic stack of finite type with an affine diagonal over a field of characteristic 0). In particular,
 $\cQ(BG)$-modules are identified with algebraic $G$-categories. 

In the quasicoherent case, the general formalism of this paper should  provide an $S^1$-equivariant equivalence $\dim(\cQ(X)\module)=\cQ(\cL X)$, identifying
the class $[\cQ(X)]$ of the structure stack with the structure sheaf $\cO(\cL X)$. In particular, the characters
of quasicoherent $G$-categories are given by $\cQ(G/G)$. The induced map on dimensions  $\dim(f_*):\dim(\cQ(X)\module)\to \dim(\cQ(Y)\module)$ 
is identified $S^1$-equivariantly with the morphism given by pushforward along the loop map
$$
\xymatrix{
\dim(f_*)=\cL f_*:\cQ(\cL X) \ar[r] &  \cQ(\cL Y)
}
$$
In particular, for an algebraic group $G$ and $G$-space $X$ with $\pi:X/G\to BG$, the character of the $G$-category
$\cQ(X/G)$ is given by the pushforward $\cL\pi_* \cO(\cL X/G)\in \cQ(G/G)$. Analogous results are expected
for strong or Harish-Chandra $G$-categories (module categories for $\D(G)$ with convolution) using the sheafification of the theory
of $\D(X)$-module categories. We hope to return to these applications in future works.
\end{remark}


\section{Traces in category theory}\label{sect cat}

\subsection{Preliminaries}
Our working setting is the higher category theory and algebra
developed by J.~Lurie~\cite{topos, HA, SAG}, see Chapter I.1 of~\cite{GR} for an excellent overview.

Throughout what follows, we will fix once and for all a symmetric
monoidal $(\oo, 2)$-category $\cA$ with unit object $1_\cA$. By forgetting non-invertible 2-morphisms we 
obtain a symmetric monoidal $(\oo,1)$-category $f(\cA)$, which we will abusively refer to as $\cA$
whenever only invertible higher morphisms are involved. Conversely, 
given a symmetric monoidal $(\oo, 1)$-category $\cC$, we can always
regard it as a symmetric monoidal $(\oo, 2)$-category $i(\cC)$ with
all $2$-morphisms invertible.\footnote{One can understand the above two operations as forming an adjoint pair $(i, f)$.}
Thus developments for higher $\oo$-categories equally
well apply to the more familiar $(\oo,1)$-categories. In what follows, noninvertible 2-morphisms only play a significant role starting with Section~\ref{dim functorial section}.

We will use $\otimes$ to denote the symmetric monoidal
structure of $\cA$.
We will
write $\Omega\cA= \End_\cA(1_\cA)$ for the ``based loops" in $\cA$, or in other words,
the symmetric monoidal $(\oo, 1)$-category of endomorphisms of
the monoidal unit $1_\cA$. Note that the monoidal unit $1_{\Omega\cA}$ is
nothing more than the identity $\id_{1_\cA}$ of the monoidal unit
$1_\cA$.  

\begin{example}[Algebras] Fix a symmetric monoidal $(\oo, 1)$-category $\cC$,
and let $\cA = \Alg(\cC)$ denote the Morita $(\oo,2)$-category of
algebras, bimodules, and intertwiners of bimodules within $\cC$.
The
forgetful map $\cA = \Alg(\cC) \to \cC$ is symmetric monoidal, and in
particular, the monoidal unit $1_\cA$ is the monoidal unit $1_\cC$
equipped with its natural algebra structure.
 Finally,
we have $\Omega\cA\simeq \cC$.  

For a specific example, one could take  $\cC =k\module=dgVect_k$ the $(\oo,
1)$-category of complexes of $k$-modules (with quasi-isomorphisms inverted). Then $\cA = \Alg(\cC)$ is the
$(\oo, 2)$-category of $k$-algebras, bimodules, and 
intertwiners of bimodules.
\end{example}

\begin{example}[Categories] A natural source of $(\oo,2)$-categories
is given by various theories of $(\oo,1)$-categories.  For example,  one could consider $\udg_k$, the $(\oo,2)$-category of
$k$-linear stable presentable $\oo$-categories (or $k$-linear presentable dg categories), $k$-linear
continuous functors, and natural transformations.

Observe that $\Alg(k\module)$ is a full subcategory of $\udg_k$,
via the functor assigning to a $k$-algebra its stable presentable
$\oo$-category of modules. The essential image
consists of dg categories admitting a  compact
generator.
\end{example}


\subsection{Dualizability}

\begin{defn}
An object $A$ of the symmetric monoidal $(\oo, 2)$-category $\cA$ is said to be {\em dualizable} (equivalently, $A$ is dualizable in the $(\oo,1)$-category
$f(\cA)$) if it admits a monoidal dual: there is a dual object $A^\vee \in \cA$
and evaluation and coevaluation morphisms
$$
\xymatrix{
\epsilon_A:A^\vee \otimes A\ar[r] & 1_\cA
&
\eta_A:1_\cA \ar[r] & A \otimes A^\vee
}$$
such that the usual compositions are naturally equivalent to the identity morphism
$$
\xymatrix{
A \ar[rr]^-{\eta_A \otimes \id_A} && A \otimes A^\vee \otimes A
\ar[rr]^-{ \id_A\otimes \epsilon_A} && A 
&
A^\vee \ar[rr]^-{ \id_{A^\vee} \otimes \eta_A} && A^\vee \otimes A \otimes A^\vee 
\ar[rr]^-{ \epsilon_A\otimes  \id_{A^\vee}} && A^\vee 
}
$$
\end{defn}

\begin{example}
Any algebra object $A\in \Alg(\cC)$ is {dualizable} with dual the opposite algebra $A^{op}\in \Alg(\cC)$.
The evaluation morphism 
$$
\xymatrix{
\epsilon_A: A^{op} \otimes A \ar[r] & 1_\cC
}
$$
is given by $A$ itself regarded as an $A$-bimodule.
The coevaluation morphism 
$$
\xymatrix{
\eta_A:  1_\cC \ar[r] &A \otimes A^{op} 
}
$$
is also given by $A$ itself regarded as an $A$-bimodule.
 
\end{example}

\subsubsection{Dualizable morphisms}\label{dualizable morphisms}

Consider two objects $A, B\in \cA$, and a morphism
$$
\xymatrix{
\Phi:A \ar[r] & B.
}$$

\begin{example}
If $\cA = \Alg(\cC)$, then $\Phi$ is simply an $A^{op}\otimes B$-module.
\end{example}

If $B$ is dualizable with dual $B^{\vee}$, we can package $\Phi$ in the 
equivalent form of the morphism $$
\xymatrix{
e_{\Phi}:B^{\vee} \otimes A \to 1_\cA}$$ defined by

$$\xymatrix{  
 B^\vee\ot A  \ar[d]_-{\id_{B^\vee} \ot \Phi} \ar[r]^-{e_\Phi} & 1_{\cA}\\
 B\ot B^\vee \ar[ur]^-{\epsilon_B} & }$$

If $A$ is dualizable with dual $A^{\vee}$, we can package $\Phi$ in the 
equivalent form of the morphism $$\xymatrix{
u_\Phi:1_\cA\to B\ot A^\vee}$$ defined by
$$\xymatrix{ & A\ot A^\vee\ar[d]^-{\Phi\otimes \id_{A^\vee}}\\
1_\cA\ar[ur]^-{\eta_A} \ar[r]^-{u_\Phi} & B\ot A^\vee}$$

If both $A$ and $B$ are dualizable, 
we can also encode $\Phi$ by its dual morphism 
$$
\xymatrix{
\Phi^\vee:B^\vee\ar[r] & A^\vee
}$$ 
defined by 
$$\xymatrix{ B^\vee  \ar@/_2pc/^-{\Phi^\vee}[rrrrrr]  \ar[rr]^-{\id_{B^\vee}\otimes \eta_A} && B^\vee \ot A\ot A^\vee 
\ar[rr]^-{\id_{B^\vee} \otimes \Phi\otimes \id_{A^\vee}}
&& B^\vee \ot B\ot A^\vee \ar[rr]^-{\epsilon_{B^\vee}\ot\id_{A^\vee}} && A^\vee}
$$
There is a natural composition identity 
$$
(\Phi\Psi)^\vee\simeq \Psi^\vee \Phi^\vee
$$
Note that for fixed $A, B$,  the construction $\Phi\mapsto \Phi^\vee$ naturally defines a {covariant} map
$$
\xymatrix{
(-)^\vee:\Hom(A,B)\ar[r]& \Hom(B^\vee,A^\vee)
}
$$ 
and in particular a morphism $\Phi_1\to \Phi_2$
induces a natural morphism $\Phi_1^\vee\to \Phi_2^\vee$.

Let us record the canonical equivalences encoded by the following commutative diagrams

\begin{equation}\label{standard identities}
\xymatrix{ & A\ot A^\vee\ar[d]^-{\Phi\otimes \id_{A^\vee}}                 &&&  
A^\vee\ot A \ar[dr]^-{\epsilon_{A^\vee}} &\\
1_\cA\ar[ur]^-{\eta_A} \ar[dr]_-{\eta_B} \ar[r]^-{u_\Phi} & B\ot A^\vee &&& 
B^\vee\ot A \ar[u]^-{\Phi^\vee\ot \id_A} \ar[r]^-{e_\Phi}\ar[d]_-{\id_{B^\vee} \ot \Phi} & 1_{\cA}\\
& B\ot B^\vee \ar[u]_-{\id_B\ot \Phi^\vee}                                                &&&
B\ot B^\vee  \ar[ur]_-{\epsilon_B} & }
\end{equation}

\begin{example} In the setting of algebras, bimodules and intertwiners, the morphisms $\Phi$, $u_\Phi$, $e_\Phi$ and $\Phi^\vee$ are all different manifestations of the same bimodule $\Phi$, making their various compatibilities particularly evident.
\end{example}

\begin{defn}
(1) A morphism $\Phi: A\to B$ is said to be {\em left dualizable}
 if it admits a left adjoint: there is a morphism $\Phi^\ell: B\to A$
and unit and counit morphisms
$$
\xymatrix{
\eta_ \Phi: \id_B \ar[r] & \Phi \circ \Phi ^\ell
&
\eps_\Phi : \Phi ^\ell \circ \Phi\ar[r] & \id_A
}$$
satisfying the usual identities.

(2) A morphism $\Phi: A\to B$ is said to be {\em right dualizable}
if it admits a right adjoint: there is a morphism $\Phi^r: B\to A$
and unit and counit morphisms
$$
\xymatrix{
\eta_ \Phi: \id_A \ar[r] & \Phi^r \circ \Phi 
&
\eps_\Phi : \Phi  \circ \Phi^r\ar[r] & \id_B
}$$
satisfying the usual identities. 

\end{defn}
%

\begin{remark}
If $A$ and $B$ are dualizable, and $\Phi: A\to B$ is left (resp.~right)  dualizable, then $\Phi^\vee: B^{\vee}\to A^{\vee}$ is right (resp.~left) dualizable with right adjoint 
$(\Phi^\ell)^{\vee}:A^{\vee}\to B^{\vee}$ (resp.~left adjoint $(\Phi^r)^{\vee}:A^{\vee}\to B^{\vee}$).
\end{remark}

%


\subsection{Traces and dimensions}
(We continue to refer to~\cite{TV, HSS} for thorough treatments of the theory of traces in higher category theory.)

Let $A\in \cA$ be a dualizable object with dual $A^{\vee}$.  Consider an
endomorphism
$$
\xymatrix{
\Phi:A \ar[r] & A
}$$
Since $A$ is dualizable, $\Phi$ has a trace defined as follows. 

\begin{defn}\label{trace1}

(1) The {\em trace} of $\Phi: A\to A$ is the object $\Tr(\Phi)\in
\Omega\cA$ defined by
$$\xymatrix{1_{\cA}\ar[r]^-{\eta_A}\ar@/_2pc/@{-}^-{\Tr(\Phi)}[rrrr]& A\ot A^\vee \ar[rr]^-{\Phi\ot\id_A}& &  A\ot A^\vee
\ar[r]^-{\epsilon_A}& 1_\cA}.$$

Given a natural transformation $\varphi:\Phi\to \Psi$, we define the induced morphism
$$
\xymatrix{
\Tr(\varphi):\Tr(\Phi)\ar[r] & \Tr(\Phi')
}
$$ by applying $\varphi\ot\id_{A^{\vee}}$ to the middle arrow above.

(2) The {\em dimension} (or \em{Hochschild homology}) of $A$ is the trace of the identity
$$
\dim(A)=\Tr(\id_A)\in \Omega\cA$$
or in other words, 
 the object defined by
$$\xymatrix{ 1_\cA\ar[rr]^-{\eta_A}\ar@/_2pc/@{-}^-{\dim(A)}[rrrr] &&
  A\ot A^\vee \ar[rr]^-{\epsilon_A}&& 1_\cA}$$

\end{defn}

\begin{remark}
Equivalently, we can describe the trace as the composition
$$
\xymatrix{1_{\cA}\ar[r]^-{\Phi}& \End(A)\ar[r]^-{\sim} &  A\ot A^{\vee}
\ar[r]^-{\epsilon_A}& 1_\cA}$$ where the middle arrow is the
identification deduced from the dualizability of $A$.
\end{remark}

\begin{remark}
Observe that  for fixed dualizable $A\in \cA$, taking traces gives a functor
$$
\xymatrix{
\Tr:\End(A) \ar[r] & \Omega\cA
}$$
\end{remark}

\begin{remark}
Observe that for any dualizable endomorphism $\Phi$,
the standard identities encoded by Diagrams~\ref{standard identities} give rise to an identification
$$
\Tr(\Phi)\simeq \Tr(\Phi^\vee)
$$ 
\end{remark}

\begin{example}
When $A = 1_\cA$ is the monoidal unit, and $\Phi:1_\cA \to 1_\cA$ is an endomorphism,
we have an evident equivalence of endomorphisms  
$$
\Tr(\Phi) \simeq \Phi
$$
\end{example}

\begin{thm}[\cite{TFT}]\label{thm lurie S1-action} There is a canonical   $S^1$-action on the dimension $\dim(A)$ of any
dualizable object $A$ of a symmetric monoidal $\oo$-category $\cA$.
\end{thm}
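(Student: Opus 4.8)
The plan is to realize $\dim(A)$ as the partition function on the circle of the one‑dimensional framed topological field theory attached to $A$, and then to produce the $S^1$‑action by transporting the rotational symmetry of the circle through this field theory. This is exactly the one‑dimensional case of the cobordism hypothesis, and the argument below is an unwinding of the relevant part of~\cite{TFT}; the one‑dimensional case is elementary, since the framed bordism category is essentially free on a single dualizable generator.

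First I would invoke the cobordism hypothesis in dimension one: symmetric monoidal functors from the framed bordism category $\on{Bord}_1^{fr}$ to $\cA$ are identified with dualizable objects of $\cA$. Concretely, a choice of duality data $(A^\vee,\eta_A,\epsilon_A)$ is precisely what is needed to extend the assignment $(+)\mapsto A$ of the positively framed point to a functor $Z_A:\on{Bord}_1^{fr}\to\cA$. Under $Z_A$ the coevaluation $\eta_A$ and evaluation $\epsilon_A$ are the images of the two elbow bordisms, and the framed circle $S^1$, being their composite, is sent to $Z_A(S^1)\simeq\dim(A)$, an object of $\Omega\cA=\End_\cA(1_\cA)$. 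Since $\dim$ depends only on the restriction of $Z_A$ to endomorphisms of the monoidal unit, it suffices to work with the induced map of spaces $Z_A:\Map_{\on{Bord}_1^{fr}}(\emptyset,\emptyset)\to\Omega\cA$.

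Next I would compute the automorphisms of the circle in the source and transport them. The mapping space $\Map_{\on{Bord}_1^{fr}}(\emptyset,\emptyset)$ is the moduli space of closed framed $1$‑manifolds, whose connected component containing $S^1$ is the classifying space $B\on{Diff}^{fr}(S^1)$ of framing‑preserving diffeomorphisms. Rotation identifies $S^1$ with $\on{Diff}^{fr}(S^1)$ up to homotopy, so this component is a copy of $BS^1$ based at the circle. Composing the inclusion $BS^1\hookrightarrow\Map_{\on{Bord}_1^{fr}}(\emptyset,\emptyset)$ with $Z_A$ yields a pointed map of spaces $BS^1\to\Omega\cA$ carrying the basepoint to $\dim(A)$; by definition such a pointed map \emph{is} an $S^1$‑action on the object $\dim(A)\in\Omega\cA$. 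Naturality in $A$ and compatibility with the symmetric monoidal structure follow from the corresponding properties of the equivalence furnished by the cobordism hypothesis.

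The main obstacle is concentrated entirely in the input from~\cite{TFT}: establishing the one‑dimensional cobordism hypothesis and, within it, identifying the automorphism space of the framed circle with $S^1$ rather than a larger or disconnected group. The delicate bookkeeping is of framings, since one must check that the rotation action preserves the relevant framing on $S^1$ up to coherent homotopy, which is what singles out $\on{Diff}^{fr}(S^1)\simeq S^1$. As a consistency check in the linear setting $\cA=dgCat_k$, where $\dim(A)=HH_*(A)$, one may instead present $\dim(A)$ as the realization of the cyclic bar construction; the cyclic structure then supplies the $S^1$‑action directly via Connes' operator, and one verifies that it agrees with the action produced above.
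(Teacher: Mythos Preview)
The paper does not give its own proof of this theorem; it is stated with the attribution \cite{TFT} and used as a black box, with the surrounding text emphasizing that the cyclic symmetry of traces and the $S^1$-action on dimensions are instances of the cobordism hypothesis. Your proposal is precisely an unwinding of the argument the citation points to: the one-dimensional cobordism hypothesis identifies dualizable objects with framed one-dimensional field theories, $\dim(A)$ is the value on the framed circle, and the rotation action of $S^1$ on the framed circle transports to the asserted action. So there is nothing in the paper to compare against beyond the bare citation, and your sketch is faithful to the intended source.

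One small comment: your identification $\on{Diff}^{fr}(S^1)\simeq S^1$ is correct for the framing that actually arises here (the one obtained by composing the elbow bordisms, i.e.\ the Lie group framing), but you should be explicit that it is this framing you mean, since the statement is sensitive to which framed circle one takes. With that caveat your outline is sound.
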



\subsubsection{Cyclic symmetry}

\begin{prop}\label{cyclic trace} 
Given two morphisms $$\xymatrix{A  \ar@<+.5ex>[r]^\Phi &
  \ar@<+.5ex>[l]^{\Psi}B}$$ 
between dualizable objects  $A, B\in \cA$, there is a canonical equivalence
$$
\xymatrix{
m(\Phi, \Psi):\Tr(\Phi\circ \Psi) \ar[r]^-\sim & \Tr(\Psi \circ \Phi)
}
$$ 
functorial in morphisms of  both $\Phi$ and $\Psi$.
\end{prop}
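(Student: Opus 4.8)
The plan is to realize both sides as two different contractions of a single morphism $S\colon 1_\cA\to 1_\cA$ that is built symmetrically from $\Phi$ and $\Psi$. This is the algebraic incarnation of the string-diagram picture in which $\Tr(\Phi\circ\Psi)$ is a circle carrying beads $\Phi,\Psi$ and cyclicity is rotation of the circle; the point is that the two traces differ only in \emph{which} object's duality data $(\eta,\epsilon)$ is used to close the loop. Recall from Definition~\ref{trace1} that $\Tr(\Phi\circ\Psi)$ is computed from $(\eta_B,\epsilon_B)$ while $\Tr(\Psi\circ\Phi)$ is computed from $(\eta_A,\epsilon_A)$, and that the proposition asserts this choice is immaterial. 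Using the packaged forms of Section~\ref{dualizable morphisms}, I would set
\[
S=(\epsilon_A\ot\epsilon_B)\circ\sigma\circ(u_\Phi\ot u_\Psi)\colon 1_\cA\to 1_\cA,
\]
where $u_\Phi\colon 1_\cA\to B\ot A^\vee$ and $u_\Psi\colon 1_\cA\to A\ot B^\vee$, and $\sigma$ is the symmetry isomorphism that reshuffles the four tensor factors so that each dual sits beside its partner, allowing $\epsilon_A$ and $\epsilon_B$ to be applied. By construction $S$ is manifestly symmetric under the simultaneous swap $(\Phi,A)\leftrightarrow(\Psi,B)$.

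First I would show $S\simeq\Tr(\Phi\circ\Psi)$ by collapsing the $A$-pairing. Since $u_\Phi=(\Phi\ot\id_{A^\vee})\circ\eta_A$, the evaluation $\epsilon_A$ pairs the bare $A^\vee$ coming from $\eta_A$ against the $A$-factor produced by $u_\Psi$ (which is $\Psi$ applied to a $B$ created by $\eta_B$). Naturality of the braiding lets one pull $\Phi$ outside, and a single application of the triangle (zig-zag) identity for $A$ then transports this factor through, leaving precisely the composite $\eta_B$, then $\Phi\circ\Psi$ on the $B$-factor, then $\epsilon_B$ --- that is, $\Tr(\Phi\circ\Psi)$. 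Collapsing the $B$-pairing instead, via the corresponding triangle identity for $B$ together with the compatibilities recorded in Diagram~\ref{standard identities}, gives $S\simeq\Tr(\Psi\circ\Phi)$ by the symmetric computation. The equivalence $m(\Phi,\Psi)$ is then the composite of these two identifications through $S$.

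Functoriality in both arguments comes for free from this construction. The passage $\Phi\mapsto u_\Phi$ is covariantly functorial in $\Phi$ --- a natural transformation $\Phi\to\Phi'$ induces $u_\Phi\to u_{\Phi'}$, exactly as for $\Phi\mapsto\Phi^\vee$ in Section~\ref{dualizable morphisms} --- and likewise for $\Psi\mapsto u_\Psi$. Since $S$ is obtained from $u_\Phi$ and $u_\Psi$ by tensoring and composing with the fixed maps $\sigma,\epsilon_A,\epsilon_B$, it is functorial in each variable, and the two reductions are natural, so $m(\Phi,\Psi)$ is functorial in morphisms of both $\Phi$ and $\Psi$.

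The main obstacle is bookkeeping rather than anything conceptual: one must pin down the reshuffling $\sigma$ and verify that the braidings introduced when matching $\epsilon_A\colon A^\vee\ot A\to 1_\cA$ (and $\epsilon_B$) against the outputs of the coevaluations are compatible on the two sides, so that the two reductions genuinely land on the traces as written in Definition~\ref{trace1}, including the correct placement of $A$ versus $A^\vee$. By the naive character of dualizability in the $\oo$-categorical setting emphasized earlier, all of this can be checked in the homotopy category using only the triangle identities and the standard compatibilities of Diagram~\ref{standard identities}; no higher coherence data is needed, so the equivalence and its functoriality are determined by the finite diagram of maps just described.
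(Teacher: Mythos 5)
Your proof is correct, but it is organized differently from the one in the paper. The paper's argument never forms the four-fold tensor product $B\ot A^\vee\ot A\ot B^\vee$: instead it writes out the two composites $\epsilon_A\circ(\Psi\ot\id)\circ(\Phi\ot\id)\circ\eta_A$ and $\epsilon_B\circ(\Phi\ot\id)\circ(\Psi\ot\id)\circ\eta_B$ as the top and bottom rows of a single ladder-shaped diagram and interpolates between them using the vertical maps $\id\ot\Psi^\vee$, with the two end ``diamonds'' filled by the packaged identities $u_\Psi=(\id_A\ot\Psi^\vee)\circ\eta_A\simeq(\Psi\ot\id_{B^\vee})\circ\eta_B$ and $e_\Psi=\epsilon_A\circ(\Psi^\vee\ot\id_A)\simeq\epsilon_B\circ(\id_{B^\vee}\ot\Psi)$ recorded in Diagrams~\ref{standard identities}, and the middle square filled by the interchange law. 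Your route instead introduces the manifestly symmetric composite $S=(\epsilon_A\ot\epsilon_B)\circ\sigma\circ(u_\Phi\ot u_\Psi)$ and identifies it with each trace by collapsing one dual pair via a triangle identity; the underlying homotopy is the same string-diagram rotation, but your decomposition avoids the dual morphism $\Psi^\vee$ entirely, which slightly simplifies the functoriality in $\Psi$ (you need only covariance of $\Psi\mapsto u_\Psi$ rather than of $\Psi\mapsto\Psi^\vee$), at the cost of having to manage the shuffle $\sigma$ and the extra tensor factors. Both arguments use only the data permitted by the ``naive'' dualizability discussion, and your verification that each reduction lands on the trace exactly as in Definition~\ref{trace1} (including the implicit braidings in front of $\epsilon_A$ and $\epsilon_B$) is the right thing to pin down; I see no gap.
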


\begin{proof}

We construct $m(\Phi,\Psi)$ following the commutative diagram below:

$$\xymatrix{ & A\ot A^\vee\ar[r]^-{\Phi\otimes \id_{A^\vee}}     \ar[ddr]^-{\id_{A} \ot \Psi^\vee}       & B\ot A^\vee
 \ar[r]^-{\Psi\ot \id_{A^\vee}} \ar[ddr]^-{\id_{B} \ot \Psi^\vee}  &  A\ot A^\vee 
\ar[dr]^-{\epsilon_{A}} &\\
1_\cA\ar[ur]^-{\eta_A} \ar[dr]_-{\eta_B} & &  && 1_{\cA}\\
& B\ot B^\vee   \ar[r]_-{\Psi\ot \id_{B^\vee}}    & A\ot B^\vee \ar[r]_-{\Phi\ot \id_{B^\vee}} &
B\ot B^\vee  \ar[ur]_-{\epsilon_B} & }$$

Following the top edge, we find the definition of $\Tr(\Psi\circ\Phi)$. Following the bottom edge, we find the definition of $\Tr(\Phi\circ \Psi)$. The identifications filling the left and right diamonds arise from the standard identities encoded by Diagrams~\ref{standard identities}.  The identification filling the central square results from the symmetric monoidal structure.  

The construction is evidently functorial for  morphisms $\Phi\to \Phi'$.  The functoriality for  morphisms $\Psi\to \Psi'$ is  similar, once one recalls that the construction $\Psi\mapsto \Psi^\vee$ is covariantly functorial in morphisms of $\Psi$.
\end{proof}

\begin{example}
Taking $\Phi= \id_A$ yields a canonical equivalence 
$$
\xymatrix{
\gamma': \id_{\Tr(\Phi')} \ar[r]^-\sim & m( \id_A, \Phi'_A) 
}
$$
and likewise, taking
$\Phi'= \id_A$ yields a canonical equivalence 
$$
\xymatrix{
\gamma: \id_{\Tr(\Phi)} \ar[r]^-\sim & m( \Phi_A, \id_A) 
}
$$
Thus taking $\Phi= \Phi' =\id_A$ yields an automorphism of the identity of the Hochschild homology
$$
\xymatrix{
 (\gamma')^{-1}\circ \gamma:\id_{\Tr(\id_A)} \ar[r]^-\sim & \id_{\Tr(\id_A)}
}
$$
called the {\em BV homotopy}.
\end{example}

\begin{remark} 
The proposition is only the initial part of the full cyclic symmetry of trace (see Remark \ref{full trace}), and the example is the lowest level structure of the
$S^1$-action on Hochschild homology (see Theorem~\ref{thm lurie S1-action}) defining  cyclic homology.
\end{remark}


\begin{lemma} \label{cyclic composition}
Given  morphisms $$\xymatrix{A\ar[r]^-{\Phi}& B\ar[r]^-{\Psi} & C\ar[r]^-{\Upsilon}& A}$$
between dualizable objects  $A, B, C\in \cA$, there is a canonical commutative diagram 
$$\xymatrix{\Tr(\Psi\Phi\Ups) \ar[rr]^-{m(\Psi,\Phi\Ups)} \ar[drr]_-{m(\Psi\Phi,\Ups)}&& \Tr(\Phi\Ups\Psi)\ar[d]^-{m(\Phi,\Ups\Psi)} \\ 
&& \Tr(\Ups\Psi\Phi)}$$

\end{lemma}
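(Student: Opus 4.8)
The assertion is the identity $m(\Phi,\Ups\Psi)\circ m(\Psi,\Phi\Ups)\simeq m(\Psi\Phi,\Ups)$ of morphisms $\Tr(\Psi\Phi\Ups)\to\Tr(\Ups\Psi\Phi)$ in $\Omega\cA$. Conceptually it expresses that the cyclic rotation of trace is compatible with the bracketing of composites: rotating the cyclic composite $A\xrightarrow{\Phi}B\xrightarrow{\Psi}C\xrightarrow{\Ups}A$ first by $\Psi$ and then by $\Phi$ agrees with rotating by the single block $\Psi\Phi$. This is the first nontrivial coherence underlying the $S^1$-action of Theorem \ref{thm lurie S1-action}, but in keeping with the naive philosophy of Remark \ref{dim local} the plan is to prove it by hand, unwinding each of the three maps into its defining pasting diagram from Proposition \ref{cyclic trace} and matching 2-cells directly.

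First I would recall the anatomy of the diagram defining $m(F,G)$ for composable $F\colon X\to Y$ and $G\colon Y\to X$: the top edge spells out $\Tr(GF)$ from $\eta_X,\epsilon_X$, the bottom edge spells out $\Tr(FG)$ from $\eta_Y,\epsilon_Y$, and the interior is filled by two side diamonds coming from the standard identities of Diagram \eqref{standard identities} (which feature the dual morphism $G^\vee$) together with a central square coming from the symmetric monoidal interchange. I would instantiate this three times: for $m(\Psi,\Phi\Ups)$ with $X=B,Y=C$, comparing the traces based at $C$ and at $B$; for $m(\Phi,\Ups\Psi)$ with $X=A,Y=B$, comparing the traces based at $B$ and at $A$; and for $m(\Psi\Phi,\Ups)$ with $X=A,Y=C$, comparing the traces based at $C$ and at $A$ directly. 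By construction the first two diagrams share the edge $\Tr(\Phi\Ups\Psi)$ based at $B$, so they glue along it into a single pasting computing the left-hand composite.

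It then remains to identify this glued pasting with the diagram for $m(\Psi\Phi,\Ups)$. On the trace edges this is routine: associativity of composition and functoriality of $\otimes$, rewriting $(\Psi\Phi)\otimes\id$ as $(\Psi\otimes\id)\circ(\Phi\otimes\id)$, reconcile the differently bracketed factorizations. The one genuine ingredient is the composition identity for duals $(\Psi\Phi)^\vee\simeq\Phi^\vee\Psi^\vee$: the side diamonds of the direct diagram are built from $\Ups^\vee$ alone, whereas those of the glued composite are built from $(\Phi\Ups)^\vee\simeq\Ups^\vee\Phi^\vee$ and $(\Ups\Psi)^\vee\simeq\Psi^\vee\Ups^\vee$, and one must verify that the surplus $\Phi^\vee$ and $\Psi^\vee$ factors recombine, with the matching copies of $\Phi$ and $\Psi$ on the intermediate $B$-edge, into further instances of Diagram \eqref{standard identities}, leaving exactly the $\Ups^\vee$ diamonds of the direct diagram. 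I expect this reassembly to be the main obstacle: the real work is the bookkeeping that the two side diamonds and two central squares of the glued diagram reorganize into the single pair of diamonds and single central square of the direct diagram with no residual 2-cell. Since every 2-cell in sight is an instance of Diagram \eqref{standard identities}, of this dual-composition identity, or of the monoidal interchange, no higher coherence data enters, and the argument stays within the elementary framework advertised in Remark \ref{dim local}.
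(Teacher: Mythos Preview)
Your proposal is correct and essentially matches the paper's proof: both instantiate the diagram of Proposition~\ref{cyclic trace} for the three cyclic rotations, stack the two ``single-step'' diagrams along the shared $B$-row, and identify the composite with the direct $m(\Psi\Phi,\Ups)$ diagram using the dual-composition identity $(\Psi\Phi)^\vee\simeq\Phi^\vee\Psi^\vee$ together with the monoidal interchange.

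The one cosmetic difference is in orientation. You instantiate Proposition~\ref{cyclic trace} literally, so the diagonal 2-cells carry the dual of the \emph{second} argument: $(\Phi\Ups)^\vee$, $(\Ups\Psi)^\vee$, and $\Ups^\vee$ respectively. This forces the ``surplus $\Phi^\vee$ and $\Psi^\vee$'' cancellation you flag as the main obstacle. The paper instead draws each $m$-diagram with the opposite orientation, so the diagonals carry the dual of the \emph{first} argument: $\Psi^\vee$ from the $C$-row to the $B$-row and $\Phi^\vee$ from the $B$-row to the $A$-row. Their vertical composite is then visibly $\Phi^\vee\circ\Psi^\vee\simeq(\Psi\Phi)^\vee$, which is exactly the diagonal for the direct diagram, and no cancellation step is needed. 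This is a bookkeeping choice, not a different argument; your reassembly would go through, but the paper's orientation makes the identification immediate.
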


\begin{proof}

We construct the desired equivalence from the following diagram:

$$\xymatrix{ & C\ot C^\vee\ar[r]^-{\Ups}           \ar[dr]^-{\Psi^\vee}
& A\ot C^\vee  \ar[r]^-{\Phi} \ar[dr]^-{ \Psi^\vee}  
& B\ot C^\vee \ar[r]^-{\Psi}\ar[dr]^-{ \Psi^\vee} 
&  C\ot C^\vee 
\ar[dr]^-{\epsilon_{C}} &
\\
1_\cA\ar[ur] \ar[r] \ar[dr]
& B\ot B^\vee   \ar[r]^-{\Psi}  \ar[dr]^-{\Phi^\vee}  & C\ot B^\vee \ar[r]^-{\Ups} \ar[dr]^-{\Phi^\vee}&
A\ot B^\vee\ar[r]^-{\Phi} \ar[dr]^-{\Phi^\vee} & B\ot B^\vee \ar[r] & 1_{\cA}\\
& A\ot A^\vee\ar[r]^-{\Phi}          
& B\ot A^\vee  \ar[r]^-{\Psi}   
& C\ot A^\vee \ar[r]^-{\Ups}
&  A\ot A^\vee 
\ar[ur]^-{\epsilon_{A}} &
}$$

The natural transformations $m(\Psi,\Phi\Ups)$   and $m(\Phi,\Ups\Psi)$ describe passage from the top row
to the middle row and from the middle to the bottom, respectively. The transformation $m(\Psi\Phi,\Ups)$
can then be identified with the transformation from the top row to the bottom given by
inserting the diagonal morphisms $\id\ot \Phi^\vee\circ\Psi^\vee$ and using standard composition identities.
\end{proof}

\subsection{Functoriality of dimension}\label{dim functorial section}

Let $\cA^{cont} \subset \cA$ denote the $(\oo, 2)$-subcategory of dualizable objects and {\em continuous} or right
dualizable morphisms (morphisms that are left duals).

%
%

\begin{defn}\label{functorial dim}
Let $\Psi:A\to B$ denote a morphism in $\cA^{cont}$ with right adjoint
$\Psi^r:B\to A$.  We define the induced morphism of dimensions
$$
\xymatrix{
\dim(\Psi): \dim(A) \ar[r] & \dim(B)
}
$$
to be the composition 

$$
\xymatrix{
\Tr(\id_A) \ar[r]^-{\eta_{\Psi}} &
\Tr(\Psi^r\circ \Psi)
\ar[rr]^-{m(\Psi^r, \Psi)} &&
\Tr( \Psi\circ \Psi^r) 
 \ar[r]^-{\eps_{\Psi}} &
\Tr(\id_B)
}
$$
\end{defn}

\begin{remark}\label{rem functorial dim}
In other words, the morphism $\dim(\Psi)$ is defined by the following diagram
$$\xymatrix{
&                & A\otimes A^\vee \ar@<-.5ex>[dd]_-{\Psi\ot \id_{A^\vee}}\ar[rrdd]^-{\epsilon_A}& &               &&&&&  \\
 & &  & &                                                                                                                          &&&&& \\
1_\cA\ar[rruu]^-{\eta_A}\ar[rr]^-{u_\Psi}\ar[rrdd]_-{\eta_B}&& 
B\otimes A^\vee  \ar@<-.5ex>[uu]_-{\Psi^r\ot\id_A} \ar[rr]^-{c_\Psi}\ar@<+.5ex>[dd]^-{\id_{B^\vee}\ot\Psi^{r\vee}}&&1_\cA    & 1_{\cA}\ar[rrrr]^-{\Tr(\Psi^r\Psi)\simeq \Tr(\Psi\Psi^r)}
\ar@/_4pc/@{-}[rrrr]^{\dim(B)} \ar@/^4pc/@{-}[rrrr]^{\dim(A)}&&&& 1_\cA \\      
 & &  & &                                      &&&&&                                                                                                               \\
 &                & B \otimes B^\vee\ar@<+.5ex>[uu]^-{\id_B\ot \Psi^\vee} \ar[rruu]_-{\epsilon_B}& &          &&&&&                                              
}
$$

Following the top and bottom edge, we find  the respective definitions of $\dim(A)$ and $\dim(B)$.
The unit $\eta_\Psi$ defines a morphism from the top edge to the top zig-zag. 
 The counit $\epsilon_\Psi$ defines a morphism from the bottom zig-zag to the bottom edge.
The passage from the top to bottom zig-zag is given by the construction $m(\Psi^r,\Psi)$ and the identification $$\Tr(\Psi^{r\vee}\circ\Psi^\vee)\simeq \Tr((\Psi\circ\Psi^r)^\vee)\simeq\Tr(\Psi\circ\Psi^r)$$ 
\end{remark}

\begin{prop}\label{tracefunctoriality}
For a diagram
$$\xymatrix{ A\ar[r]^-{\Phi} & B\ar[r]^-{\Psi} & C}$$ 
within
$\cA^{cont}$, there is a canonical equivalence
$$ \xymatrix{ \dim(\Psi\circ \Phi) \simeq \dim(\Psi) \circ \dim(\Phi)
  : \dim(A) \ar[r] & \dim(C) }
$$
\end{prop}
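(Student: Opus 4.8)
The plan is to unwind both sides through Definition~\ref{functorial dim} and reduce the claim to a single compatibility between cyclic rotation and the unit/counit of an adjunction. Writing $\Phi^r,\Psi^r$ for the right adjoints, the composite $\Psi\circ\Phi$ has right adjoint $\Phi^r\circ\Psi^r$, and the triangle identities supply canonical factorizations of the composite unit and counit,
$$\eta_{\Psi\Phi}\simeq (\Phi^r\,\eta_\Psi\,\Phi)\circ\eta_\Phi,\qquad \eps_{\Psi\Phi}\simeq \eps_\Psi\circ(\Psi\,\eps_\Phi\,\Psi^r).$$
Substituting these into the definition of $\dim(\Psi\circ\Phi)$, and recalling (Remark~\ref{rem functorial dim}) that $\dim(\Psi)\circ\dim(\Phi)$ begins with $\Tr(\eta_\Phi)$ and ends with $\Tr(\eps_\Psi)$, the first and last legs of the two composites agree. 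It therefore suffices to compare the two resulting maps $\Tr(\Phi^r\Phi)\to\Tr(\Psi\Psi^r)$: on the $\dim(\Psi)\circ\dim(\Phi)$ side this is $m(\Psi^r,\Psi)\circ\Tr(\eta_\Psi)\circ\Tr(\eps_\Phi)\circ m(\Phi^r,\Phi)$, passing through $\Tr(\id_B)$, while on the $\dim(\Psi\circ\Phi)$ side it is $\Tr(\Psi\eps_\Phi\Psi^r)\circ m(\Phi^r\Psi^r,\Psi\Phi)\circ\Tr(\Phi^r\eta_\Psi\Phi)$.

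To identify these I would first decompose the long rotation using the associativity of cyclic symmetry in Lemma~\ref{cyclic composition}. Applied with the cyclic triple $\Psi^r,\Phi^r,\Psi\Phi$ it yields the canonical equivalence
$$m(\Phi^r\Psi^r,\Psi\Phi)\simeq m(\Psi^r,\Psi\Phi\Phi^r)\circ m(\Phi^r,\Psi^r\Psi\Phi),$$
factoring the rotation through $\Tr(\Psi^r\Psi\Phi\Phi^r)$. I would then invoke the naturality of $m$ in each argument (the functoriality in morphisms of $\Phi$ and of $\Psi$ recorded in Proposition~\ref{cyclic trace}) twice. Naturality of $m(\Phi^r,-)$ along the whiskered unit $\eta_\Psi\Phi:\Phi\to\Psi^r\Psi\Phi$ slides $\Tr(\Phi^r\eta_\Psi\Phi)$ across the inner rotation, rewriting $m(\Phi^r,\Psi^r\Psi\Phi)\circ\Tr(\Phi^r\eta_\Psi\Phi)$ as $\Tr(\eta_\Psi\Phi\Phi^r)\circ m(\Phi^r,\Phi)$; dually, naturality of $m(\Psi^r,-)$ along the whiskered counit $\Psi\eps_\Phi:\Psi\Phi\Phi^r\to\Psi$ slides $\Tr(\Psi\eps_\Phi\Psi^r)$ across the outer rotation, rewriting $\Tr(\Psi\eps_\Phi\Psi^r)\circ m(\Psi^r,\Psi\Phi\Phi^r)$ as $m(\Psi^r,\Psi)\circ\Tr(\Psi^r\Psi\eps_\Phi)$. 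After these two moves the rotations $m(\Phi^r,\Phi)$ and $m(\Psi^r,\Psi)$ occupy the two ends, exactly as in $\dim(\Psi)\circ\dim(\Phi)$, and only a comparison of $2$-morphisms between endomorphisms of $B$ remains.

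That remaining comparison is precisely the interchange (Godement) law: the two $2$-morphisms $\Phi\Phi^r\to\Psi^r\Psi$ given by $(\Psi^r\Psi\,\eps_\Phi)\circ(\eta_\Psi\,\Phi\Phi^r)$ and by $\eta_\Psi\circ\eps_\Phi$ (through $\id_B$) coincide, since $\eta_\Psi:\id_B\to\Psi^r\Psi$ and $\eps_\Phi:\Phi\Phi^r\to\id_B$ act on disjoint tensor factors in the monoidal category $\End(B)$, one of which is the unit. Functoriality of $\Tr$ on $2$-morphisms then gives $\Tr(\Psi^r\Psi\eps_\Phi)\circ\Tr(\eta_\Psi\Phi\Phi^r)\simeq\Tr(\eta_\Psi)\circ\Tr(\eps_\Phi)$, and assembling the pieces produces the asserted equivalence. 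I expect the only real friction to be bookkeeping: tracking each whiskering together with its source and target and orienting each naturality square correctly, so that the two rotations genuinely migrate to the ends and the interchange identity is applied to the correct pair of $2$-morphisms. As emphasized in Remark~\ref{rem functorial dim} and the surrounding discussion, all of this is ``naive'' data checkable in the homotopy $2$-category, so no higher coherences need to be constructed by hand.
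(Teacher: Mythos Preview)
Your proposal is correct and follows essentially the same approach as the paper's proof. Both arguments use the factorization of the composite unit and counit via the composition identities for adjoints, invoke Lemma~\ref{cyclic composition} to split $m(\Phi^r\Psi^r,\Psi\Phi)$ through $\Tr(\Psi^r\Psi\Phi\Phi^r)$, apply the functoriality of $m$ in its arguments (Proposition~\ref{cyclic trace}) to slide the whiskered unit and counit past the rotations, and finish with the interchange law; the paper simply packages these same moves as the cells of a single commutative diagram rather than as the linear chain of rewritings you give.
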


\begin{proof}
The equivalence is given by filling in the following diagram
$$\xymatrix{
\dim(A) \ar[r]^-{\eta_{\Phi}} \ar[dr]^-{\eta_{\Psi}} &  \Tr(\Phi^r\Phi)  \ar[r]^-{m} \ar[d]^-{\eta_{\Psi}}  & \Tr(\Phi\Phi^r) \ar[r]^-{\epsilon_{\Phi}} \ar[d]^-{\eta_{\Psi}} &
\dim(B) \ar[d]^-{\eta_{\Psi}} \\
&\Tr(\Phi^r \Psi^r \Psi\Phi)  \ar[r]^-{m} \ar[dr]^-{m}&
\Tr(\Psi^r \Psi  \Phi \Phi^r) \ar[r]^-{\epsilon_{\Phi}} \ar[d]^-{m}& \Tr(\Psi^r \Psi ) \ar[d]^-{m}\\
&&\Tr(\Psi\Phi\Phi^r\Psi^r) \ar[r]^-{\epsilon_{\Phi}} \ar[dr]^-{\epsilon_{\Psi\Phi}}&\Tr(\Psi\Psi^r) \ar[d]^-{\epsilon_{\Psi}}\\
&&&\dim(C) 
}
$$

Along the three boundary edges, we find the definitions of $\dim(\Phi)$, $\dim(\Psi)$ and $\dim(\Psi\Phi)$ respectively. 

The two corner triangles are given by the composition identities for adjoints (for example, at  the top left, relating the adjoint of $\Phi\Psi$ with the composition of adjoints of $\Psi$ and $\Phi$).

The middle triangle is given by the identity of Lemma \ref{cyclic composition}.

The top right square is given by taking traces of the evident commutative diagram of endomorphisms
$$\xymatrix{
\Phi\Phi^r \ot \Id_{B^\vee}  \ar[d] \ar[r]&  \Id_B \ot \Id_{B^\vee} \ar[d]\\
\Phi\Phi^r \ot (\Psi^r\Psi)^\vee \ar[r]& \Id_B\ot (\Psi^r\Psi)^\vee 
}
$$
and using the canonical identification $\Tr(F)=\Tr(F^\vee)$ for any dualizable morphism.

Finally, the two remaining commuting squares are given by the functoriality of the cyclic
rotation of the trace in its two arguments. For instance, in the top left square, we may either
rotate $\Tr( \Phi^r\circ (\Id_A\circ \Phi))$ and then apply the unit $\eta_\Psi:\Id_A\to \Psi^r\Psi$
or first apply the unit and then rotate.

This concludes the construction.
 \end{proof}


Since we have an evident equivalence
$\dim(1_\cA)\simeq 1_\cA$ for the unit $1_\cA\in \cA$, we have the following specialization of
Proposition~\ref{tracefunctoriality} in which we  adopt suggestive
notation.

\begin{corollary}[Abstract Grothendieck-Riemann-Roch]
Let $A, B\in \cA^{cont}$ and $V:1_\cA\to A$ and $\pi_*:A\to B$ morphisms in
$\cA^{cont}$. Then the following diagram naturally commutes
$$ \xymatrix{ \ar[dr]_-{\dim(\pi_* V)} 1_\cA
  \ar[r]^-{\dim(V)} & \dim(A) \ar[d]^-{\dim(\pi_*)}\\ &
  \dim(B) }
$$

\end{corollary}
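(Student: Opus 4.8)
The plan is to deduce the corollary directly from Proposition~\ref{tracefunctoriality} by specializing the source object to the monoidal unit $1_\cA$. First I would verify that $1_\cA$ genuinely belongs to $\cA^{cont}$: the unit is canonically self-dual, with evaluation and coevaluation given by the unit constraints, and $\id_{1_\cA}$ is its own adjoint, so $1_\cA$ is both dualizable and continuous. Moreover, recalling the Example computing $\Tr(\Phi)\simeq\Phi$ for an endomorphism $\Phi$ of $1_\cA$, I obtain the canonical identification $\dim(1_\cA)=\Tr(\id_{1_\cA})\simeq\id_{1_\cA}=1_{\Omega\cA}$, which is precisely the equivalence $\dim(1_\cA)\simeq 1_\cA$ used implicitly in the statement of the corollary.

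With this in hand, I would apply Proposition~\ref{tracefunctoriality} to the composable pair $1_\cA\xrightarrow{V}A\xrightarrow{\pi_*}B$ in $\cA^{cont}$, taking $\Phi=V$ and $\Psi=\pi_*$. The proposition furnishes a canonical equivalence $\dim(\pi_*\circ V)\simeq\dim(\pi_*)\circ\dim(V)$ of morphisms $\dim(1_\cA)\to\dim(B)$. Transporting the source along the identification $\dim(1_\cA)\simeq 1_\cA$, the map $\dim(V)$ becomes the character morphism $1_\cA\to\dim(A)$, the composite $\dim(\pi_*V)$ becomes a morphism $1_\cA\to\dim(B)$, and the equivalence produced by the proposition is exactly the commutativity of the asserted triangle. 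In this sense the corollary carries no content beyond Proposition~\ref{tracefunctoriality} read at the unit, with the convenient relabeling of $\dim(V)$ as $[V]$.

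The one point requiring care — and what I would flag as the main, albeit modest, obstacle — is checking that the identification $\dim(1_\cA)\simeq 1_\cA$ is compatible with the full construction, i.e.\ that transporting along it carries $\dim(V)$ to the intended character without introducing an extraneous automorphism of the source (for instance one arising from the cyclic structure). Since all the data in play are local in the sense of Remarks~\ref{dim local} and~\ref{functoriality local} — only a handful of objects, the structure maps $\eta,\epsilon$, and the adjunction data $(V,V^r)$, $(\pi_*,\pi_*^r)$ — this compatibility can be settled by unwinding the defining diagram of Definition~\ref{functorial dim} with $A$ replaced by $1_\cA$ and observing that the coevaluation $\eta_{1_\cA}$ together with $\Tr(\Phi)\simeq\Phi$ collapses the relevant edge to the identity of $1_\cA$. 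No coherence data beyond what Proposition~\ref{tracefunctoriality} already supplies is needed.
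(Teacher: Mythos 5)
Your proposal is correct and is exactly the paper's argument: the corollary is stated there as a direct specialization of Proposition~\ref{tracefunctoriality} to the composable pair $1_\cA\xrightarrow{V}A\xrightarrow{\pi_*}B$, using the evident equivalence $\dim(1_\cA)\simeq 1_\cA$. Your extra care in checking that $1_\cA\in\cA^{cont}$ and that the identification $\dim(1_\cA)\simeq 1_\cA$ introduces no spurious automorphism is a reasonable elaboration of what the paper leaves implicit.
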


\begin{remark}
One can show along the same lines as the proposition that taking dimensions extends to a symmetric monoidal functor
$$ \xymatrix{ \dim:\cA^{cont}\ar[r] & \Omega\cA.  }$$ 
\end{remark}

\subsection{Functoriality of traces}

We would like to capture the functoriality for traces of arbitrary endomorphisms of
dualizable objects.  For this purpose we define a morphism between pairs 
$$
\xymatrix{A\in \cA^{cont} & \Phi_A\in \End_{\cA}(A)}
$$
of an object and an endomorphism to consist of a pair 
$$\xymatrix{ \Psi\in \Hom_{\cA^{cont}}(A,B) &
  \psi:\Psi\circ\Phi_A\ar[r]^-{\simeq} & \Phi_B\circ \Psi}
  $$ 
of a morphism and a {\em commuting structure}.

\begin{defn}\label{functorial trace}
For a morphism $$(\Psi,\psi):(A,\Phi_A)\to (B,\Phi_B)$$ as above, we define the induced morphism of traces
$$ \xymatrix{ \Tr(\Psi,\psi): \Tr(\Phi_A) \ar[r] & \Tr(\Phi_B) }
$$
to be the composition
$$
\xymatrix{
\Tr(\Phi_A) \ar[r]^-{\eta_{\Psi}} &
\Tr(\Psi^r \Psi \Phi_A) \ar[r]^-{\psi}
  & \Tr(\Psi^r \Phi_B  \Psi)
\ar[rr]^-{m(\Psi^r, \Phi_B\Psi)} &&
\Tr( \Phi_B \Psi \Psi^r) 
 \ar[r]^-{\epsilon_\Psi} &
\Tr(\Phi_B)
}
$$
\end{defn}

\begin{remark}
Note that we could alternatively define a morphism $\Tr(\Psi,\psi)$ by
applying the unit $\eta_\Psi$ to the right of $\Phi_A$, rotating the
trace in the opposite direction, and again using the counit on the
right. It is elementary to give a natural equivalence of the two
constructions using nothing more than the dualizability of $A$. 
\end{remark}
%

\begin{remark}In parallel with Remark~\ref{rem functorial dim} about the functoriality of dimensions, it is enlightening to realize the functoriality of traces as a chase through the following diagram
$$
\xymatrix{
 &                & A \otimes A^\vee\ar@<-.5ex>[dd]_-{\Psi\ot\id_{A^\vee}} \ar[rr]^-{\Phi_A\ot \id_{A^\vee}}&
& A\otimes A^\vee\ar[ddrr]^-{\epsilon_A} \ar@<-.5ex>[dd]_-{\Psi\ot\id_{A^\vee}} & &                                                           &&&&&&       \\
 && & &
 & &                                                                                                                              &&&&&&  \\
1_\cA \ar[uurr]^-{\eta_A} \ar[rr]^-{u_\Psi}\ar[ddrr]_-{\eta_B}
&& 
B\otimes A^\vee \ar@<-.5ex>[uu]_-{\Psi^r\ot\id_{A^\vee}} \ar[rr]^-{\Phi_B\ot\id_{A^\vee}}\ar@<+.5ex>[dd]^-{\id_{B^\vee}\ot \Psi^{r\vee}}&& B\otimes A^\vee \ar[rr]^-{c_\Psi}\ar@<+.5ex>[dd]^-{\id_{B^\vee}\ot \Psi^{r\vee}}\ar@<-.5ex>[uu]_-{\Psi^r\ot\id_{A^\vee}} &
&1_{\cA}                                                                                                                           & 1_{\cA} \ar[rrrrr]^-{\Tr(\Psi^r \Psi \Phi_A)\simeq}_-{\Tr(\Psi^r \Phi_B  \Psi)\simeq
\Tr( \Phi_B \Psi \Psi^r)} 
\ar@/_5pc/@{-}[rrrrr]^{\Tr(\Phi_B)} \ar@/^5pc/@{-}[rrrrr]^{\Tr(\Phi_A)}&&&&& 1_{\cA}     \\
 &&  & 
&
& &                                                                                                                               &&&&&& \\
 &                & B \otimes B^\vee\ar@<+.5ex>[uu]^-{\id_B\ot\Psi^\vee} \ar[rr]^-{\Phi_B\ot \id_{B^\vee}} & 
& B\otimes B^\vee\ar@<+.5ex>[uu]^-{\id_B\ot\Psi^\vee} \ar[uurr]_-{\epsilon_B}& &                                                                 &&&&&&
}
$$
\end{remark}

\begin{prop}
Suppose given objects $A,B,C\in \cA^{cont}$, endomorphisms $\Phi_A,\Phi_B,\Phi_C$, a  commutative diagram of continuous morphisms
$$
\xymatrix{ A\ar@/^2pc/[rrrr]^-{\Psi_{AC}}\ar[rr]^-{\Psi_{AB}} && B\ar[rr]^-{\Psi_{BC}} && C}$$ 
and commuting structures 
$$
\xymatrix{
s_{AB}:\Psi_{AB}\Phi_A\ar[r]^-\sim & \Phi_B \Psi_{AB} &
s_{BC}:\Psi_{BC}\Phi_B\ar[r]^-\sim & \Phi_C \Psi_{BC} &
s_{AC}:\Psi_{AC}\Phi_A\ar[r]^-\sim & \Phi_C \Psi_{AC}
}$$ 
with an identification $s_{AC}\simeq s_{BC} s_{AB}$.
Then there is a canonical equivalence
$$ \xymatrix{ \Tr(\Psi_{AC},s_{AC}) \simeq \Tr(\Psi_{BC},s_{BC}) \circ \Tr(\Psi_{AB},s_{AB})
  : \Tr(\Phi_A) \ar[r] & \Tr(\Phi_C) }
$$
\end{prop}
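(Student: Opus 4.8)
The plan is to mimic the proof of Proposition~\ref{tracefunctoriality}, which is exactly the present statement specialized to $\Phi_A=\Phi_B=\Phi_C=\id$ (and hence all commuting structures trivial), by carrying the three endomorphisms and the two commuting structures $s_{AB},s_{BC}$ through the same diagram chase. First I would unwind both sides via Definition~\ref{functorial trace}. The composite $\Tr(\Psi_{BC},s_{BC})\circ\Tr(\Psi_{AB},s_{AB})$ becomes a zig-zag running $\Tr(\Phi_A)\to\Tr(\Psi_{AB}^r\Psi_{AB}\Phi_A)\to\cdots\to\Tr(\Phi_B)\to\Tr(\Psi_{BC}^r\Psi_{BC}\Phi_B)\to\cdots\to\Tr(\Phi_C)$, built from four unit and counit maps, two applications of the cyclic rotation $m$, and the two commuting structures $s_{AB}$ and $s_{BC}$. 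The single map $\Tr(\Psi_{AC},s_{AC})$ is the shorter zig-zag $\Tr(\Phi_A)\to\Tr((\Psi_{BC}\Psi_{AB})^r\,\Psi_{BC}\Psi_{AB}\,\Phi_A)\to\cdots\to\Tr(\Phi_C)$, in which one uses the contravariance $(\Psi_{BC}\Psi_{AB})^r\simeq\Psi_{AB}^r\Psi_{BC}^r$ of right adjoints together with the single commuting structure $s_{AC}$.

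Next I would assemble the enlarged analogue of the grid appearing in the proof of Proposition~\ref{tracefunctoriality}, now with an extra column inserted to accommodate the endomorphisms $\Phi_\bullet\otimes\id$. Along the three boundary edges one reads off the definitions of $\Tr(\Psi_{AB},s_{AB})$, $\Tr(\Psi_{BC},s_{BC})$, and $\Tr(\Psi_{AC},s_{AC})$. The interior cells would then be filled exactly as in the dimension case: the two corner triangles by the composition identities for adjoints (relating the right adjoint of $\Psi_{BC}\Psi_{AB}$ to the composite of the adjoints of $\Psi_{AB}$ and $\Psi_{BC}$, together with the induced factorizations of $\eta$ and $\epsilon$), the central triangle by the cyclic-composition coherence of Lemma~\ref{cyclic composition}, and the remaining squares by the functoriality of $m$ in each of its two arguments together with the fact that $\Tr$ is a functor on $\End(-)$.

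The one genuinely new input — and the step I expect to be the main obstacle — is the interaction of the two commuting structures, which were absent in Proposition~\ref{tracefunctoriality} and so played no role there. Here I must supply a cell expressing that first moving $\Phi_A$ past $\Psi_{AB}$ via $s_{AB}$ and then, after pushing forward through $\Psi_{BC}$, moving the result past $\Psi_{BC}$ via $s_{BC}$, agrees with the single application of the composite commuting structure occurring inside $\Tr(\Psi_{AC},s_{AC})$. This is precisely where the hypothesis $s_{AC}\simeq s_{BC}s_{AB}$ enters. Concretely the cell is the trace of a commutative diagram of endomorphisms of $B\otimes A^\vee$ and $C\otimes A^\vee$ assembled from the $\Phi_\bullet\otimes\id_{A^\vee}$, the whiskered commuting structures, and the adjunction units, whose commutativity is exactly the coherence $s_{AC}\simeq s_{BC}s_{AB}$. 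With this cell in place the surrounding squares and triangles fill as before, the chase closes, and one obtains the asserted equivalence, functorial in all the data just as in Proposition~\ref{tracefunctoriality}.
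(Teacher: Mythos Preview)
Your proposal is correct and follows essentially the same approach as the paper: the paper's proof is simply a brief sketch stating that one expands the diagram from Proposition~\ref{tracefunctoriality}, with the only new moves being to commute the commuting structures past the cyclic symmetry $m$ and the adjunction (co)units, all of which follow from the 2-categorical interchange law. Your description is in fact more detailed than the paper's own proof, and correctly isolates the role of the hypothesis $s_{AC}\simeq s_{BC}s_{AB}$.
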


\begin{proof}
The construction is obtained from following a minor expansion of the diagram proving 
Proposition \ref{tracefunctoriality}. The additional moves needed are commuting the commuting structures past
the symmetry $m$ of the trace and the unit and counits of the adjunctions. These all follow immediately from the 2-categorical interchange law
for natural transformations.
\end{proof}

\begin{remark}\label{full trace}
The full functoriality of the trace $\Tr$ takes roughly the following form, see~\cite{HSS} for a detailed treatment.
Define
the {\em loop category} $\cL^{cont}\cA$ to be the symmetric monoidal
$\oo$-category with objects consisting of pairs $(A,\Phi_A)$ of a dualizable object $A\in \cA$ equipped with a (not necessarily continuous) endomorphism $\Phi_A$, and 
morphisms given by pairs $(\Psi,\psi)$ as above with $\Psi$ continuous. 
Taking traces to extend to a symmetric monoidal functor
$$ \xymatrix{ \Tr:\cL^{cont}\cA\ar[r] & \Omega\cA }$$ 
extending the dimension functor 
$$
\xymatrix{
\dim:\cA^{cont} \ar[r] & \Omega \cA
}$$
for constant loops $\Phi_A= \id_A$, and trivial commuting structures $\psi = \id_{\Psi}$.

In order to capture the full cyclic symmetry of the trace $\Tr$, one should further extend it to
a homotopical trace valued in $\Omega \cA$, or in other words, to the appropriate
full cyclic bar construction  (of which the above forms only the one-simplices). 
\end{remark}
.



\section{Traces in Geometry}\label{sect geom}

\subsection{Categories of correspondences}
For concreteness, we fix a base commutative ring, and  work in the symmetric monoidal $(\oo,1)$-category $Stacks_k$ of derived stacks over $\Spec k$. 
It is worth pointing out that the constructions of
this section apply in any presentable $\oo$-category with the Cartesian symmetric monoidal structure.

Let $Corr_k$ denote the symmetric monoidal $\oo$-category of correspondences in $Stacks_k$. Thus
morphisms are given by the classifying space of  correspondences 
$$
\xymatrix{X&\ar[l]\ar[r] Z & Y}
$$ 
so all higher
morphisms are isomorphisms. Composition of correspondences is given
by the derived fiber product. The based loop category 
$$\Omega Corr_k=\End_{Corr_k}(\Spec k)\simeq Stacks_k$$
is again derived stacks, regarded as self-correspondences of the point $\Spec k$.

We will also enhance $Corr_k$ to  the  symmetric monoidal $(\oo,2)$-category $\ul{Corr}_k$  where we now allow noninvertible maps of correspondences
$$\xymatrix{& Z \ar[ld]\ar[rd] \ar[dd] &\\
X & & Y\\
& W\ar[ul]\ar[ur] &}$$
In other words,
the morphisms $\ul{Corr}_k(X,Y)$ now form the $\oo$-category $Stacks_{/X\times Y}$ of stacks over $X\times Y$ with arbitrary morphisms
rather than isomorphisms as in $Corr_k(X,Y)$.

We will also have need to restrict the class of morphisms of correspondences to some subcategory
of $Stacks_{/X\times Y}$.  In particular, we will consider the subcategory $\ul{Corr}_k^{prop}$ 
in which we only allow proper maps of correspondences.


\subsection{Traces of correspondences}

Given a map $Z\to X$, it is convenient to introduce the symmetric presentation of the  based loop space 
$$\cL_Z X
= Z\times_{Z\times X} Z
$$
Note the two natural identification with the traditional based loop space
$$
\xymatrix{
  \cL X \times_X Z \simeq X \times_{X \times X} Z & \ar[l]_-\sim  Z\times_{Z\times X} Z  \ar[r]^-\sim & Z \times_{X \times X} X \simeq Z \times_X \cL X
}$$
There is a natural rotational equivalence $\cL X \times_X Z \simeq Z \times_X \cL X$  that makes the above two identifications coincide. (It does not preserve base points and is not given by swapping the factors). Thus we can unambiguously identify all of the above versions of the based loop space.

\begin{prop} \label{basic stack dimensions}

(1) 
Any derived stack $X$ is dualizable as an object of $Corr_k$, with dual $X^\vee$ identified with $X$ itself, and dimension $\dim(X)$ identified 
with the loop space 
$$
\cL X= X^{S^1}\simeq X\times_{X\times X} X
$$  regarded as a self-correspondence of $pt=\Spec k$.

(2) The transpose of any correspondence $X\leftarrow Z\to  Y$ is identified with the reverse correspondence
$Y \leftarrow Z \to  X$. The trace of  a self-correspondence $X\leftarrow Z\to X$
is identified with the based loop space
$$\xymatrix{ 
\Tr(Z)\simeq Z|_{\Delta_X} = Z\times_{X\times X} X \simeq \cL_Z X
 }$$ 
  regarded as a self-correspondence of $pt=\Spec k$.
  
  In particular, the trace of the graph $\Gamma_f \to X\times X$ of a self-map $f:X\to X$ is
identified with the fixed point locus 
$$
\xymatrix{
\Tr(f) \simeq  \Gamma_f|_{\Delta_X} = \Gamma_f \times_{X\times X} X \simeq X^f  
}$$

\end{prop}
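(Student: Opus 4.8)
The plan is to establish the self-duality of $X$ by means of the diagonal correspondences, and then to read off both the dimension and the trace as explicit derived fiber products. Throughout I will use the fact (stressed above) that dualizability in an $\oo$-category is a naive notion, checkable in the homotopy category: only equivalences, and not a full system of higher coherences, need to be produced. In particular, following Remarks \ref{dim local} and \ref{functoriality local}, every assertion involves only the finite diagram consisting of $X$, its putative dual, the diagonal correspondences, and a few of their composites.

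First I would set $X^\vee = X$ and take as coevaluation and evaluation the diagonal correspondences
\[
\eta_X:\ pt \xleftarrow{} X \xrightarrow{\Delta} X\times X,
\qquad
\epsilon_X:\ X\times X \xleftarrow{\Delta} X \to pt,
\]
each with apex $X$, one leg the structure map to $pt$ and the other the diagonal $\Delta$. To confirm dualizability I would check the two triangle identities by computing the relevant iterated derived fiber products. For the composite $(\id_X\ot\epsilon_X)\circ(\eta_X\ot\id_X)$ this is a fiber product over $X\times X\times X$ whose apex, once the equalities forced by the two diagonal legs are imposed, collapses to $X$ with both outer legs the identity; hence the composite is the identity self-correspondence of $X$, and the second triangle identity follows by the symmetric computation. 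The dimension is then immediate from Definition \ref{trace1}: $\dim(X)=\Tr(\id_X)$ is the composite $pt\xrightarrow{\eta_X} X\times X \xrightarrow{\epsilon_X} pt$, whose apex is the fiber product $X\times_{X\times X} X$ of the two diagonal maps, namely $\cL X = X^{S^1}$ (using $S^1\simeq pt\sqcup_{pt\sqcup pt} pt$), viewed as a self-correspondence of $pt$.

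For part (2), I would first unwind the construction of $\Phi^\vee$ from Section \ref{dualizable morphisms} with the diagonal (co)evaluations above; this identifies the transpose of a span $X\leftarrow Z\to Y$ with the same span read backwards, $Y\leftarrow Z\to X$. For the trace of a self-correspondence $Z$ with legs $z_1,z_2:Z\to X$, I would compute $\Tr(Z)$ as the composite $pt\xrightarrow{\eta_X} X\times X \xrightarrow{Z\ot\id_X} X\times X \xrightarrow{\epsilon_X} pt$ of Definition \ref{trace1}. Carrying out the two fiber products in turn, the intermediate span acquires right leg $z\mapsto(z_2(z),z_1(z))$, and pairing this against the final diagonal imposes $z_1(z)=z_2(z)$; the resulting apex is $Z\times_{X\times X} X = Z|_{\Delta_X}$, which the symmetric presentation recorded just before the statement identifies with $\cL_Z X$. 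Specializing to $Z=\Gamma_f$, where $(z_1,z_2)=(\id,f)$, the condition $z_1=z_2$ becomes $x=f(x)$, so $\Tr(\Gamma_f)\simeq \Gamma_f\times_{X\times X} X = X^f$.

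The main obstacle is not any individual computation but ensuring that all of these fiber-product manipulations are legitimate at the level of $\oo$-categories rather than merely on points. This is exactly where the naive character of dualizability is decisive: because we need only equivalences of the few composites involved, the verification reduces to the universal property of the derived fiber product together with the pasting and base-change coherences already packaged into the construction of $Corr_k$ as a symmetric monoidal $\oo$-category. Granting that construction, the proposition is a finite and essentially mechanical diagram chase, with the diagonal correspondences --- i.e. the canonical cocommutative coalgebra structure on each stack --- doing all of the work.
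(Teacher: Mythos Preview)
Your proposal is correct and follows essentially the same approach as the paper: both take the diagonal correspondences as (co)evaluation, verify the triangle identities via the fiber product $X\times_{X\times X\times X} X\simeq X$, and then read off $\dim(X)\simeq\cL X$ and $\Tr(Z)\simeq Z\times_{X\times X}X$ (the paper presents the latter in the equivalent symmetric form $Z\times_{Z\times X}Z$) by direct computation of the relevant composites. The only cosmetic difference is that the paper displays the transpose and trace computations as explicit tower-of-fiber-product diagrams rather than in prose.
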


\begin{proof}
The evaluation and coevaluation presenting the self-duality of $X$ are both
given by $X$ itself as a correspondence between $pt=\Spec k$ and
$X\times X$ via the diagonal map. The standard identities follow
from  the calculation of the fiber product of the two diagonal maps
$$
X{}_{\Delta_{12}}\times_{X\times X\times X}{}_{\Delta_{23}} X\simeq X
$$
Thus the dimension of $X$ is  the loop space
$$\xymatrix{&&\cL X \ar[ld]\ar[rd]&&\\
&X\ar[ld]\ar[rd]&&X\ar[ld]\ar[rd]& \\
pt&&X \times X&&pt}$$

By definition, the transpose of a correspondence $X\leftarrow Z\to  Y$ is identified with
 $Y\leftarrow Z\to  X$  by checking the definition
$$\xymatrix{&&& Z \ar[ld]\ar[rd] &&&\\
&& Y \times Z \ar[ld]\ar[rd]   && Z  \ar[ld]\ar[rd] \times X && \\
&Y \times X\ar[ld]\ar[rd]&& Y \times Z \times X \ar[ld]\ar[rd] && Y \times X\ar[ld]\ar[rd]& \\
Y&&Y \times X \times X&&Y \times Y \times X && X}$$
The trace of a self-correspondence $X\leftarrow Z\to  X$ is then calculated by the composition 
$$\xymatrix{&&& Z\times_{Z\times X} Z \ar[ld]\ar[rd] &&&\\
&&  Z\ar[ld]\ar[rd]   &&   \ar[ld]\ar[rd] Z  && \\
&X \ar[ld]\ar[rd]&&Z \times X \ar[ld]\ar[rd] &&  X\ar[ld]\ar[rd]& \\
pt&&X \times X &&X \times X && pt}$$


Finally, the case of the graph $Z=\Gamma_f$  of a  self-map
gives the fixed point locus by definition.
\end{proof}

\begin{remark}[Cyclic version]\label{rem: cyclic}
The identification $\dim(X)\simeq \cL X$ above is naturally $S^1$-equivariant for the standard loop
rotation on $\cL X$ and the cyclic symmetry of $\dim(X)$ provided by the cobordism hypothesis. 
To see this it is useful to consider $X$ as an $E_\infty$-algebra object in
$Stacks_k^{op}$ via the diagonal map (or as an $E_n$-object for any $n$).
In other words, for $n=1$ we identify stacks and correspondences with objects and morphisms in the Morita category $Alg(Stacks_k^{op})$.
It follows from the properties of topological chiral homology \cite[Theorem 5.3.3.8]{HA} that for a (constant) commutative algebra $A$
its topological chiral homology over a manifold is given by the tensoring of commutative algebras
over simplicial sets $\int_M A= M\ot A$. In particular (passing back from the opposite category to stacks)
we have $\int_{S^1} X = X^{S^1}=\cL X$. Moreover this identification holds not just for a fixed circle but over the moduli space of circles $BDiff(S^1)\sim BS^1$, i.e. equivariantly for rotation. We also know
from \cite[Example 5.3.3.14]{HA} or \cite[Example 4.2.2]{TFT} that the $S^1$-action on the dimension of an associative algebra $A$ (given classically by the cyclic structure on the Hochschild chain complex) is given by the rotation $S^1$-action on the topological chiral homology $\int_{S^1} A$, i.e., the family of topological chiral homologies over the moduli space of circles. In our case this recovers the rotation action on the loop space.
\end{remark}

\subsection{Geometric functoriality of dimension}

\begin{prop}\label{geometric adjunction} The graph $X\leftarrow \Gamma_f \to Y$ of any proper morphism  
$f: X\to Y$ gives a continuous  morphism $F:X\to Y$ in $\ul{Corr}^{prop}_k$, with right adjoint $F^r:Y\to X$ identified with the opposite correspondence $Y\leftarrow \Gamma_f \to X$. 
\end{prop}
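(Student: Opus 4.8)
The plan is to write down the adjunction $(F,F^r)$ by hand, with unit and counit given by natural maps of stacks, and to reduce the triangle identities to the cancellation of a diagonal against a projection in derived fibre products. Throughout I fix the identification $\Gamma_f\simeq X$ given by projection to the source, so that $F:X\to Y$ is the correspondence with legs $\id_X$ and $f$, and the opposite correspondence $F^r:Y\to X$ has legs $f$ and $\id_X$. Since composition in $Corr_k$ is derived fibre product, I first compute the two relevant composites. The inner fibre products collapse by the cancellations $X\times_{\id,X,\id}X\simeq X$ and $X\times_{f,Y,f}X\simeq X\times_Y X$, giving $F^r\circ F\simeq X\times_Y X$ as a self-correspondence of $X$ via its two projections $p_1,p_2$, and $F\circ F^r\simeq X$ as a self-correspondence of $Y$ via $(f,f):X\to Y\times Y$.

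Next I produce the structure maps. For the unit I take the relative diagonal $\delta_f:X\to X\times_Y X$; it is a legitimate map of correspondences $\id_X\to F^r\circ F$ because $\delta_f$ intertwines the diagonal $X\to X\times X$ with the two projections of $X\times_Y X$. For the counit I take $f:X\to Y$ itself, which is a map of correspondences $F\circ F^r\to\id_Y$ because $(f,f)=\Delta_Y\circ f$. (Both fit the self-duality of $X$ established in Proposition~\ref{basic stack dimensions}, so this is the expected candidate for a right adjoint in the sense of the definition of right dualizability.)

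I then verify the two triangle identities. Computing the threefold composite gives $F\circ F^r\circ F\simeq X\times_Y X$, with source leg $p_1$ and target leg $f\circ p_2$; tracking the derived fibre products and invoking base change identifies the whiskered unit $F\eta_F$ with $\delta_f$ and the whiskered counit $\eps_F F$ with the projection $p_1$. Their composite is $p_1\circ\delta_f=\id_X$, which is exactly $\id_F$. The second triangle identity is entirely parallel: here $\eta_F F^r$ is again $\delta_f$ and $F^r\eps_F$ is the projection $p_2$, so the composite is $p_2\circ\delta_f=\id$, namely $\id_{F^r}$. All other fillings are the standard coherences of the symmetric monoidal/fibre-product structure.

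It remains to address properness, which governs the two cases. In $\ul{Corr}_k$ every map of correspondences is an admissible $2$-morphism, so the unit $\delta_f$ and counit $f$ above are legitimate with no hypotheses on $f$, and $F$ is continuous with right adjoint $F^r$. In $\ul{Corr}^{pr}_k$ the $2$-morphisms must be proper: the counit $\eps_F=f$ is proper by hypothesis, and since a proper morphism is separated, the unit $\eta_F=\delta_f$ is a closed immersion, hence proper. Thus both structure maps are proper and the adjunction descends to $\ul{Corr}^{pr}_k$. I expect the main obstacle to be the triangle identities, specifically the bookkeeping that identifies the whiskered unit and counit with the relative diagonal and a projection; this is where the iterated derived fibre products and base change must be handled with care, after which each triangle collapses to the tautology that a projection undoes a diagonal.
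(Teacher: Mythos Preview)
Your proof is correct and follows essentially the same approach as the paper: both identify the unit with the relative diagonal $X\to X\times_Y X$, the counit with $f$ itself, and reduce the triangle identities to the tautology that a projection cancels a diagonal. Your write-up is in fact more explicit than the paper's, which leaves the verification of the standard identities as a one-line remark and does not spell out (as you do) why the unit is proper when $f$ is.
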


\begin{proof}
We construct the unit and counit of the adjunction as follows. Consider the composition $F^r F:X\to X$ of correspondences 
$$\xymatrix{&&X \times_Y X \ar[ld]\ar[rd]&&\\
&\Gamma_f\ar[ld]\ar[rd]&&\Gamma_f\ar[ld]\ar[rd]& \\
X&&Y &&X}$$
The unit
$\eta_f: \id_X=X \to F^rF\simeq X\times_Y X$ is given by the relative diagonal map.

 Consider the opposite composition of correspondences 
$$\xymatrix{&&X \times_X X \ar[ld]\ar[rd]&&\\
&\Gamma_f\ar[ld]\ar[rd]&&\Gamma_f\ar[ld]\ar[rd]& \\
Y&&X &&Y}$$
The counit $\epsilon_f:  FF^r\simeq X\to \id_Y=Y$
is given by $f$ itself. 

The standard identities are easily verified by identifying the resulting composite map
$$\xymatrix{\Gamma_f\ar[r] & \Gamma_f\times_Y \Gamma_f \times_X \Gamma_f \ar[r] & \Gamma_f}$$ 
 of correspondences with the identity.
\end{proof}

\begin{lemma}\label{geometric cyclic trace}
Let $F_Z:X\to Y$ and $F_W:Y\to X$ be morphisms in $Corr_k$ given by respective correspondences $X\leftarrow Z \to Y$ and $Y \leftarrow W \to X$. Then the canonical equivalence 
$$
\xymatrix{
m(F_W, F_Z):\Tr(F_W\circ F_Z) \ar[r]^-\sim & \Tr(F_Z\circ F_W)
}
$$ 
is given by the composition of evident geometric identifications
$$
\xymatrix{
  (Z \times_Y W) \times_{X \times X} X \ar[r]^-\sim &  W \times_{X\times Y} Z \ar[r]^-\sim & Z \times_{Y\times X} W
  \ar[r]^-\sim &  (W \times_X Z) \times_{Y \times Y} Y 
}
$$
\end{lemma}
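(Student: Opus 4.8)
The plan is to specialize the construction of the cyclic rotation map from the proof of Proposition~\ref{cyclic trace} to $Corr_k$, where every object is self-dual and every structural morphism is an explicit span, so that the abstract diagram becomes a diagram of derived fiber products whose fillings are canonical rearrangements. Concretely, I would match the abstract data by setting $A=Y$ and $B=X$, with $\Phi=F_W$ the span $Y\leftarrow W\to X$ and $\Psi=F_Z$ the span $X\leftarrow Z\to Y$. By Proposition~\ref{basic stack dimensions}(1), $A^\vee=Y$ and $B^\vee=X$, with $\eta$ and $\epsilon$ both presented by the diagonal correspondences; and by the transpose description in Proposition~\ref{basic stack dimensions}(2) the dual $\Psi^\vee:A^\vee\to B^\vee$ appearing in the diagonal maps $\id\otimes\Psi^\vee$ is just $Z$ read backwards, $Y\leftarrow Z\to X$. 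Under these identifications the bottom edge computes $\Tr(\Phi\circ\Psi)=\Tr(F_W\circ F_Z)\simeq (Z\times_Y W)\times_{X\times X}X$ and the top edge computes $\Tr(\Psi\circ\Phi)=\Tr(F_Z\circ F_W)\simeq (W\times_X Z)\times_{Y\times Y}Y$, by the trace calculation of Proposition~\ref{basic stack dimensions}(2); these are exactly the two ends of the claimed chain, and $m(F_W,F_Z)$ runs from bottom to top.

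The heart of the argument is then to read off the three fillings of the diagram as the three evident geometric identifications. First, the left diamond, whose commutativity is supplied by the standard identities of Diagrams~\ref{standard identities}, relates the $|_{\Delta_X}$-presentation of the bottom edge to a symmetric presentation: tracking which leg of each span lands in which factor and using that $\eta_X,\epsilon_X$ are diagonals, this filling becomes the rearrangement $(Z\times_Y W)\times_{X\times X}X\simeq W\times_{X\times Y}Z$ (both cut out the locus where the $X$- and $Y$-legs of $Z$ and $W$ agree). Second, the central square is filled by the symmetry of the Cartesian monoidal structure, which here is the swap $X\times Y\simeq Y\times X$ together with the interchange of the two fiber-product factors, giving $W\times_{X\times Y}Z\simeq Z\times_{Y\times X}W$. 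Third, the right diamond, again governed by Diagrams~\ref{standard identities}, is the mirror of the first step and yields $Z\times_{Y\times X}W\simeq (W\times_X Z)\times_{Y\times Y}Y$, the $|_{\Delta_Y}$-presentation of the top edge. Composing the three gives precisely the stated chain.

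Since, per Remark~\ref{dim local}, the construction of $m$ is local and uses only these objects, the diagonal structure morphisms, and the standard identities, the whole verification reduces to bookkeeping of legs of spans and is entirely explicit; each filling is a canonical associativity/commutativity isomorphism of derived fiber products, so the equivalences compose without coherence issues. The main obstacle I anticipate is exactly this bookkeeping: I must confirm that the two standard-identity diamonds produce the specific base-change rearrangements claimed, with the correct factor ordering, rather than some permuted variant. Tracking the transpose span $\Psi^\vee$ (the reversed $Z$) through the diagonal maps $\id\otimes\Psi^\vee$ is where care is needed, and one must check that the net effect of the three steps is the genuine loop rotation $\cL_{Z\times_Y W}X\simeq\cL_{W\times_X Z}Y$ flagged before Proposition~\ref{basic stack dimensions} — realized here through the central swap — rather than a naive exchange of factors. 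Once the leg identifications are pinned down, the verification is routine and the equivalences match on the nose.
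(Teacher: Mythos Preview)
Your proposal is correct and follows essentially the same approach as the paper: specialize the abstract diagram from the proof of Proposition~\ref{cyclic trace} to $Corr_k$, where all objects are self-dual via diagonals and all morphisms are explicit spans, and then read off the left diamond, central square, and right diamond as the three claimed geometric rearrangements. The paper's own proof is more terse---it simply redraws the Proposition~\ref{cyclic trace} diagram with the relevant correspondences inserted and asserts that the three commuting squares give the three equivalences---but your more detailed tracking of legs (including the role of the transpose span $\Psi^\vee$ and the symmetric-monoidal swap in the central square) is exactly the bookkeeping that justifies that assertion.
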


\begin{proof} Returning to the definition and using our previous identifications, observe that $m(F_Z, F_W)$ is calculated by commutativity of the diagram of correspondences
$$\xymatrix{ & X\times X\ar[r]^-{Z \times X}     \ar[ddr]^-{X \times W}      & Y\times X
 \ar[r]^-{W\times X} \ar[ddr]^-{Y \times W}  & X\times X
\ar[dr]^-{X} &\\
pt\ar[ur]^-{X} \ar[dr]_-{Y} & &  && pt\\
& Y\times Y   \ar[r]_-{W\times Y}    & X\times Y \ar[r]_-{X \times Y} &
Y\times Y  \ar[ur]_-{Y} & }$$

Following the top edge, we see $\Tr(F_W\circ F_Z) \simeq  (Z \times_Y W) \times_{X \times X} X$.
Following the bottom edge, we see $\Tr(F_Z\circ F_W) \simeq  (W \times_X Z) \times_{Y \times Y} Y$.
Moving from the top to bottom edge via the successive equivalences of the three commuting squares,
one finds the three successive equivalences in the assertion of the lemma.
\end{proof}

\begin{prop}\label{geometric dims} 
Suppose $f:X\to Y$ is a proper 
morphism, and  $F:X\to Y$ denotes the induced morphism  in $\ul{Corr}^{prop}_k$ given by the graph $X\leftarrow \Gamma_f \to Y$.
Then $\dim(F):\dim(X)\to\dim(Y)$ is canonically identified 
with the $S^1$-equivariant morphism $\cL f:\cL X\to \cL Y$.
\end{prop}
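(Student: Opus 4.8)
The plan is to unwind the abstract construction of $\dim(F)$ given in Definition~\ref{functorial dim} and to identify each of its three constituent arrows with an explicit map of derived fiber products. Recall that Definition~\ref{functorial dim} presents $\dim(F)$ as the composition
\[
\Tr(\id_X)\xrightarrow{\eta_F}\Tr(F^r F)\xrightarrow{m(F^r,F)}\Tr(F F^r)\xrightarrow{\epsilon_F}\Tr(\id_Y),
\]
where $F^r$ is the opposite-graph right adjoint supplied by the preceding proposition, with unit $\eta_F\colon\id_X\to F^rF$ the relative diagonal $\delta_f\colon X\to X\times_Y X$ and counit $\epsilon_F\colon FF^r\to\id_Y$ the map $f$ itself. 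First I would identify the four traces geometrically. By Proposition~\ref{basic stack dimensions}(2) we have $\Tr(\id_X)\simeq\cL X$ and $\Tr(\id_Y)\simeq\cL Y$. Composing graphs as in that proposition gives $F^rF\simeq(X\leftarrow X\times_Y X\to X)$ and $FF^r\simeq(Y\xleftarrow{f}X\xrightarrow{f}Y)$, so Proposition~\ref{basic stack dimensions}(2) yields $\Tr(F^rF)\simeq(X\times_Y X)\times_{X\times X}X$ and $\Tr(FF^r)\simeq X\times_{Y\times Y}Y$. A routine rearrangement of fiber products (using $X\times_Y X\simeq(X\times X)\times_{Y\times Y}Y$ and $(f,f)=\Delta_Y\circ f$) identifies both of these with $X\times_Y\cL Y$.

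Next I would pin down the three arrows. Since the trace of a $2$-morphism of self-correspondences is, by Definition~\ref{trace1}, simply the induced map on restrictions to the diagonal, the arrow $\Tr(\eta_F)$ is the base change of $\delta_f$ along $\Delta_X\colon X\to X\times X$, which under the above identification sends a loop $(x,\gamma)\in\cL X$ to the pair $(x,f\circ\gamma)\in X\times_Y\cL Y$. Likewise $\Tr(\epsilon_F)$ is the base change of $f$ along $\Delta_Y$, i.e.\ the projection $X\times_Y\cL Y\to\cL Y$. The middle arrow $m(F^r,F)$ is computed by Lemma~\ref{geometric cyclic trace} (with $Z=W=\Gamma_f$) as the canonical chain of identifications
\[
(X\times_Y X)\times_{X\times X}X\simeq X\times_{X\times Y}X\simeq X\times_{Y\times X}X\simeq X\times_{Y\times Y}Y,
\]
which is a (rotational) self-equivalence of $X\times_Y\cL Y$. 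Composing the three arrows then sends $(x,\gamma)$ to the loop $f\circ\gamma\in\cL Y$ based at $f(x)$, which is exactly $\cL f(x,\gamma)$; this establishes the asserted identification of $\dim(F)$ with $\cL f$.

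It remains to upgrade this to an $S^1$-equivariant identification. Here I would invoke Remark~\ref{rem: cyclic}, which matches the cyclic symmetry on $\dim(X)$ with loop rotation on $\cL X$, together with the $S^1$-equivariance of the functoriality morphism $\dim(F)$ coming from the cobordism hypothesis (as noted following the abstract Grothendieck-Riemann-Roch corollary). Since $\cL f$ is tautologically equivariant for loop rotation, one checks that the two $S^1$-actions are carried to one another through each of the three steps, which completes the argument.

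The step I expect to be the main obstacle is the basepoint and rotation bookkeeping in the middle arrow. The rotational equivalence $\cL X\times_X Z\simeq Z\times_X\cL X$ recorded just before Proposition~\ref{basic stack dimensions} is deliberately \emph{not} the swap and does not preserve basepoints, so one must verify that the chain of identifications in Lemma~\ref{geometric cyclic trace} contributes precisely the rotation needed for the composite to be the honest loop map, with no residual twist; concretely, that the output loop is based at $f(x)$ rather than at some rotated point, and that this is coherent with the $S^1$-actions. Confirming this compatibility is the only genuinely delicate point, the remaining identifications being the formal fiber-product manipulations indicated above.
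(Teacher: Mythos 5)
Your proposal is correct and follows essentially the same route as the paper: unwind $\dim(F)$ into the unit, cyclic-symmetry, and counit arrows, identify the two intermediate traces as $(X\times_Y X)\times_{X\times X}X$ and $X\times_{Y\times Y}Y$ (both the relative loop space $\cL_X Y\simeq X\times_Y\cL Y$), recognize the outer arrows as induced by the relative diagonal and by $f$, and handle the middle arrow via Lemma~\ref{geometric cyclic trace}, with $S^1$-equivariance deferred to Remark~\ref{rem: cyclic} and the cobordism hypothesis. The basepoint/rotation bookkeeping you flag as the delicate step is precisely the content the paper isolates in Lemma~\ref{geometric cyclic trace}, so your invocation of it is exactly where the paper places that burden as well.
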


\begin{proof}
Denote by $F^r:Y\to X$ the right adjoint to $F$. We must calculate 
$$
\xymatrix{
\dim(X) \ar[r] & \Tr(F^r F) \ar[rr]^-{m(F^r, F)} && \Tr(F F^r) \ar[r] & \dim(Y)
}$$
We have seen that the first and third morphisms correspond to the natural geometric maps
$$
\xymatrix{
\cL X \simeq X\times_{X \times X} X \ar[r] & (X \times_Y X) \times_{X \times X} X &
X \times_{Y \times Y} Y \ar[r] & Y \times_{Y \times Y} Y \simeq \cL Y
}
$$
induced by the relative diagonal $X\to X\times_Y X$ and given map $f:X\to Y$ respectively.
Furthermore, by Lemma~\ref{geometric cyclic trace}, the middle map is the natural geometric identification
$$
\xymatrix{
(X \times_Y X) \times_{X \times X} X \ar[r]^-\sim & 
X \times_{Y \times Y} Y}
$$

Altogether, the  composition
is easily identified with the loop map $\cL f:\cL X \to \cL Y$.
\end{proof}

\begin{remark}
%
It follows from the proposition that the loop map $\cL f:\cL X \to \cL Y$ must be proper when the given map $f:X\to Y$ is proper.
Let us note why this is true geometrically from the factorization $\cL X \to \cL_X Y \to \cL Y$ appearing in the proof. 

First, the natural morphism $\cL X\to \cL_X Y$ is the restriction
along the diagonal $X\to X\times X$ of the relative diagonal $X\to \XYX$. The relative diagonal is a closed
embedding since $f$ is proper, and hence the natural morphism $\cL X\to \cL_X Y$ is as well. 
Second, the natural  morphism $\cL_X Y
\to \cL Y$ is the restriction along the diagonal $Y\to Y\times Y$ of the proper
morphism $f:X\to Y$ and thus is proper as well. 
Altogether, we see that $\cL f:\cL X\to \cL Y$ is itself
proper.
\end{remark}

\begin{remark} 
One can invoke the cobordism hypothesis with singularities to endow
the morphism $\dim(F):\dim(X)\to \dim(Y)$ with a canonical $S^1$-equivariant structure, and it will agree with the canonical geometric 
$S^1$-equivariant structure on the map $\cL f:\cL X\to \cL Y$  under the identification of the proposition.
\end{remark}


\subsection{Geometric functoriality of trace}

Consider a proper morphism $f:X\to Y$ and endomorphisms 
 $F_Z:X\to X$ and $F_W:Y\to Y$ in $Corr_k$ given by respective self-correspondences 
 $X\leftarrow Z \to X$ and $Y \leftarrow W \to Y$. 

  By an $f$-morphism from the pair $(X,F_Z)$ to the pair $(Y,F_W)$, we mean an
  identification 
  $$
  \xymatrix{
  s:Z  \ar[r]^-\sim &  X\times_Y W
  }$$ 
  of correspondences from $X$ to $Y$. This in turn
  induces an identification of what might be called relative traces
  $$
  \xymatrix{
  Z\times_{Y\times Y} Y\ar[r]^-\sim &  X\times_{Y\times Y} W
  }
  $$ 
  generalizing the relative loop space $\cL_X Y$ from the case of the identity correspondences $Z=X$, $W=Y$.
 We thus obtain a map of traces
$$
\xymatrix{
\tau(f, s):Z|_{\Delta_X}  =   Z\times_{X\times X} X \ar[r] &   Z \times_{Y\times Y} Y \ar[r]^-\sim &  X \times_{Y\times Y} W \ar[r] &  
  Y\times_{Y\times Y} W = W|_{\Delta_Y}
}
$$

\begin{prop}\label{geometric traces} 
With the preceding setup, the trace map $\Tr(f,s):\Tr(F_Z)\to\Tr(F_W)$ is
canonically identified with the geometric map 
$$
\xymatrix{
\tau(f, s): Z|_{\Delta_X} \ar[r] & W|_{\Delta_Y}
}$$
\end{prop}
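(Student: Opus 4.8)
The plan is to mirror the proof of Proposition~\ref{geometric dims}, now carrying along the self-correspondences $F_Z$ and $F_W$ and inserting the commuting structure $\psi = s$. Recall from Definition~\ref{functorial trace} that $\Tr(f,s)$ is by definition the five-term composite
\[
\Tr(F_Z)\xrightarrow{\eta_F}\Tr(F^r F F_Z)\xrightarrow{s}\Tr(F^r F_W F)\xrightarrow{m(F^r,\,F_W F)}\Tr(F_W F F^r)\xrightarrow{\epsilon_F}\Tr(F_W),
\]
where $F:X\to Y$ is the graph correspondence of $f$ and $F^r:Y\to X$ its reverse. First I would reuse the adjunction data already constructed geometrically: the unit $\eta_F:\id_X\to F^r F\simeq X\times_Y X$ is the relative diagonal, and the counit $\epsilon_F: FF^r\to\id_Y$ is $f$ itself. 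The extra ingredient beyond the dimension case is only the identification step $s$, which is an equivalence and so contributes no new noninvertible content.

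The heart of the argument is to compute the four intermediate traces as explicit derived fiber products and to recognize each arrow geometrically. I would first record the composites $F F_Z\simeq Z$ and $F_W F\simeq X\times_Y W$ as correspondences from $X$ to $Y$, so that the commuting structure is exactly the given $s:Z\xrightarrow{\sim} X\times_Y W$. Restricting the resulting endomorphisms to the diagonal and simplifying the iterated fiber products, I expect
\[
\Tr(F^r F F_Z)\simeq Z\times_{Y\times Y} Y, \qquad \Tr(F^r F_W F)\simeq X\times_{Y\times Y} W\simeq \Tr(F_W F F^r).
\]
Under these identifications the unit step becomes the enlargement $Z\times_{X\times X} X\to Z\times_{Y\times Y} Y$ of the diagonal induced by $f$ (exactly as in Proposition~\ref{geometric dims}); the $s$-step becomes the isomorphism $Z\times_{Y\times Y} Y\xrightarrow{\sim} X\times_{Y\times Y} W$ induced by $s$; the cyclic rotation $m(F^r,F_W F)$ becomes, by Lemma~\ref{geometric cyclic trace}, the canonical identification of the two descriptions of $X\times_{Y\times Y} W$; and the counit step becomes the $f$-induced map $X\times_{Y\times Y} W\to Y\times_{Y\times Y} W=W|_{\Delta_Y}$.

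Composing these four arrows reproduces, term by term, the three-step description of $\tau(f,s)$ given just before the statement: the unit supplies the first arrow $Z|_{\Delta_X}\to Z\times_{Y\times Y} Y$, the commuting structure supplies the middle isomorphism $Z\times_{Y\times Y} Y\simeq X\times_{Y\times Y} W$, the rotation $m$ collapses to the identity on $X\times_{Y\times Y} W$, and the counit supplies the final map to $W|_{\Delta_Y}$. I expect the \emph{main obstacle} to be the purely bookkeeping one of pinning down the derived fiber-product identifications above: in particular, checking that the $s$-step really yields the middle isomorphism of $\tau(f,s)$ and that the rotation $m$ collapses to the identity on $X\times_{Y\times Y} W$ rather than a nontrivial automorphism, and verifying that all of these are compatible with the rotational equivalence $\cL_Z X\simeq Z\times_X\cL X$ so that the composite is well defined (and, as in the remark following Proposition~\ref{geometric dims}, $S^1$-equivariant). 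Granting these compatibilities, which follow the same template as the dimension case with $s$ threaded through the middle, the abstract composite is precisely $\tau(f,s)$, as claimed.
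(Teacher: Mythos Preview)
Your proposal is correct and follows essentially the same approach as the paper: both compute the composite of Definition~\ref{functorial trace} term by term, identifying the unit with the relative-diagonal step, the counit with $f$, and invoking Lemma~\ref{geometric cyclic trace} for the rotation $m$. The only cosmetic difference is bookkeeping for the middle two steps: the paper keeps $\Tr(F^r F_W F)$ in the form $W\times_{Y\times Y} X$ (so that $m$ is the factor swap to $X\times_{Y\times Y} W$), whereas you absorb that swap into your identification $\Tr(F^r F_W F)\simeq X\times_{Y\times Y} W$ and then declare $m$ to be the identity---you correctly flag this as the point requiring care, and either convention yields $\tau(f,s)$.
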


\begin{proof}
Denote by $F:X\to Y$  the morphism given  by the graph $X\leftarrow \Gamma_f \to Y$, and by $F^r:Y\to X$ its right adjoint. 
We must calculate 
$$
\xymatrix{
\Tr(F_Z) \ar[r] & \Tr(F^r F F_Z) \ar[r]^-{s } &  \Tr(F^r F_W F)   \ar[rr]^-{m(F^r, F_W F)} && \Tr(F_W F F^r) \ar[r] & \Tr(F_W)
}$$

We have seen that the first and fourth morphisms correspond to the natural geometric maps
$$
\xymatrix{
Z|_{\Delta_X} = Z\times_{X \times X} X \ar[r] & Z  \times_{X \times X} (X\times_Y X)  
}
$$
$$\xymatrix{
X \times_{Y \times Y} W \ar[r] & Y \times_{Y \times Y} W = W|_{\Delta_Y}
}
$$
induced by the relative diagonal $X\to X\times_Y X$ and given map $f:X\to Y$ respectively.

Using associativity, the second map, induced by $s$, is  the natural geometric identification
$$
\xymatrix{
Z  \times_{X \times X} (X\times_Y X)  \simeq Z \times_{Y\times Y} Y\ar[r]^-\sim &  W \times_{Y \times Y} X 
}
$$
By Lemma~\ref{geometric cyclic trace}, the third map, given by the cyclic symmetry, is nothing more than the natural identification
$$
\xymatrix{
W \times_{Y \times Y} X  \ar[r]^-\sim &
X \times_{Y \times Y} W
}
$$

Thus assembling the above maps we arrive at the composition defining $\tau(f, s)$.
\end{proof}

\section{Traces for sheaves}\label{sect shvs}

In this section, we spell out how to apply the abstract formalism of traces of Section~\ref{sect cat}
and its geometric incarnation of Section~\ref{sect geom} to categories of sheaves. Recall Definition~\ref{sheaf def intro}:

\begin{defn} A {\em sheaf theory} is a symmetric monoidal functor of $(\infty,2)$-categories
$$
\xymatrix{
\ul{\cS}:\ul{Corr}_k^{prop}\ar[r] &  \ul{dgCat}_k
}
$$ 
from correspondences of stacks (with 2-morphisms given by proper maps of correspondences) to dg categories. We denote by 
$$
\xymatrix{
\cS:Corr_k\ar[r] &  dgCat_k
}
$$ 
the underlying 1-categorical sheaf theory, i.e. the symmetric monoidal functor on $(\infty,1)$-categories obtained by forgetting noninvertible morphisms.
\end{defn}

Applying a sheaf theory to the geometric 
descriptions of traces of correspondences,
 one immediately deduces trace formulas for dg categories. We first spell out the consequences of the 1-categorial structure of a sheaf theory $\cS$, then the trace formulae arising from its 2-categorical enhancement $\uS$, and finally in Section~\ref{actual sheaf theory} explain how to use the results of~\cite{finiteness,GR} to deduce applications of this formalism.
 
\subsection{Dimensions and traces of sheaf categories: 1-categorical consequences}
The graph of a map of derived stacks $f:X\to Y$ provides a correspondence from $X$ to $Y$ and a correspondence from $Y$ to $X$. We denote the respective induced maps by 
$f_{*}:\cS(X)\to \cS(Y)$ and $f^!:\cS(Y)\to \cS(X)$.
The functoriality of $\cS$ concisely encodes base change for $f_{*}$ and $f^!$.
 For $\pi:X\to pt = \Spec k$, we denote by  $\omega_{X}=\pi^! k\in \cS(X)$ the 
$\cS$-analogue of the dualizing sheaf, and by
$\omega(X)=\pi_{*}\omega_{X} \in \cS(pt) = dgVect_k$ the $\cS$-analogue of  ``global volume forms". 

Next we will record formal consequences of our prior calculations deduced from the fact
that a sheaf theory is symmetric monoidal.

\begin{prop}\label{basic integral transforms} Fix a sheaf theory $\cS:Corr_k\to dgCat_k$, and $X, Y\in Corr_k$.

(1) $\cS(X) \in dgCat_k$ is canonically self-dual, and for any $f:X\to Y$,  $f^!:\cS(Y) \to \cS(X)$ and  $f_{*}:\cS(X)\to \cS(Y)$ are canonically transposes of each other.

(2) $\cS(X)$ is canonically symmetric monoidal with tensor product
$$
\xymatrix{
\cF\ot^!\cG=\Delta^!(\pi_1^! \cF\ot \pi_2^! \cG) & \cF, \cG\in \cS(X)
}$$

(3)  For any $f:X\to Y$, the projection formula holds:
$$
\xymatrix{
f_*\cF\ot^! \cG \simeq f_*(\cF\ot^! f^!\cG) & \cF\in \cS(X), \cG\in \cS(Y)
}
$$

(4) There is a canonical equivalence of functors and integral kernels
$$
\Hom_{dgCat_k}(\cS(X),\cS(Y))\simeq \cS(X\times Y)
$$

(5) The functor $q_*p^!:\cS(X) \to \cS(Y)$ associated to a correspondence 
$$
\xymatrix{X&\ar[l]_-{p} Z \ar[r]^-{q} &Y}
$$
is represented by the integral kernel $(p\times q)_*\omega_Z \in \cS(X\times Y)$.
\end{prop}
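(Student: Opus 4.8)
The plan is to treat all five assertions as formal consequences of the single fact that $\cS$ is a symmetric monoidal functor, fed the geometric calculations of Section~\ref{sect geom}. A symmetric monoidal functor preserves dualizable objects together with their duals and transposes, and sends the monoidal product $X\times Y$ in $Corr_k$ to $\cS(X)\ot\cS(Y)$ in $dgCat_k$; crucially, every base change isomorphism is repackaged as mere functoriality of $\cS$. I would organize the argument so that (1), (2) and (4) are deduced purely formally, while (3) and (5) rest on one genuinely geometric input, namely base change along an explicit Cartesian square.

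For (1), Proposition~\ref{basic stack dimensions} shows that every $X$ is self-dual in $Corr_k$, with evaluation and coevaluation given by the diagonal correspondence, and that the transpose of a correspondence is the same correspondence read backwards. Applying $\cS$ transports these to a self-duality of $\cS(X)$, and since $f_*$ and $f^!$ are $\cS$ of the graph of $f$ and of its reverse, they become mutual transposes. For (2), I would use that the diagonal makes $X$ a commutative algebra object of $Corr_k$ (Remark~\ref{correspondences are bimodules}), so that $\cS(X)$ inherits a symmetric monoidal structure; unwinding the multiplication correspondence $X\times X\xleftarrow{\Delta} X\xrightarrow{\id} X$ under the convention of (5) gives $\Delta^!$ applied to the external product $\cF\boxtimes\cG=\pi_1^!\cF\ot\pi_2^!\cG$, which is the stated formula. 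Part (4) is then the chain $\Hom_{dgCat_k}(\cS(X),\cS(Y))\simeq\cS(X)^\vee\ot\cS(Y)\simeq\cS(X)\ot\cS(Y)\simeq\cS(X\times Y)$, using dualizability, the self-duality of (1), and the monoidality of $\cS$.

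The crux is (3), which I would produce as an instance of base change. The relevant square is the Cartesian diagram expressing $X\simeq(X\times Y)\times_{Y\times Y}Y$, with maps $\Gamma_f=(\id,f):X\to X\times Y$ and $f:X\to Y$ lying over $f\times\id:X\times Y\to Y\times Y$ and $\Delta_Y:Y\to Y\times Y$. Base change reads $\Delta_Y^!(f\times\id)_*\simeq f_*\,\Gamma_f^!$. Feeding in $f_*\cF\boxtimes\cG\simeq(f\times\id)_*(\cF\boxtimes\cG)$ (monoidality of $\cS$) and the identification $\Gamma_f^!(\cF\boxtimes\cG)\simeq\cF\ot^!f^!\cG$, which unwinds $\Gamma_f=(\id\times f)\circ\Delta_X$ together with the symmetric monoidality of $!$-pullback, one rewrites $f_*\cF\ot^!\cG=\Delta_Y^!(f_*\cF\boxtimes\cG)$ as $f_*(\cF\ot^!f^!\cG)$. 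I expect the main obstacle to be precisely the bookkeeping here: pinning down the correct Cartesian square and checking that the external products, the $!$-pullbacks, and the unit conventions all match, in particular that $!$-pullback is symmetric monoidal for $\ot^!$.

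Finally, for (5) I would compute the kernel of $q_*p^!$ directly from the self-duality of (1) rather than from an integral-transform formula. Under (4) the kernel of a functor $\Phi:\cS(X)\to\cS(Y)$ is $(\Phi\ot\id)(\eta_X)$, and by Proposition~\ref{basic stack dimensions} the coevaluation is $\eta_X=\Delta_{X,*}\omega_X\in\cS(X\times X)$. Applying $q_*p^!$ in the first factor, that is $(q\times\id)_*(p\times\id)^!$, and invoking base change for the Cartesian square $Z\simeq(Z\times X)\times_{X\times X}X$ converts $(p\times\id)^!\Delta_{X,*}\omega_X$ into $\Gamma_{p,*}\,p^!\omega_X$; since $p^!\omega_X=p^!\pi_X^!k=\pi_Z^!k=\omega_Z$, pushing forward yields $(q,p)_*\omega_Z$, which is $(p\times q)_*\omega_Z$ after the coordinate swap. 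This reuses exactly the base-change mechanism of (3), so once that step is secured the remaining identifications are routine.
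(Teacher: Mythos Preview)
Your argument for (1)--(4) matches the paper's essentially line for line: (1) from Proposition~\ref{basic stack dimensions}, (2) from the diagonal coalgebra structure, (3) from base change along exactly the Cartesian square you wrote down (the paper's vertical map is $\id\times f$, which is your $\Gamma_f$), and (4) from the chain $\Hom\simeq\cS(X)^\vee\ot\cS(Y)\simeq\cS(X)\ot\cS(Y)\simeq\cS(X\times Y)$.

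For (5) you take a genuinely different route. The paper first unwinds (4) to record that a kernel $K\in\cS(X\times Y)$ acts by $\cF\mapsto\pi_{2*}(\pi_1^!\cF\ot^! K)$, and then verifies that $q_*p^!$ matches this formula with $K=\Pi_*\omega_Z$ for $\Pi=p\times q$, via the projection formula (3): $q_*p^!\simeq\pi_{2*}\Pi_*\Pi^!\pi_1^!\simeq\pi_{2*}\Pi_*(\omega_Z\ot^!\Pi^!\pi_1^!(-))\simeq\pi_{2*}(\Pi_*\omega_Z\ot^!\pi_1^!(-))$. You instead read off the kernel directly as $(q_*p^!\ot\id)$ applied to the coevaluation $\Delta_{X,*}\omega_X$, and then use one more base change (for $Z\simeq(Z\times X)\times_{X\times X}X$) to identify the result with $(p\times q)_*\omega_Z$. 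Both are correct; the paper's version makes the integral-transform formula explicit and reuses (3) as the sole analytic input, while yours is more in the spirit of duality (kernel $=$ image of coevaluation) and avoids ever writing down $F_K$. The only minor wrinkle in your write-up is the factor-swap at the end, which reflects a convention choice in (4) about whether the kernel lives in $\cS(X\times Y)$ or $\cS(Y\times X)$; once that is fixed the swap disappears.
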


\begin{proof}
(1) Follows immediately from 
Proposition \ref{basic stack dimensions}.

%

(2) Follows immediately from 
 the commutative algebra structure
on $X\in Corr_k$ (in fact commutative coalgebra structure on $X\in Stacks_k$) provided by the diagonal map.

(3)  Follows from base change for the diagram 
$$\xymatrix{
X\ar[d]_-{id\times f} \ar[r]^-{f} & Y\ar[d]^-{\Delta}\\
X\times Y\ar[r]^-{f\times \id} & Y\times Y
}
$$

(4) Since $\cS$ is monoidal, we have 
$$
\cS(X)\ot \cS(Y)\simeq \cS(X \times Y)
$$
The self-duality of $\cS(X)$ provides 
$$\Hom_{dgCat_k}(\cS(X),\cS(Y))\simeq \cS(X)^\vee\ot \cS(Y) \simeq \cS(X)\ot \cS(Y)
$$
By construction, the composite identification 
assigns the functor
$$
\xymatrix{
F_K(\cF)=\pi_{2_*}(\pi_1^!\cF\ot^! K)
&
K\in \cS(X\times Y)
}$$

(5) Follows from the projection formula: consider the diagram
$$\xymatrix{
& Z \ar[dl]_-{p} \ar[dr]^-{q} \ar[d]_-{\Pi}& \\
X & \ar[l]_-{\pi_1} X\times Y \ar[r]^-{\pi_2} & Y
}
$$
where $\Pi=p\times q$. Then we have
$$\xymatrix{
q_*p^!(-)\simeq \pi_{2*} \Pi_* \Pi^! \pi_1^!(-) \simeq
\pi_{2*} \Pi_* (\omega_Z \otimes^! \Pi^! \pi_1^!(-)) \simeq
\pi_{2*} (\Pi_* \omega_Z \otimes^! \pi_1^!(-))
}
$$
\end{proof}

\begin{prop}\label{trace of integral transform}
Fix a sheaf theory $\cS:Corr_k\to dgCat_k$.

(1) The $\cS$-dimension $\dim(\cS(X)) =HH_*(\cS(X))$ of any $X\in Corr_k$  is $S^1$-equivariantly equivalent with $\cS$-global volume forms on
the loop space 
$$
\dim(\cS(X))\simeq \omega(\cL X).
$$

 In particular,  for $G$ an affine algebraic group, characters
of $\cS$-valued $G$-representations are adjoint-equivariant $\cS$-global volume forms
 $$\dim(\cS(BG))\simeq \omega(G/G)
 $$

(2)  The $\cS$-trace of any endomorphism $Z\in Corr_k(X,X)$ is equivalent to $\cS$-global volume forms
on the restriction to the diagonal
$$
\Tr(\cS(Z)) \simeq \omega(Z|_{\Delta})
$$

 In particular, the $\cS$-trace of a self-map $f:X\to X$  is equivalent to $\cS$-global volume forms on the $f$-fixed point locus
$$
\Tr(f_{*})\simeq \omega(X^f)
$$
\end{prop}

\begin{proof}

(1) Follows immediately from Proposition~\ref{basic stack dimensions}(1). To spell this out, using the previous proposition and base change,
$\dim (\cS(X))$ results from applying the composition
$$
\xymatrix{
\pi_*\Delta^!\Delta_*\pi^! \simeq \pi_*p_{2*}p_1^!\pi^! \simeq \cL \pi_* \cL \pi^!:dgVect_k \ar[r] & dgVect_k
}
$$
to the unit $1_{dgVect_k} = k$. Here $\pi:X\to pt$ and $\cL \pi:\cL x\to pt$ are the maps to the point, and $p_1, p_2:\cL X \simeq X\times_{X\times X} X\to X$ are the two natural projections. Thus we find $\dim (\cS(X)) \simeq \cL \pi_* \cL \pi^!(k) \simeq \omega(\cL X)$.
Furthermore, 
 the $S^1$-equivariance results  from the one-dimensional cobordism hypothesis: the one-dimensional topological field theory defined by the dualizable object $\cS(X)\in dgCat_k$ factors through that defined by the dualizable object $X\in Corr_k$. Moreover, we identified the $S^1$-action on the dimension $\cL X$  with loop rotation.\footnote{One can also check directly that the cyclic structure on the cyclic bar construction of the dg category $\cS(X)$ is induced by
 the cyclic structure of the loop space $\cL X$ under the identification $\omega(\cL X)\simeq \dim(\cS(X))$.}
 

(2) Similarly follows immediately from Proposition~\ref{basic stack dimensions}(2).
%
\end{proof}


\subsection{Integration formulas for traces: 2-categorical consequences}\label{2-cat}
Now we turn to the functoriality of dimensions and traces. For this we require the 2-categorical enhanced version $\uS$ of a sheaf theory, so as to take advantage of the resulting functorial adjunction
 $$\xymatrix{\cS(X) \ar@<+.5ex>[r]^{f_*} &
  \ar@<+.5ex>[l]^{f^!}\cS(Y)}$$ 
for $f:X\to Y$ proper. In particular, applying the functor $\uS$ on two-morphisms we find that

$\bullet$  A proper map $f:Z\to W$ of correspondences
$$
\xymatrix{
&\ar[dl]_-{p_Z} Z \ar[dd]^-f \ar[dr]^-{q_Z} & \\
X & & Y\\
& \ar[ul]^-{p_W} W\ar[ur]_-{q_W} & 
}
$$
induces a canonical integration morphism of integral transforms 
$$
\xymatrix{
\int_f: q_{Z*}p_Z^! \ar[r] & q_{W*} p_W^!
}$$

$\bullet$ In particular, when $X=Y=pt$, 
it induces a map of global volume forms
$$
\xymatrix{
\int_f: \omega(Z) \ar[r] & \omega(W)
}$$
 
$\bullet$ There is a canonical composition identity
 $$
 \xymatrix{
 \int_g\circ\int_{f}\simeq \int_{g\circ f}
 }
 $$
 
\begin{prop}\label{sheaf unit and counit} For $f:X\to Y$ proper, the unit and counit of the $(f_*,f^!)$ adjunction are given respectively by integration along
the proper maps of self-correspondences $\Delta_{/Y}:X\to X\times_Y X$ of $X$ and $f_{/Y}:X\to Y$ of $Y$:
$$
\xymatrix{
&\ar[dl]_-{} X \ar[dd]^-{\Delta_{f}} \ar[dr]^-{} & && &\ar[dl]_-{f} X \ar[dd]^-{f} \ar[dr]^-{f} & \\
X & & X  && Y & & Y\\
& \ar[ul]^-{p_1} X\times_Y X\ar[ur]_-{p_2} & && & \ar[ul]^-{} Y\ar[ur]_-{} & 
}
$$
\end{prop}

\begin{proof}
The assertion follows immediately from the geometric description of the unit and counit in the correspondence category, Proposition~\ref{geometric adjunction}, upon applying the functor $\uS$.
\end{proof}

\begin{prop}\label{dim via integration} 
For any proper map $f:X\to Y$, the induced map on dimensions 
$$
\xymatrix{
\dim(f_*):\dim(\cS(X))\ar[r] & \dim(\cS(Y))
}
$$
is identified ($S^1$-equivariantly) with integration along the loop map
$$
\xymatrix{
\dim(f_*)\simeq \int_{\cL f}:\omega(\cL X) \ar[r] & \omega(\cL Y)
}
$$
\end{prop}

\begin{proof}
%
According to Definition~\ref{functorial dim}, 
we must calculate the composition
$$
\xymatrix{
\dim(f_*):\Tr(\id_{\cS(X)})\ar[r]^-{\Tr(\eta_f)}& \Tr(f^!f_*) \ar[r]^-{\sim} & \Tr(f_*f^!) \ar[r]^-{\Tr(\epsilon_f)} & \Tr(\id_{\cS(Y)})
}
$$
The equivalence of the middle arrow is given by the canonical identifications 
$$
\Tr(f^!f_*)\simeq \omega((X\times_Y X) \times_{X \times X} X) \simeq \omega(X \times_{Y\times Y} Y) \simeq  \Tr(f_*f^!)
$$ 

%
%
%

%

By Proposition~\ref{sheaf unit and counit}, the unit $\eta_f:\id_{\cS(X)}\to f^! f_*$ is given by the integration morphism
$$
\xymatrix{\int_{\Delta_{f}}: \Delta_{f *}\omega_X \ar[r] & \omega_{X\times_Y X}
}
$$
and hence its trace $\Tr(\eta_f):\Tr(\id_{\cS(X)})\to \Tr(f^! f_*)$ is given by the induced integration map
$$
\xymatrix{
\int_{\Delta_f}:\omega(\cL X)\ar[r] & \omega((X\times_Y X) \times_{X \times X} X)
}
$$

Likewise, the counit $\epsilon_f:f_*f^!\to \id_{\cS(Y)}$ is given by by the integration morphism
$$
\xymatrix{
\int_f:f_*\omega_X\ar[r] & \omega_Y
}
$$  
and hence its trace $\Tr(\epsilon_f):\Tr(f_* f^!)\to Tr(\id_{\cS(Y)})$ is given by the induced integration map
$$
\xymatrix{
\int_{f}:\omega(X \times_{Y\times Y} Y )\ar[r] & \omega(\cL Y)
}
$$

Finally, by functoriality, their composition is given by the integration map 
$$
\xymatrix{
\int_{\cL f}:\omega(\cL X)\ar[r] & \omega(\cL Y)
}
$$
\end{proof}

Finally, we have the functoriality of traces in parallel with the previous theorem on the functoriality of  dimensions.
Let us recall the relevant setup. Consider a proper morphism $f:X\to Y$ and endomorphisms 
 $F_Z:X\to X$ and $F_W:Y\to Y$ in $Corr_k$ given by respective self-correspondences 
 $X\leftarrow Z \to X$ and $Y \leftarrow W \to Y$. 

  By an $f$-morphism from the pair $(X,F_Z)$ to the pair $(Y,F_W)$, we mean an
  identification 
  $$
  \xymatrix{
  s:Z  \ar[r]^-\sim &  X\times_Y W
  }$$ 
  of correspondences from $X$ to $Y$. This in turn
  induces an identification of what might be called relative traces
  $$
  \xymatrix{
  Z\times_{Y\times Y} Y\ar[r]^-\sim &  X\times_{Y\times Y} W
  }
  $$ 
  generalizing the relative loop space $\cL_X Y$ from the case of the identity correspondences $Z=X$, $W=Y$.
 We thus obtain a map of traces
$$
\xymatrix{
\tau(f, s):Z|_{\Delta_X}  =   Z\times_{X\times X} X \ar[r] &   Z \times_{Y\times Y} Y \ar[r]^-\sim &  X \times_{Y\times Y} W \ar[r] &  
  Y\times_{Y\times Y} W = W|_{\Delta_Y}
}
$$

\begin{prop} \label{trace via integration}
With the preceding setup, the trace map $\Tr(f_*,s):\Tr(F_{X*})\to\Tr(F_{Y*})$ is
canonically identified with the integration map 
$$
\xymatrix{
\int_{\tau(f, s)} : \omega( Z|_{\Delta_X}) \ar[r] & \omega(W|_{\Delta_Y} )
}$$
\end{prop}

\begin{proof}
The argument is parallel to the proof of Proposition \ref{dim via integration}. One calculates $\Tr(f_*,\alpha)$ from Definition~\ref{functorial trace} using Proposition~\ref{geometric traces}
and the compatibility of Proposition~\ref{basic integral transforms} and the integration morphism for integral transforms from Section~\ref{integration section}.
\end{proof}

\subsection{Ind-coherent sheaves and $\cD$-modules}\label{actual sheaf theory}

We now apply the results of Gaitsgory-Rozenblyum~\cite{GR} and Drinfeld-Gaitsgory~\cite{finiteness} establishing functoriality properties of categories of ind-coherent sheaves and $\cD$-modules.

We first state a fundamental 
result of Gaitsgory-Rozenblyum~\cite{GR}:

\medskip

\begin{thm}~\cite[Theorem III.3.5.4.3, III.3.6.3]{GR}  \label{GR sheaf theory}
There is a uniquely defined right-lax symmetric monoidal functor $\cQ^!$ from the $(\infty,2)$-category whose objects are {\em laft} (locally almost of finite type) prestacks, morphisms are correspondences with vertical arrow ind-inf-schematic, and 2-morphisms are ind-proper and ind-inf-schematic, to the $(\infty,2)$ category $\ul{dgCat}_k$ of $k$-linear presentable dg categories with continuous morphisms. The functor $\cQ^!$ is strictly symmetric monoidal on the full subcategory of {\em laft} ind-inf-schemes.
\end{thm}

The theorem encodes a tremendous amount of structure in great generality. Let us highlight some salient features useful in practice.
The theorem assigns a symmetric monoidal dg category $\cQ^!(X)$ to any stack satisfying a reasonable finite type assumption. 
The symmetric monoidal structure, the $!$-tensor product, is induced by $!$-pullback along diagonal maps. For an arbitrary morphism $p:X\to Y$ there is a continuous symmetric monoidal pullback functor $f^!:\cQ^!(Y)\to \cQ^!(X)$, while for $f$ schematic (or ind-schematic) there is a continuous pushforward $f_*:\cQ^!(X)\to \cQ^!(Y)$, which satisfies base change with respect to $!$-pullbacks. Moreover for $f$ proper (or ind-proper), $(f_*,f^!)$ form an adjoint pair. 

The theorem goes much further through the powerful formalism of {\em inf-schemes}: prototypical inf-schemes are quotients of schemes by infinitesimal equivalence relation. Thus one can treat on an equal footing ind-coherent sheaves that are equivariant for any formal groupoid. 
The most important example is the de Rham space $X_{dR}$ of a scheme, and one recovers $\cD$-modules on $X$ as ind-coherent sheaves on the de Rham functor of $X$, $\cD(X)=\cQ^!(X_{dR})$. Thus by first applying the functor $(-)_{dR}$ the theorem encodes the theory of $\cD$-modules, as a functor out of the correspondence 2-category of stacks (or {\em laft} prestacks) with pullbacks for arbitrary maps and pushforward for (ind-)schematic maps. 

\begin{corollary}
The functors $\cQ^!$ and $\cD$ define sheaf theories on {\em laft} ind-inf-schemes, i.e., define symmetric monoidal functors
$$\cQ^!,\cD: \uCorr(ind-inf-Sch_k)^{ind-prop}\longrightarrow \udg_k.$$
Thus the conclusions of Theorem~\ref{main} apply in this setting (in particular the Grothendieck-Riemann-Roch theorem for ind-proper maps
of ind-inf-schemes).
\end{corollary}

\subsubsection{The QCA setting.}\label{QCA section}
We are mostly interested in applications of sheaf theory on stacks, e.g. in an equivariant setting. That requires two features of the theories $\cQ^!$ and $\cD$ that are not encoded in Theorem~\ref{GR sheaf theory}.

Theorem~\ref{GR sheaf theory} produces in general a {\em right-lax} symmetric monoidal functor -- in other words, we have the natural map
$$\cQ^!(X)\ot\cQ^!(Y)\longrightarrow \cQ^!(X\times Y)$$ satisfying the expected coherences, but it is not an equivalence in general (though it is for schemes).

Also, while the theorem encodes arbitrary pullbacks, it does not encode a continuous pushforward functor $p_*:\cQ^!(X)\to \cQ^!(Y)$ for non-schematic morphisms (though a generalization to include QCA morphisms has been announced by the authors). This precludes an immediate application of our formalism to traces on stacks.

However, since the full structure of a sheaf theory is far stronger than is needed for the ``local" statements we discuss in this paper, we can get around this issue, by taking advantage of the following compilation of results of Drinfeld and Gaitsgory (specifically, see Section 3.6.1, Corollarys 3.7.14,  4.2.3, and 4.4.7, Proposition 4.4.11, Corollarys 8.3.4 and 8.4.3, Definition 9.3.2 and Proposition 9.3.12).

\begin{thm}~\cite{finiteness}\label{DG theorem}
\begin{enumerate}
\item For a QCA stack $X$, the categories $\cQ^!(X)$ and $\cD(X)$ are dualizable and canonically self-dual. 
\item The canonical functors define equivalences
$$\cQ^!(X)\otimes \cQ^!(Y)\simeq \cQ^!(X\times Y), \hskip.3in \cD(X)\otimes \cD(Y)\simeq \cD(X\times Y)$$
\item For a morphism $f:X\to Y$ of QCA stack, the ``renormalized pushforwards"\footnote{We note that for $\cQ^!$ the ``renormalized" pushforward is the standard pushforward functor, while for $\cD$ it differs for non-safe objects from the more familiar, but discontinuous, de Rham pushforward. } $$f_\bullet:\cQ^!(X)\to\cQ^!(Y), \hskip.3in f_\bullet:\cD(X)\to\cD(Y)$$ defined as the transpose of $f^!$ (by the self-duality of $(1)$) are continuous functors satisfying base-change and the projection formula with respect to pullback.
\end{enumerate}
\end{thm}

Let us now briefly indicate how the results of the previous two sections 
carry over to QCA stacks in the absence of a fully fledged sheaf theory, where we use the renormalized pushforward functors $f_\bullet$ provided by Theorem~\ref{DG theorem} to carry out non-representable pushforwards. In particular, for $X$ a general QCA stack, so that $pi_X:X\to pt$ is not representable, this means that the notation $\omega(X)$ has to be taken in a renormalized fashion,
$$\omega(X):=\pi_{X,\bullet}\pi_X^! k = \pi_{X,\bullet}\omega_X.$$
For the theory of ind-coherent sheaves this produces the usual notion of derived global sections of the dualizing complex, but for the theory of $\cD$-modules this will differ in general from the nonrenormalized version, namely Borel-Moore chains on $X$:
$$\omega(X)_{non-renorm}=R\Gamma_{dR}(\omega_X)=C_*^{BM}(X).$$

In Proposition~\ref{basic integral transforms}, the self-duality in assertion (1) for QCA stacks is the content of Theorem~\ref{DG theorem}(1), and $f_\bullet$ is defined so as to make it the transpose of $f^!$. Assertion (2) follows from the sheaf theory construction (i.e. is independent of non-representable morphisms), the projection formula is asserted in item (3) of the theorem and the last two assertions are deduced from the first three. Proposition~\ref{trace of integral transform} is also deduced directly from Proposition~\ref{basic integral transforms}.

The general trace construction $\int_f$ discussed in Section~\ref{2-cat} depends only on the functoriality of proper adjunction (which is part of the~\cite{GR} formalism) and the definition of pullback and (renormalized) pushforward. The identities needed to verify Propositions~\ref{sheaf unit and counit}, ~\ref{dim via integration} and~\ref{trace via integration} only depend on base change, which is guaranteed by Theorem~\ref{DG theorem}.
We therefore get as a payoff that the conclusions of Theorem~\ref{main} hold for QCA stacks, in particular:

\begin{thm} Let $\cS=\cQ^!$ or $\cS=\cD$ denote either ind-coherent sheaves or 
$\D$-modules. Let $f:X\to Y$ denote a proper morphism of QCA stacks.

$\bullet$  For 
any compact object $M\in \cS(X)$ (coherent sheaf or safe coherent $\cD$-module) with character $[M]\in HH_*(\cS(X))\simeq \omega(\cL X)$, 
there is a canonical identification
$$[f_*\cM]\simeq\int_{\cL f}[M]\in HH_*(\cS(Y))\simeq\omega(\cL Y)$$

\medskip

$\bullet$ Assume $Y=BG$ for an affine group and $X=Z/G$ for $Z$ a proper QCA stack.
Then for any compact object $M\in \cS(Z/G)$ ($G$-equivariant coherent sheaf or safely equivariant coherent $\cD$-module on $X$), and element $g\in G$,
there is a canonical identification
$$[f_*M]|_g \simeq \int_{\cL f} [M]|_{X^g}
$$ 

\medskip

$\bullet$ For a map $f:(X,Z)\to (Y,W)$ of QCA stacks with self-correspondences, the induced map $Tr(\cS(Z))\to Tr(\cS(W))$ is given 
by integration along fixed points $Z|_{\Delta_X}\to W_{\Delta_Y}$.

\end{thm}


\end{document}